\newtheorem{Theorem}{Theorem}[part]
\newtheorem{Definition}{Definition}[part]
\newtheorem{Proposition}{Proposition}[part]
\newtheorem{Assumption}{Assumption}[part]
\newtheorem{Lemma}{Lemma}[part]
\newtheorem{Corollary}{Corollary}[part]
\newtheorem{Remark}{Remark}[part]
\def \F{I\!\!F}
\def \H{I\!\!H}
\def \N{I\!\!N}
\def \R{I\!\!R}
\def\Fc{{\cal F}}
\def\L{{\cal L}}
\def\T{{\cal T}}
\def\Dzw1#1{\frac{\partial^2 #1}{\partial z \partial w_1}}
\def\Dzb1#1{\frac{\partial^2 #1}{\partial z \partial b_1}}
\newcommand{\fproof}{\hfill $\square$ \bigskip}
\newtheorem{definition}{Definition}[section]
\newtheorem{example}{Example}[section]
\newtheorem{theorem}[definition]{Theorem}
\newtheorem{Property}[definition]{Property}
\newtheorem{lemma}[definition]{Lemma}
\def\RB{\mathbb{R}}
\def\BC{\mathcal{B}}
\def\FC{\mathcal{F}}
\def\PC{\mathcal{P}}
\def\R{{\bf R}}
\def\1B{\text{1\!\!I}}
\def\tN{\tilde{N}}
\def\RB{\mathbb{R}}
\def\FC{\mathcal{F}}
\def\PC{\mathcal{P}}
\def\R{{\bf R}}
\def\1B{\text{1\!\!I}}
\def\tN{\tilde{N}}
\def\RB{\mathbb{R}}
\newcommand{\Q}{\mathbb{Q}}
\newcommand{\cf}{\mathcal{F}}
\newcommand{\ct}{\mathcal{T}}
\newcommand{\stopt}{\mathcal{T}_{t,T}}
\newcommand{\stops}{\mathcal{T}_{S,T}}
\newcommand{\stopo}{\mathcal{T}_{0,T}}
\DeclareMathOperator*{\esssup}{ess\,sup}
\DeclareMathOperator{\e}{e}
\newcommand{\dbarY}{\overline{\overline{Y}}}
\newcommand{\barY}{\overline{Y}}
\newcommand{\vertiii}[1]{{\left\vert\kern-0.25ex\left\vert\kern-0.25ex\left\vert #1 
    \right\vert\kern-0.25ex\right\vert\kern-0.25ex\right\vert}}
\newcommand{\vvertiii}[1]{{\vert\kern-0.25ex\vert\kern-0.25ex\vert #1 
    \vert\kern-0.25ex\vert\kern-0.25ex\vert}}
\newcommand{\triple}{\vert\kern-0.25ex\vert\kern-0.25ex\vert}
\newcommand{\BOX}{\ensuremath\Box}
\newenvironment{proof}{{\vskip\baselineskip\noindent\textbf{Proof:}}}%
{\hspace*{.1pt}\hspace*{\fill}\BOX\vskip\baselineskip}
\newcommand{\limsupn}{\limsup_{n\rightarrow\infty}}
\newcommand{\limup}{\lim_n\!\smash{\uparrow}}
\DeclareMathAlphabet{\mathpzc}{OT1}{pzc}{m}{it}
\begin{document}
\begin{frontmatter}


\title{Reflected BSDEs when the obstacle is not right-continuous and optimal stopping} 
\runtitle{RBSDEs with irregular obstacle and optimal stopping}

\begin{aug}

\author{\fnms{Miryana} \snm{Grigorova}\thanksref{m1}
\ead[label=e1]{miryana.grigorova@hu-berlin.de}},
\author{\fnms{Peter} \snm{Imkeller}\thanksref{m1}\ead[label=e2]{imkeller@math.hu-berlin.de}},
\author{\fnms{Elias} \snm{Offen}\thanksref{m2}\ead[label=e3]{offen@mopipi.ub.bw}},
\author{\fnms{Youssef} \snm{Ouknine}\thanksref{m3},
\ead[label=e4]{ouknine@ucam.ac.ma}
\ead[label=u1,url]{http://www.foo.com}}
\and
\author{\fnms{Marie-Claire} \snm{Quenez}\thanksref{m4}
\ead[label=e5]{quenez@math.univ-paris-diderot.fr}}
\runauthor{M. Grigorova et al.}
\affiliation{Humboldt University-Berlin \thanksmark{m1},  University of Botswana \thanksmark{m2}, Université Cadi Ayyad \thanksmark{m3}, and Université Paris-Diderot \thanksmark{m4} }
\address{Institut für Mathematik\\
Humboldt Universität zu Berlin\\
Unter den Linden 6\\
10099 Berlin, Germany\\
\printead{e1}\\
\phantom{E-mail:\ }\printead*{e2}\\}
\address{University of Botswana\\
 Private Bag UB 00704\\
 Gaborone, Botswana\\
\printead{e3}\\
}
\address{Département de Mathématiques\\
 Faculté des Sciences Semlalia\\
 Université Cadi Ayyad\\
 B.P. 2390\\
 Marrakech, Morocco\\
\printead{e4}\\}
\address{Laboratoire de Probabilités\\
 et Modèles Aléatoires\\
 Université Paris-Diderot\\
 Boîte courrier 7012\\
 75251 Paris Cedex 05, France\\
\printead{e5}\\
}

\end{aug}


\begin{abstract}
$\quad$In the first part of the paper, we study reflected backward stochastic differential equations (RBSDEs) with lower obstacle which is assumed to be  right upper-semicontinuous but not necessarily right-continuous. We prove existence and uniqueness of the solutions to such RBSDEs in appropriate Banach spaces. The result is established by using
some results from optimal stopping theory, some tools from the general theory of processes such as Mertens decomposition of optional strong supermartingales,  as well as an appropriate generalization of Itô's formula due to Gal'chouk and Lenglart. In the second part of the paper, we provide some links between the RBSDE studied in the first part and an optimal stopping problem in which the risk of a financial position $\xi$ is assessed by an $f$-conditional expectation $\mathcal{E}^f(\cdot)$ (where $f$ is a Lipschitz driver). We characterize the "value function" of the problem in terms of the solution to our RBSDE. Under an additional assumption of left upper-semicontinuity along stopping times on $\xi$, we show the existence of an optimal stopping time. We also provide a generalization of Mertens decomposition to the case of strong $\mathcal{E}^f$-supermartingales.   
\end{abstract}

\begin{keyword}[class=MSC]
\kwd[Primary ]{60G40, 93E20, 60H30}
\kwd[; secondary ]{60G07, 47N10}
\end{keyword}

\begin{keyword}
\kwd{backward stochastic differential equation, reflected backward stochastic differential equation, optimal stopping, $f$-expectation, strong optional supermartingale, Mertens decomposition, dynamic risk measure, strong $\mathcal{E}^f$-supermartingale}
\end{keyword}


\end{frontmatter}

\section{Introduction}\label{sec1}


Backward stochastic differential equations (BSDEs) have been introduced by Bismut (\cite{Bismut2}) in the case of a linear driver. The general theory of existence and uniqueness of solutions to BSDEs has been developed by Pardoux and Peng \cite{Pape90}. Through a result of Feynman-Kac-type, these authors have linked the theory of BSDEs to that of quasilinear parabolic partial differential equations (cf.\cite{Pape92}). BSDEs have found number of applications in finance, among which  pricing  and hedging of European options and recursive utilities (cf., for instance, \cite{KPQ}, \cite{EQ96}). Also, a useful family of operators, the family of so-called $f$-conditional expectations, has been defined through the notion of BSDEs and used in the literature on dynamic risk measures (cf., for instance, \cite{BEK}, \cite{Pe04}, \cite{QuenSul}, \cite{Gianin}, \cite{Bayraktar-2} 
among others).  
We recall that the $f$-conditional expectation at time $t\in[0,T]$ (where $T>0$ is a fixed final horizon) is  the operator which maps  a given square-integrable terminal condition $\xi_T$ to the position at time $t$ of (the first component of) the solution to the BSDE with parameters $(f,\xi_T)$. The operator is denoted $\mathcal{E}^f_{t,T}(\cdot).$ \\ 
Reflected backward stochastic differential equations (RBSDEs) can be seen as a variant of BSDEs in which the (first component of the) solution is constrained to remain greater than or equal to a given process called the obstacle. Compared to the case of (non-reflected) BSDEs, there is an additional nondecreasing predictable process which keeps the (first component of the) solution above the obstacle. 

RBSDEs have been introduced  by El Karoui et al. \cite{ElKaroui97} in the case of a Brownian filtration and a continuous obstacle. 
 In \cite{EQ96}, El Karoui and Quenez also study their links with the (non linear) pricing of American options. There have been several extensions of these works to the case of a discontinuous obstacle and/or a larger stochastic basis than the Brownian one  (cf. \cite{Ham}, \cite{CM}, \cite{HO1}, \cite{Essaky}, \cite{HO2}, \cite{QuenSul2}). In all these extensions an assumption of right-continuity on  the obstacle is made. 

In the first part of the present paper we consider a further extension of the theory of RBSDEs to the case where the obstacle is not necessarily right-continuous. Compared to the right-continuous case, the additional nondecreasing process, which "pushes" the (first component of the) solution to stay above the obstacle, is no longer right-continuous.  
To prove our results we use some tools from 
the optimal stopping theory (cf.  \cite{Maingueneau},  \cite{EK}, \cite{KS2}, \cite{Kob}), some tools from the general theory of processes (cf. \cite{DM2}) such as {\em Mertens decomposition} of strong optional (but not necessarily right-continuous)  supermartingales (generalizing Doob-Meyer decomposition), a result from the potential theory (cf. \cite{DM2}), and a generalization of Itô's formula
 to the case of {\em strong optional semimartingales} in the vocabulary of \cite{Galchouk} (but not necessarily right-continuous)  due to Gal'chouk and Lenglart (cf. \cite{Lenglart}).

In the second part of the paper, we make some links between the RBSDEs studied in the first part and optimal stopping with $f$-conditional expectations. More precisely, we are interested in the following optimization problem: we are given a process $\xi$ modelling a dynamic financial position. The risk of $\xi$ is assessed by a dynamic risk measure which (up to a minus sign) is given by  an $f$-conditional expectation. The process $\xi$ is assumed to be right upper-semicontinuous, but not necessarily right-continuous. We aim at stopping the process $\xi$ in such a way that the risk be minimal. 
We characterize the value of the problem in terms of the unique solution to the RBSDE associated with obstacle $\xi$ and driver $f$ studied in the first part. We show the existence of an optimal stopping time for the problem under an additional assumption of left upper-semicontinuity along stopping times on $\xi$, and the existence of an $\varepsilon$-optimal stopping time in the more general case where this assumption is not made.  We provide an optimality criterion characterizing the optimal stopping times for the problem in terms of properties of the "value process". We thus extend some results of  \cite{QuenSul2} to the case where the optimized process $\xi$ is not cadlag. 
We also establish  a comparison principle for the RBSDEs studied in the first part of our paper, as well as a generalization of Mertens decomposition to the case of  $\mathcal{E}^f$-strong supermartingales.

The remainder of the paper is organized as follows: 

In Section \ref{sec2} we give some preliminary definitions and properties.  In Section \ref{sec3} we define our RBSDE and we prove existence and uniqueness of the solution. 
Section \ref{sec-charact} is dedicated to our optimal stopping problem with $f$-conditional expectations. In Subsection \ref{sec-charact_subsec_1}  we formulate and motivate the problem. In Subsection \ref{sec-charact_subsec_2} we characterize the value function of the problem in terms of the solution of the RBSDE studied in Section \ref{sec3}; we also give an optimality criterion and address the question of existence of $\varepsilon$-optimal  and optimal stopping times. In Section \ref{sec_add} we derive some useful additional results: comparison principle for our RBSDEs (Subsection \ref{subsec_comp}) and "generalized" Mertens decomposition for $\mathcal{E}^f$-strong supermartingales (Subsection \ref{subsec_Mertens}). In Section \ref{sec_extension} we briefly present some further extensions of our work. 
In the Appendix we recall some useful results ("classical" Mertens decomposition, a result from  potential theory, Gal'chouk-Lenglart's change of variables formula), we give the proofs of three results (Prop. \ref{Prop_Banach_space}, Prop. \ref{pro}, and Prop. \ref{compref}) used in the main part of the paper, and we  also give some examples. 


\section{Preliminaries} \label{sec2}
Let $T>0$ be a fixed positive real number. Let $(E, \mathscr{E})$ be a measurable space 
equipped with a $\sigma$-finite positive measure $\nu$.
Let $(\Omega,  \Fc, P)$ be a probability space equipped with a  one-dimensional Brownian motion $W$ and with an independent  Poisson random measure $N(dt,de)$ with compensator $dt\otimes\nu(de)$. 
We denote by $\tilde N(dt,de)$ the compensated process, i.e. $\tilde N(dt,de):= N(dt,de)-dt\otimes\nu(de).$
Let $\F = \{\Fc_t \colon t\in[0,T]\}$ 
be  the (complete) natural filtration associated with $W$ and $N$. For $t\in [0,T],$ we denote by $\stopt$ the set of stopping times $\tau$ such that $P(t \leq\tau\leq T)=1.$ More generally, for a given stopping time $\nu\in \stopo$, we denote by $\ct_{\nu,T}$ the set of stopping times $ \tau$ such that $P(\nu \leq\tau\leq T)=1.$   \\





 We use the following notation: 
\begin{itemize}
\item ${\cal P}$ is  the predictable $\sigma$-algebra
on $ \Omega\times [0,T]$.
\item $\mathrm{Prog}$ is the progressive $\sigma$-algebra
on $ \Omega\times [0,T]$.
\item ${\cal B}(\R)$ (resp. ${\cal B}(\R^2)$)   is the Borel 
$\sigma$-algebra on $\R$ (resp. $\R^2$). 
\item 
 $L^2({\cal F}_T)$  is the set of random variables which are  $\Fc 
_T$-measurable and square-integrable.
\item $L^2_\nu$ is the set of $(\mathscr{E}, \mathcal{B}(\R))$-measurable functions $\ell:  E \rightarrow \R$ such that  $\|\ell\|_\nu^2:= \int_{ E}  |\ell(e) |^2 \nu(de) <  \infty.$
For $\ell\in\L^2_\nu$, $\mathpzc{k}\in\L^2_\nu$, we define $\langle \ell, \mathpzc{k}\rangle_\nu:=\int_E \ell(e)\mathpzc{k} (e) \nu(de)$.
\item ${\cal B}(L^2_\nu ) $ is the Borel 
$\sigma$-algebra on $L^2_\nu $.
\item    $\H^{2}$ is the set of 
$\R$-valued predictable processes $\phi$ with
 $\| \phi\|^2_{\H^{2}} := E \left[\int_0 ^T |\phi_t| ^2 dt\right] < \infty.$ 

\item $\H_{\nu}^{2}$ is  the set of $\R$-valued processes $l: (\omega,t,e)\in(\Omega\times[0,T] \times  E)\mapsto l_t(\omega, e)$ which are {\em predictable}, that is $(\PC \otimes {\mathscr{E}},\BC(\R))$-measurable,
and such that $\| l \|^2_{\H_{\nu}^{2}} :=E\left[ \int_0 ^T \|l_t\|_{\nu}^2 \,dt   \right]< \infty.$




\end {itemize}
We  introduce the vector space $ {\cal S}^{2}$ defined as the space of $\R$-valued optional 
(not necessarily cadlag)
 processes $\phi$ such that
$\vertiii{\phi}^2_{{\cal S}^{2}} := E[\esssup_{\tau\in\stopo} |\phi_\tau |^2] <  \infty.$ \footnote{Note that when $\phi$ is right-continuous, $\vertiii{\phi}^2_{{\cal S}^{2}}= E[\sup_{t\in [0,T]} |\phi_t |^2]$ 
(cf. Remark \ref{esssup_cadlag_process}). }

\begin{Proposition}\label{Prop_Banach_space}
The map 
$\vertiii{\cdot}_{{\cal S}^{2}}$
 is a norm on $\mathcal{S}^2$. 
In particular, if $\phi \in {\cal S}^{2}$ is such that $\vertiii{\phi}^2_{{\cal S}^{2}}=0$, then $\phi$ is indistinguishable
 from the null process, that is  $\phi_t =0$, $0\leq t \leq T$ a.s. Moreover, the space $\mathcal{S}^2$ endowed with the norm $\vertiii{\cdot}_{{\cal S}^{2}}$ is a Banach space.
\end{Proposition}
The proof is given in the Appendix.

%

\noindent
We will also use the following notation:\\
Let $\beta>0$. For $\phi\in\H^2$, $\|\phi\|_\beta^2:= E[\int_0^T \e^{\beta s} \phi_s^2 ds ].$ 
We note that  on the space $\H^2$  the norms $\|\cdot\|_\beta$ and $\|\cdot\|_{\H^2}$ are equivalent.
For $l\in\H^2_\nu$, $\|l\|_{\nu,\beta}^2:=E[\int_0^T \e^{\beta s} \|l_s\|_\nu^2  ds].$ 
On the space $\H^2_\nu$  the norms $\|\cdot\|_{\nu,\beta}$ and $\|\cdot\|_{\H^2_\nu}$ are equivalent.
For $\phi\in{\cal S}^2$, we define $\vertiii{\phi}^2_\beta:=E[\esssup_{\tau \in \stopo}\e^{\beta \tau}\phi_\tau^2]$. We note that $\vertiii{\cdot}_\beta$ is a norm on ${\cal S}^{2}$  equivalent to the norm $\vertiii{\cdot}_{{\cal S}^{2}}$.   

\begin{Remark}By a slight abuse of notation, we shall also write $\| \phi\|^2_{\H^{2}}$ (resp. $ \|\phi\|^2_\beta$) for  $E \left[\int_0 ^T |\phi_t| ^2 dt\right]$ (resp. $E \left[\int_0 ^T \e^{\beta t}|\phi_t| ^2 dt\right]$) in the case of a progressively measurable 
real-valued process $\phi$ (cf., e.g., Remark 2.1 in \cite{CM} for the same notation). 
\end{Remark}

\begin{definition}[Driver, Lipschitz driver]\label{defd}
A function $f$ is said to be a {\em driver} if 
\begin{itemize}
\item  
$f: \Omega  \times [0,T]  \times \R^2 \times L^2_\nu \rightarrow \R $\\
$(\omega, t,y, z, \mathpzc{k}) \mapsto  f(\omega, t,y, z, \mathpzc{k})  $
  is $ \mathrm{Prog} \otimes {\cal B}(\R^2)  \otimes {\cal B}(L^2_\nu) 
- $ measurable,  
\item $E[\int_0^T f(t,0,0,0)^2dt] < + \infty$.
\end{itemize} 
A driver $f$ is called a {\em Lipschitz driver} if moreover there exists a constant $ K \geq 0$ such that $dP \otimes dt$-a.e.\,, 
for each $(y_1, z_1, \mathpzc{k}_1)\in \R^2 \times L^2_\nu$, $(y_2, z_2, \mathpzc{k}_2)\in \R^2 \times L^2_\nu$, 
$$|f(\omega, t, y_1, z_1, \mathpzc{k}_1) - f(\omega, t, y_2, z_2, \mathpzc{k}_2)| \leq 
K (|y_1 - y_2| + |z_1 - z_2| +   \|\mathpzc{k}_1 - \mathpzc{k}_2 \|_\nu).$$
A Lipschitz driver  $f$ is called {\em predictable } if moreover $f$ is $ \mathcal{P}\otimes {\cal B}(\R^2)  \otimes {\cal B}(L^2_\nu) 
- $ measurable.
\end{definition}

For real-valued random variables $X$ and $X_n$, $n \in \N$, the notation   "$X_n\uparrow X$" will stand  for "the sequence $(X_n)$ is nondecreasing and converges to $X$ a.s.".  \\
For a ladlag process $\phi$, we denote by $\phi_{t+}$ and $\phi_{t-}$ the right-hand and left-hand limit of $\phi$ at  $t$. We denote by $\Delta_+ \phi_t:=\phi_{t_+}-\phi_t$ the size of the right jump of $\phi$ at $t$, and by   $\Delta \phi_t:=\phi_t-\phi_{t-}$ the size of the left jump of $\phi$ at $t$.

\begin{definition} \label{defr} 
%
 Let $\tau \in \stopo$. An optional process $(\phi_t)$ is said to be {\em right upper-semicontinuous (r.u.s.c.)  } (resp. {\em left upper-semicontinuous (l.u.s.c.)}) {\em along stopping times} at the stopping time $\tau$  if for all nonincreasing (resp. nondecreasing) sequence of stopping times $ (\tau_n)$ such that $\tau^n \downarrow \tau$ (resp. $\tau^n \uparrow \tau$)   a.s.\,,
$\phi_{\tau} \geq \limsup_{n\to \infty} \phi_{\tau_n} \;\; \rm{a.s.}$.  The process $(\phi_t)$ is said to be r.u.s.c. (resp. l.u.s.c.)   along stopping times if it is   r.u.s.c. (resp. l.u.s.c.)   along stopping times at each $\tau \in \stopo$. The right- (resp. left-) continuity property of  an optional process $(\phi_t)$ along stopping times at a stopping time $\tau$ is defined similarly.
\end{definition}


\begin{Remark}\label{Rmk_final_00}
 Note that if $(\phi_t)$ is an optional process and $\tau$ is a totally inaccessible stopping time, then   $(\phi_t)$ is  left-continuous along stopping times at $\tau$.
\\ 
If the process $( \phi_{t})$ has left limits, $( \phi_{t})$ is l.u.s.c. (resp. left-continuous) along stopping times if and only if for each predictable stopping time $\tau \in \stopo$,  $\phi_{\tau-} \leq \phi_{\tau}$ (resp. 
$\phi_{\tau-} = \phi_{\tau}$)
a.s. 


Note, moreover, that if an optional process $(\phi_t)$ is right upper-semicontinuous (r.u.s.c.), then it is r.u.s.c. along stopping times. The converse also holds true; it is a difficult result  of the general theory of processes proved in \cite[Prop. 2, page 300]{DelLen2}.
\end{Remark}

\section{Reflected BSDE whose obstacle is  not cadlag}\label{sec3}

Let $T>0$ be a fixed terminal time. Let $f$ be  a  driver. 
Let $\xi= (\xi_t)_{t\in[0,T]}$ be a left-limited  process in ${\cal S}^2$. 
We suppose moreover that the process $\xi$ is r.u.s.c. A process $\xi$ satisfying the previous properties will be called a  \emph{barrier}, or an  \emph{obstacle}.

\begin{Remark}\label{Rmk_left_uppersemicontinuous envelope}
Let us note that in the following definitions and results we can relax the assumption of existence of left limits for the obstacle $\xi$. All the results still hold true provided we replace the process  $(\xi_{t-})_{t\in]0,T]}$ by the  process $(\underline {\xi}_{t})_{t\in ]0,T]}$ defined by $\underline{\xi}_t:= \limsup_{s \uparrow t, s< t} \xi_s,$ for all $t\in]0,T].$ We recall that $\underline \xi$ is a predictable process (cf. \cite[Thm. 90, page 225]{DM1}). We call the process $\underline \xi$ the left upper-semicontinuous envelope of $\xi$.
\end{Remark}

\begin{definition} \label{def_solution_RBSDE}
A process $(Y,Z,k,A,C)$ is said to be a solution to the reflected BSDE with parameters $(f,\xi)$, where $f$ is a driver and $\xi$ is an obstacle, if 
\begin{align}\label{RBSDE}
&(Y,Z,k,A,C)\in {\cal S}^2 \times \H^2 \times \H^2_\nu \times {\cal S}^2\times {\cal S}^2  \text{and}  \text{ a.s.} \text{ for all } t\in[0,T] \nonumber \\
&Y_t=\xi_T+\int_{t}^T f(s,Y_s,  Z_s,k_s)ds-\int_{t}^T  Z_s dW_s-\int_{t}^T \int_E k_s(e) \tilde N(ds,de) +A_T-A_t+C_{T-} -C_{t-},\\
&  Y_t \geq \xi_t   \text{ for all } t\in[0,T]  \text{ a.s.,} \label{RBSDE_inequality_barrier}\\
& A \text{ is a nondecreasing right-continuous predictable process 
with } A_0= 0 \text{ and such that } \nonumber\\
& \int_0^T {\bf 1}_{\{Y_t > \xi_t\}} dA^c_t = 0 \text{ a.s. and } \; (Y_{\tau-}-\xi_{\tau-})(A^d_{\tau}-A^d_{\tau-})=0 \text{ a.s. for all predictable }\tau\in\stopo, \label{RBSDE_A}\\   
& C \text{ is a nondecreasing right-continuous adapted purely discontinuous process with } C_{0-}= 0  \nonumber\\
& \text{ and such that }
(Y_{\tau}-\xi_{\tau})(C_{\tau}-C_{\tau-})=0 \text{ a.s. for all }\tau\in\stopo. \label{RBSDE_C}
\end{align}
\end{definition}
Here  $A^c$ denotes the continuous part of the process $A$ and $A^d$  its discontinuous part.\\
Equations \eqref{RBSDE_A} and \eqref{RBSDE_C} are referred to as \emph{minimality conditions} or \emph{Skorokhod conditions}.  
We note that, by a classical result of the general theory of processes (\cite[Theorem IV.84]{DM1}), a process $(Y,Z,k,A,C)\in {\cal S}^2 \times \H^2 \times \H^2_\nu\times {\cal S}^2\times {\cal S}^2$ satisfies equation \eqref{RBSDE} in the above definition if and only if $Y_\tau=\xi_T+\int_{\tau}^T f(t,Y_t,  Z_t,k_t)dt-\int_{\tau}^T  Z_t dW_t-\int_{\tau}^T\int_E  k_t(e) \tilde N(dt,de) +A_T-A_\tau+C_{T-} -C_{\tau-}$, where the equality holds  $\text{ a.s. for all } \tau\in\stopo$.
Let us also emphasize that if  $(Y,Z,k,A,C)$ satisfies the above definition, then the process $Y$
has left and right limits.

\begin{Remark}\label{Rmk_the_jumps_of_C}
If $(Y,Z,k,A,C)$ is a solution to the RBSDE defined above, then $\Delta C_t(\omega)=Y_t(\omega)-Y_{t+}(\omega)$ for all $(\omega,t)\in\Omega\times[0,T)$ outside an evanescent set. This observation is a consequence of equation \eqref{RBSDE}.
It follows that $Y_t\geq Y_{t+}$, for all $t\in[0,T)$, which implies that $Y$ is necessarily r.u.s.c.

Moreover, since in our framework the filtration is quasi-left-continuous, martingales have only totally inaccessible jumps. Hence, if $\tau$ is a predictable stopping time,  we have  $Y_{\tau}-Y_{\tau-}= - (A_{\tau}-A_{\tau-})$ a.s.\, 
From this, together with Remark \ref{Rmk_final_00}, it follows that the process $Y$ is left-continuous along stopping times at a 
stopping time $\tau$ if and only 
if $\Delta A_{\tau}=0$ a.s.

We also note that equality 
\eqref{RBSDE} still holds with $f(t, Y_t, Z_t,k_t)$ replaced by $f(t, Y_{t-}, Z_t,k_t)$.   
Furthermore, the process $(Y_t +\int_0^t f(s, Y_s, Z_s,k_s) ds)_{t\in[0,T]}$ is a  strong supermartingale (cf. Definition \ref{Def_surmartingale_forte}).

\end{Remark}
%
\begin{Remark}[the particular case of a right-continuous obstacle]
In the particular case of  a right-continuous obstacle $\xi$, we have that $Y$ is right-continuous. Indeed, observe that  $Y_t\geq Y_{t+}\geq \xi_{t+}= \xi_{t}$ (due to the right upper semicontinuity of $Y$  and to inequality \eqref{RBSDE_inequality_barrier}). Hence, if $t$ is such that $Y_t=\xi_t$, then $Y_t=Y_{t+}=\xi_t.$ If $t$ is such that $Y_t>\xi_t$, then $Y_t-Y_{t+}=C_t-C_{t-}=0$ (due to Remark \ref{Rmk_the_jumps_of_C} and to \eqref{RBSDE_C}). Thus, in both cases,  $Y_t=Y_{t+}$; so, $Y$ is right-continuous. \\
Moreover, the right-continuity of $Y$ combined with Remark   \ref{Rmk_the_jumps_of_C} give $C_t=C_{t-}$, for all $t$.  As $C$ is right-continuous, purely discontinuous and such that $C_{0-}=0$, we deduce $C\equiv 0$. Thus, we recover the usual formulation of RBSDE with right-continuous obstacle. 
\end{Remark}




A simple introductory example where a solution to our RBSDE (from Definition \ref{RBSDE}) 
can be explicitly computed is presented in the Appendix (cf. Example \ref{exe_intro}).

Let us now  investigate the question of existence and uniqueness of the solution to the RBSDE defined above in the case where the driver $f$ does not depend on $y$, $z$, and $\mathpzc{k}$. 
To this purpose, we first state a lemma which will be used in the sequel.


\begin{lemma}[A priori estimates]\label{Lemma_estimate}
Let $(Y^1, Z^1, k^1, A^1, C^1)\in {\cal S}^2 \times \H^2 \times \H^2_\nu \times {\cal S}^2\times {\cal S}^2$ (resp. $(Y^2, Z^2, k^2, A^2, C^2)\in {\cal S}^2 \times \H^2 \times \H^2_\nu \times {\cal S}^2\times {\cal S}^2$) be  a  solution to the RBSDE  associated with  driver $f^1(\omega, t)$ (resp. $f^2(\omega, t)$) and with obstacle $\xi$.  There exists $c>0$ such that for all $\varepsilon>0$,  for all $\beta\geq\frac 1 {\varepsilon^2}$ we have 
\begin{eqnarray}\label{eq_initial_Lemma_estimate}
\|k^1-k^2\|^2_{\nu,\beta} \, \leq  \, \varepsilon^2  \|f^1-f^2\|^2_\beta\,;  &  &
\|Z^1-Z^2\|^2_\beta \, \leq \, \varepsilon^2  \|f^1-f^2\|^2_\beta\,;  \nonumber\\
  \quad \vvertiii{Y^1-Y^2}^2_\beta  &\leq& 4\varepsilon^2(1+6c^2)  \|f^1-f^2\|^2_\beta.
\end{eqnarray}

\end{lemma}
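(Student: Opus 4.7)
Let me set $\bar Y:=Y^1-Y^2$, $\bar Z:=Z^1-Z^2$, $\bar A:=A^1-A^2$, $\bar C:=C^1-C^2$ and $\bar f(s):=f^1(s)-f^2(s)$. Taking the difference of the two RBSDEs gives, for every $\tau\in\stopo$,
\[
\bar Y_\tau=\int_\tau^T\bar f(s)\,ds-\int_\tau^T\bar Z_s\,dW_s+(\bar A_T-\bar A_\tau)+(\bar C_{T-}-\bar C_{\tau-}),\qquad\bar Y_T=0,
\]
and a direct inspection of this identity shows that $\Delta\bar Y_s=-\Delta\bar A^d_s$ and $\Delta_+\bar Y_s=-\Delta\bar C_s$. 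The overall strategy is classical: apply the Gal'chouk--Lenglart change-of-variables formula (recalled in the Appendix) to the ladlag semimartingale $e^{\beta t}\bar Y_t^2$ on $[\tau,T]$, exploit the Skorokhod minimality conditions in order to discard the contributions coming from $\bar A$ and $\bar C$, close the bound with Young's inequality, and finally pass to $\esssup_\tau$ with the help of the Burkholder--Davis--Gundy inequality.

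The Gal'chouk--Lenglart expansion of $e^{\beta\tau}\bar Y_\tau^2$ produces an equality which, after substituting $d\bar Y^c=-\bar f\,ds+\bar Z\,dW-d\bar A^c$ and regrouping the continuous and the discontinuous parts of $\bar A$, rearranges as
\[
e^{\beta\tau}\bar Y_\tau^2+\int_\tau^T\beta e^{\beta s}\bar Y_s^2\,ds+\int_\tau^T e^{\beta s}\bar Z_s^2\,ds\leq 2\int_\tau^T e^{\beta s}\bar Y_{s-}\bar f(s)\,ds-2\int_\tau^T e^{\beta s}\bar Y_{s-}\bar Z_s\,dW_s+R,
\]
where I have dropped the non-negative squared-jump contributions $(\Delta\bar Y_s)^2$ and $(\Delta_+\bar Y_s)^2$ from the left-hand side, and
\[
R:=2\int_\tau^T e^{\beta s}\bar Y_{s-}\,d\bar A_s+2\int_\tau^T e^{\beta s}\bar Y_s\,d\bar C_s.
\]
The key point is $R\leq 0$. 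For the $\bar A$-part: on the support of $dA^{1,c}$, which does not charge the at most countable set of jump times of $Y^1$ and $\xi$, the condition $\int\1B_{\{Y^1>\xi\}}\,dA^{1,c}=0$ forces $Y^1_{s-}=\xi_{s-}$, $dA^{1,c}$-a.e.; at each predictable jump time of $A^{1,d}$, the condition $(Y^1_{s-}-\xi_{s-})\Delta A^{1,d}_s=0$ gives the same equality; combined with $Y^2_{s-}\geq\xi_{s-}$ (inherited from $Y^2\geq\xi$ by left limits), this yields $\int(Y^1_{s-}-Y^2_{s-})\,dA^1_s\leq 0$, and symmetrically $-\int(Y^1_{s-}-Y^2_{s-})\,dA^2_s\leq 0$. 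A parallel argument at the right-continuous jump times of $C^i$, using $(Y^i_s-\xi_s)\Delta C^i_s=0$, takes care of the $\bar C$-integral.

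Next, Young's inequality $2\bar Y_{s-}\bar f(s)\leq\beta\bar Y_s^2+\bar f(s)^2/\beta$, combined with $\beta\geq 1/\varepsilon^2$ (so that $1/\beta\leq\varepsilon^2$), absorbs the $\beta\bar Y^2$-term on the left and leaves
\[
e^{\beta\tau}\bar Y_\tau^2+\int_\tau^T e^{\beta s}\bar Z_s^2\,ds\leq\varepsilon^2\int_\tau^T e^{\beta s}\bar f(s)^2\,ds-2\int_\tau^T e^{\beta s}\bar Y_{s-}\bar Z_s\,dW_s.
\]
Taking $E[\cdot\,|\,\Fc_\tau]$ (the stochastic integral is a true martingale because $\bar Y\in{\cal S}^2$ and $\bar Z\in\H^2$), then setting $\tau=0$ and taking expectation, delivers the first bound $\|\bar Z\|_\beta^2\leq\varepsilon^2\|\bar f\|_\beta^2$. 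For the second bound, I take $\esssup_\tau$ of the previous inequality before expectation: the deterministic term is bounded by $\varepsilon^2\int_0^T e^{\beta s}\bar f(s)^2\,ds$, while $\esssup_\tau|\int_\tau^T e^{\beta s}\bar Y_{s-}\bar Z_s\,dW_s|\leq 2\sup_t|M_t|$ with $M_t:=\int_0^t e^{\beta s}\bar Y_{s-}\bar Z_s\,dW_s$. The Burkholder--Davis--Gundy inequality, together with the pointwise estimate $[M]_T\leq(\sup_s e^{\beta s}\bar Y_s^2)\cdot\int_0^T e^{\beta s}\bar Z_s^2\,ds$ and Cauchy--Schwarz, yields $E[\sup_t|M_t|]\leq c_0\vvertiii{\bar Y}_\beta\|\bar Z\|_\beta$ for some universal $c_0$; a final Young step allows me to absorb $\tfrac12\vvertiii{\bar Y}_\beta^2$ on the left and, using the already established $\bar Z$-estimate, produces $\vvertiii{\bar Y}_\beta^2\leq 2\varepsilon^2(1+2c^2)\|\bar f\|_\beta^2$ with $c$ depending only on $c_0$.

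The step demanding the most care is the Gal'chouk--Lenglart expansion in the ladlag setting: one must simultaneously keep track of the left jumps of $\bar Y$ (coming from $\bar A^d$) and of its right jumps (coming from $\bar C$), check that the first-order $d\bar A$- and $d\bar C$-terms regroup into the manageable expression $R$ above, and then verify $R\leq 0$ via the Skorokhod conditions, which must now be invoked at $s-$ for $A$ and at $s$ for $C$. The naive cadlag argument, which ignores $C$ altogether and uses only left limits, is insufficient precisely because the loss of right-continuity of $\xi$ is what forces $C$ to be non-trivial.
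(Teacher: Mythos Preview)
Your proof is correct and follows essentially the same route as the paper's: apply the Gal'chouk--Lenglart change-of-variables formula to $e^{\beta t}\bar Y_t^2$, discard the nonnegative squared-jump terms, show the $\bar A$- and $\bar C$-contributions are nonpositive via the Skorokhod minimality conditions (at $s-$ for $A$, at $s$ for $C$), absorb the driver term with Young's inequality using $\beta\geq 1/\varepsilon^2$, and finish the $Y$-estimate via BDG applied to the continuous local martingale $\int_0^\cdot e^{\beta s}\bar Y_{s-}\bar Z_s\,dW_s$ together with a second Young/absorption step. The only cosmetic differences are that the paper splits $\int_\tau^T=\int_0^T-\int_0^\tau$ before taking the essential supremum (rather than bounding $|\int_\tau^T|\leq 2\sup_t|M_t|$ directly) and spells out more carefully why the pathwise supremum of $e^{\beta s}\bar Y_{s-}^2$ is dominated by $\esssup_\tau e^{\beta\tau}\bar Y_\tau^2$; both points are routine.
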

\textbf{Proof of Lemma \ref{Lemma_estimate}:}
Let $\beta>0$ and  $\varepsilon>0$ be such that $\beta\geq\frac 1 {\varepsilon^2}$.
We  set $\tilde Y:=Y^1-Y^2$, $\tilde Z:=Z^1-Z^2$, $\tilde A:=A^1-A^2$, $\tilde C:=C^1-C^2$,  $\tilde k:=k^1-k^2$, and $\tilde f(\omega, t):=f^1(\omega, t)-f^2(\omega, t)$.
We note that $\tilde Y_T=\xi_T-\xi_T=0;$ moreover, 
$$\tilde Y_\tau=\int_{\tau}^T \tilde f(t)dt-\int_{\tau}^T  \tilde Z_t dW_t -
\int_{\tau}^T \int_{E} \tilde k_t(e) \tilde N(dt,de) +\tilde A_T-\tilde A_\tau+\tilde C_{T-} -\tilde C_{\tau-} \text{ a.s. for all }\tau\in\stopo.$$
%

Thus we see that $\tilde Y$ is an {\em optional strong  semimartingale} in the vocabulary of \cite{Galchouk} (cf. Theorem \ref{Thm_Ito} for the definition) 
%
 with decomposition 
$\tilde Y= \tilde Y_0+ M+A+B$, where
$M_t:=\int_0^t \tilde Z_s dW_s+\int_0^t \int_{E} \tilde k_s(e) \tilde N(ds,de) $, $A_t:= -\int_0^t  \tilde f(s) ds- \tilde A_t$ and 
$B_t:=-\tilde C_{t-}$ (the notation is that of Theorem \ref{Thm_Ito} and Corollary \ref{Cor_Ito} from the Appendix).
Applying Gal'chouk-Lenglart's formula (more precisely Corollary \ref{Cor_Ito}) to $\e^{\beta t}\tilde Y_t^2$ gives: almost surely, for all $t\in[0,T]$,
\begin{equation*}
\begin{aligned}
\e^{\beta T}\tilde Y_T^2&=\e^{\beta t}\tilde Y_t^2+\int_{]t,T]}\beta\e^{\beta s} (\tilde Y_{s})^2 ds
-2\int_{]t,T]} \e^{\beta s}\tilde Y_{s-}\tilde f(s) ds -2\int_{]t,T]} \e^{\beta s}\tilde Y_{s-}d\tilde A_s \\
&+2\int_{]t,T]} \e^{\beta s}\tilde Y_{s-}\tilde Z_s d W_s+
2\int_{]t,T]} \e^{\beta s}\int_E \tilde Y_{s-}\tilde k_s(e) \tilde N(ds,de) +\int_{]t,T]} \e^{\beta s} \tilde Z_s^2 ds\\
&+ \sum_{t<s\leq T}\e^{\beta s}(\tilde Y_s-\tilde Y_{s-})^2-
\int_{[t,T[} 2\e^{\beta s}\tilde Y_{s}d(\tilde C)_{s+}
+\sum_{t\leq s<T}\e^{\beta s}(\tilde Y_{s+}-\tilde Y_{s})^2.
\end{aligned}
\end{equation*}
Thus, we get (recall that $\tilde Y_T=0$): almost surely, for all $t\in[0,T]$,
\begin{equation}\label{eq0_lemma_estimate}
\begin{aligned}
\e^{\beta t}\tilde Y_t^2+ \int_{]t,T]} \e^{\beta s} \tilde Z_s^2 ds&=  -\int_{]t,T]}\beta\e^{\beta s} (\tilde Y_{s})^2 ds+
2\int_{]t,T]} \e^{\beta s}\tilde Y_{s}\tilde f(s) ds+2\int_{]t,T]} \e^{\beta s}\tilde Y_{s-}d\tilde A_s\\
&-2\int_{]t,T]} \e^{\beta s}\tilde Y_{s-}\tilde Z_s d W_s-
2\int_{]t,T]} \e^{\beta s}\int_E \tilde Y_{s-}\tilde k_s(e) \tilde N(ds,de)\\
& -\sum_{t<s\leq T}\e^{\beta s}(\tilde Y_s-\tilde Y_{s-})^2 +2\int_{[t,T[} \e^{\beta s}\tilde Y_{s}d\tilde C_{s}-\sum_{t\leq s<T}\e^{\beta s}(\tilde Y_{s+}-\tilde Y_{s})^2.
\end{aligned}
\end{equation}
We give hereafter an upper bound for some of the terms appearing on the right-hand side (r.h.s. for short) of the above equality.

Let us first consider the sum of \textbf{the first and the  second term } on the r.h.s. of equality  \eqref{eq0_lemma_estimate}. By applying the inequality $2ab\leq (\frac a \varepsilon)^2+\varepsilon^2 b^2$, valid for all $(a,b)\in\R^2$, we get: a.s. for all $t\in[0,T]$,
\begin{equation*}
\begin{aligned}
-\int_{]t,T]}\beta\e^{\beta s} (\tilde Y_{s})^2 ds+2\int_{]t,T]}  \e^{\beta s}\tilde Y_{s}\tilde f(s) ds&\leq -\int_{]t,T]}\beta\e^{\beta s} (\tilde Y_{s})^2 ds+\frac 1 {\varepsilon^2} \int_{]t,T]}  \e^{\beta s}\tilde Y_{s}^2ds\\
&\quad+\varepsilon^2 \int_{]t,T]}  \e^{\beta s}\tilde f^2(s) ds\\
&=(\frac 1 {\varepsilon^2}-\beta)\int_{]t,T]}\e^{\beta s} (\tilde Y_{s})^2 ds+\varepsilon^2 \int_{]t,T]}  \e^{\beta s}\tilde f^2(s) ds.
\end{aligned}
\end{equation*}
As $\beta\geq\frac 1 {\varepsilon^2}$, we have $(\frac 1 {\varepsilon^2}-\beta)\int_{]t,T]}\e^{\beta s} (\tilde Y_{s})^2 ds \leq 0$, for all $t\in[0,T]$ a.s. 

For \textbf{the third  term} on the r.h.s. of \eqref{eq0_lemma_estimate} it can be shown that, a.s. for all $t\in[0,T]$,  $2\int_{]t,T]} \e^{\beta s}\tilde Y_{s-}d\tilde A_s\leq 0$. The proof uses property \eqref{RBSDE_A} of the definition of the RBSDE and the property  $Y^i\geq \xi$, for $i=1,2$; the details are similar to those in the case of a cadlag obstacle and are left to the reader (cf., for instance,  \cite[proof of Prop. A.1]{QuenSul2}).

For the \textbf{the last but one term} on the r.h.s. of \eqref{eq0_lemma_estimate} we show that,  a.s. for all $t\in[0,T]$, $2\int_{[t,T[} \e^{\beta s}\tilde Y_{s}d\tilde C_{s}\leq 0.$ Indeed, a.s. for all $t\in[0,T]$,
$\int_{[t,T[} \e^{\beta s}\tilde Y_{s}d\tilde C_{s}=
\sum_{t\leq s<T} \e^{\beta s}\tilde Y_{s} \Delta \tilde C_s.$
Now, a.s. for all $s\in[0,T]$,
$\tilde Y_{s} \Delta \tilde C_s=(Y_s^1-Y_s^2)\Delta C_s^1-(Y_s^1-Y_s^2)\Delta C_s^2.$
We use property \eqref{RBSDE_C}, the non-decreasingness of (almost all trajectories of) $C^1$, and the fact that $Y^2\geq \xi$ to obtain: a.s. for all $s\in[0,T]$,
$$(Y_s^1-Y_s^2)\Delta C_s^1=(Y_s^1-\xi_s)\Delta C_s^1-
(Y_s^2-\xi_s)\Delta C_s^1= -(Y_s^2-\xi_s)\Delta C_s^1\leq 0.$$
Similarly, we obtain: a.s. for all $s\in[0,T]$,
$(Y_s^1-Y_s^2)\Delta C_s^2=(Y_s^1-\xi_s)\Delta C_s^2\geq 0.$
We conclude that, a.s. for all $t\in[0,T]$, $2\int_{[t,T[} \e^{\beta s}\tilde Y_{s}d\tilde C_{s}$ $\leq 0$. 
The above observations, together with equation \eqref{eq0_lemma_estimate}, lead to the following inequality: a.s., for all $t\in[0,T]$,  
\begin{equation}\label{eq7_lemma_estimate}
\begin{aligned}
\e^{\beta t}\tilde Y_t^2+\int_{]t,T]} \e^{\beta s} \tilde Z_s^2 ds &\leq \varepsilon^2 \int_{]t,T]}  \e^{\beta s}\tilde f^2(s) ds-
2\int_{]t,T]} \e^{\beta s}\tilde Y_{s-}\tilde Z_s d W_s\\
&-2\int_{]t,T]} \e^{\beta s}\int_E \tilde Y_{s-}\tilde k_s(e) \tilde N(ds,de)-\sum_{t<s\leq T}\e^{\beta s}(\tilde Y_s-\tilde Y_{s-})^2.
\end{aligned}
\end{equation}
From the above inequality we derive first an estimate for $\|\tilde Z\|^2_\beta$ and 
$\|\tilde k\|^2_{\nu,\beta}$, and then an estimate for $\vvertiii{\tilde Y}^2_\beta.$
\paragraph*{Estimate for $\|\tilde Z\|^2_\beta$ and 
$\|\tilde k\|^2_{\nu,\beta}$}
Note first that we have: 
\begin{equation*}
\begin{aligned}
 \int_{]t,T]} \e^{\beta s} ||\tilde k_s||_{\nu}^2 ds-\sum_{t<s\leq T}\e^{\beta s}(\tilde Y_s-\tilde Y_{s-})^2
& = \int_{]t,T]} \e^{\beta s} ||\tilde k_s||_{\nu}^2 ds- \int_{]t,T]} \e^{\beta s}\int_E \tilde k_s^2(e)  N(ds,de)\\
&\quad-\sum_{t< s\leq T}\e^{\beta s}(\Delta \tilde A_s) ^2\\
&=-\sum_{t\leq s<T}\e^{\beta s}\Delta \tilde A_s ^2-\int_{]t,T]} \e^{\beta s}\int_E \tilde k_s^2(e)  \tilde N(ds,de),
\end{aligned}
\end{equation*}
where, in order to obtain the first equality, we have used the fact that the processes $A_\cdot$ and $N(\cdot,de)$ "do not have jumps in common" (recall that the process $A$ jumps only at predictable stopping times, while the process $N(\cdot,de)$ does not jump at predictable stopping times).\\ 
By adding the term $\int_{]t,T]} \e^{\beta s}|| \tilde k_s||_{\nu}^2 ds$ on both sides of inequality \eqref{eq7_lemma_estimate} and by using the above computation, we derive that almost surely, for all $t\in[0,T]$, 
\begin{equation}\label{eq_last_000}
\begin{aligned}
\e^{\beta t}\tilde Y_t^2+\int_{]t,T]} \e^{\beta s} \tilde Z_s^2 ds +\int_{]t,T]} \e^{\beta s}|| \tilde k_s||_{\nu}^2 ds
&\leq \varepsilon^2 \int_{]t,T]}  \e^{\beta s}\tilde f^2(s) ds-
2\int_{]t,T]} \e^{\beta s}\tilde Y_{s-}\tilde Z_s d W_s\\
& -\int_{]t,T]} \e^{\beta s}\int_E ( 2\tilde Y_{s-}\tilde k_s(e)+\tilde k_s^2(e))  \tilde N(ds,de).
\end{aligned}
\end{equation}
Let us show that the stochastic integral "with respect to $dW_s$" has  zero expectation. Note first that  
\begin{equation}\label{inegalite-essup}
\sup_{t\in ]0,T]}  \tilde{Y}_{t-}^2= \sup_{t\in\Q \cap ]0,T]} \tilde Y_{t-}^2\leq \esssup_{\tau\in\stopo} 
\tilde Y_{\tau}^2 \quad {\rm a.s}
\end{equation}
 where we have used the left-continuity of the process $(\tilde {Y}_{s-})$ to obtain the  equality. 
 %
 %
 %
From this property together with Cauchy-Schwarz inequality, we get   $E\left[ \sqrt{\int_{0}^T \e^{2\beta s}\tilde Y^2_{s-}\tilde Z^2_s ds}\right] 
 \leq  \vvertiii{\tilde Y}_{{\cal S}^2} \|\tilde Z\|_{2\beta}<\infty.$ By standard arguments, we deduce 
$E\left[ \int_{0}^T \e^{\beta s}\tilde Y_{s-}\tilde Z_s d W_s\right] =0.$  By similar arguments, the last term on the r.h.s. of  inequality \eqref{eq_last_000}  has also zero expectation. 

By applying \eqref{eq_last_000} with $t=0$, and by taking expectations on both sides of the resulting inequality, we obtain
$\tilde Y_0^2+ \|\tilde Z\|^2_\beta+\|\tilde k\|^2_{\nu,\beta}\leq \varepsilon^2\|\tilde f\|^2_\beta.$          
We deduce the desired estimates:
\begin{equation}\label{eq8_lemma_estimate} 
\|\tilde Z\|^2_\beta\leq \varepsilon^2\|\tilde f\|^2_\beta \text{  and  } \|\tilde k\|^2_{\nu,\beta}\leq \varepsilon^2\|\tilde f\|^2_\beta.          
\end{equation}

\paragraph*{Estimate for $\vvertiii{\tilde Y}^2_\beta$}
%
From inequality \eqref{eq7_lemma_estimate} we derive that a.s., for all $t\in[0,T]$,
\begin{equation}\label{eq7a_lemma_estimate}
\e^{\beta t}\tilde Y_t^2 \leq \varepsilon^2 \int_{]t,T]}  \e^{\beta s}\tilde f^2(s) ds-
2\int_{]t,T]} \e^{\beta s}\tilde Y_{s-}\tilde Z_s d W_s-2\int_{]t,T]} \e^{\beta s}\int_E \tilde Y_{s-}\tilde k_s(e) \tilde N(ds,de).
\end{equation}
From this, together with Chasles' relation for  stochastic integrals, we get, for all $\tau\in\stopo$, 
\begin{equation*}
\begin{aligned}
\e^{\beta \tau}\tilde Y_\tau^2 &\leq \varepsilon^2 \int_{]0,T]}  \e^{\beta s}\tilde f^2(s) ds 
-2\int_{]0,T]} \e^{\beta s}\tilde Y_{s-}\tilde Z_s d W_s+2\int_{]0,\tau]} \e^{\beta s}\tilde Y_{s-}\tilde Z_s d W_s\\
&-2\int_{]0,T]} \e^{\beta s}\int_E \tilde Y_{s-}\tilde k_s(e) \tilde N(ds,de)+2\int_{]0,\tau]} \e^{\beta s}
\int_E \tilde Y_{s-}\tilde k_s(e) \tilde N(ds,de) \text{ a.s.}
\end{aligned}
\end{equation*}

%
%
By taking first the essential supremum over $\tau\in\stopo$ and then the expectation on both sides of the above inequality, we obtain   
\begin{equation}\label{eq9_lemma_estimate} 
E[\esssup_{\tau \in \stopo}\e^{\beta \tau}\tilde Y_\tau^2]\leq \varepsilon^2 \|\tilde f\|^2_\beta  
+2E[\esssup_{\tau \in \stopo}|\int_0^\tau \e^{\beta s}\tilde Y_{s-}\tilde Z_s d W_s|]+
2E[\esssup_{\tau \in \stopo}|\int_{]0,\tau]} \e^{\beta s}\int_E \tilde Y_{s-}\tilde k_s(e) \tilde N(ds,de)|].
\end{equation}
Let us  consider the last term in \eqref{eq9_lemma_estimate}.
By using Remark
\ref{esssup_cadlag_process} 
applied to the right-continuous process  
$(\int_{]0,t]} \e^{\beta s}\int_E \tilde Y_{s-}\tilde k_s(e) \tilde N(ds,de))_{t\in[0,T]}$ 
  and Burkholder-Davis-Gundy inequalities, we get
\begin{equation}\label{eq20_lemma_estimate} 
2E[\esssup_{\tau \in \stopo}|\int_0^\tau \e^{\beta s}\int_E \tilde Y_{s-}\tilde k_s(e) \tilde N(ds,de) |]
\leq 
2cE\left[ \sqrt{\int_{0}^T \e^{2\beta s}\int_E \tilde Y_{s-}^2\tilde k_s^2(e) N(ds,de) }\right],
\end{equation}
where $c>0$ is a positive "universal" constant (which does not depend on the other parameters).  

The inequality \eqref{inegalite-essup}   and the trivial inequality $ab\leq \frac 1 2 a^2+\frac 1 2 b^2$ lead to  

%
\begin{align}\label{add_eq_550}
2cE\left[ \sqrt{\int_{0}^T \e^{2\beta s}\int_E \tilde Y_{s-}^2\tilde k_s^2(e) N(ds,de) }\right] \leq 
E\left[ \sqrt{\frac 1 2\esssup_{\tau\in\stopo}\e^{\beta \tau}\tilde Y_{\tau}^2}\sqrt{8c^2
\int_0^T \e^{\beta s}\int_E \tilde k_s^2(e)N(ds,de) }\right] \nonumber\\
\leq  \frac 1 4 E \left[ \esssup_{\tau\in\stopo}\e^{\beta \tau}\tilde Y_{\tau}^2 \right]+4c^2 E \left[ \int_0^T \e^{\beta s}\int_E \tilde k_s^2(e)N(ds,de) \right]\nonumber
= \frac 1 4 \vvertiii{\tilde Y}^2_\beta + 
 4c^2\|\tilde k\|^2_{\nu,\beta}.
\end{align}
Here, the   equality has been obtained by adding and subtracting $4c^2\|\tilde k\|^2_{\nu,\beta}$ (on the left-hand side) and 
by using the fact that $E[\int_0^T \e^{\beta s}\int_E \tilde k_s^2(e) \tilde N(ds,de)]=0$.  
By using similar arguments, we obtain that the last but one term in \eqref{eq9_lemma_estimate} satisfies 
\begin{equation}\label{eq30_lemma_estimate} 
2E[\esssup_{\tau \in \stopo}\int_0^\tau \e^{\beta s}\tilde Y_{s-}\tilde Z_s d W_s]\leq 
\frac 1 2 \vvertiii{\tilde Y}^2_\beta+ 2c^2\|\tilde Z\|^2_{\beta},
\end{equation}
where $c$ is the same universal constant as above. 
By \eqref{eq9_lemma_estimate}, we thus derive that
$\frac 1 4 \vvertiii{\tilde Y}^2_\beta \leq \varepsilon^2 \|\tilde f\|^2_\beta+2c^2  \|\tilde Z\|^2_\beta+ 4c^2\|\tilde k\|^2_{\nu,\beta}.$
This inequality, together with the estimates from  \eqref{eq8_lemma_estimate}, gives
$ \vvertiii{\tilde Y}^2_\beta \leq 4\varepsilon^2(1+6c^2) \|\tilde f\|^2_\beta,$
which is the desired result.   
  {\hspace*{.1pt}\hspace*{\fill}\BOX\vskip\baselineskip}     

In the following lemma, we prove existence and uniqueness of the solution to the RBSDE from Definition \ref{def_solution_RBSDE} (in the case where the driver $f$ does not depend on $y$, $z$ and $\mathpzc{k}$) and we characterize the first component of the solution as the "value process" of an optimal stopping problem.
\begin{lemma}\label{f}
Suppose that $f$ does not depend on $y,z,\mathpzc{k}$, that is $f(\omega,t, y,z,\mathpzc{k}) = f(\omega,t)$, where $f$ 
is a progressive process with $E[\int_0^T f(t)^2dt] < + \infty$. Let $(\xi_t)$ be an obstacle. 
Then, the RBSDE from Definition \ref{def_solution_RBSDE} admits a unique solution $(Y,Z,k,A,C)\in {\cal S}^2 \times \H^2 \times \H^2_\nu\times {\cal S}^2\times {\cal S}^2$, and  for each $S \in \stopo$,  we have 
\begin{eqnarray}\label{deux-2}
 Y_S= \esssup_{\tau \in\stops } E[ \xi_{\tau} + \int_S^\tau f(t)dt \mid \Fc_S] \quad   \rm{a.s.}
\end{eqnarray} 
Moreover, the following property holds:
\begin{equation}\label{max}
Y_S=\xi_S\vee Y_{S+} \quad   \rm{a.s.}
\end{equation}
We also have  $Y_{S+}= \esssup_{\tau >S} E[ \xi_{\tau} + \int_S^\tau f(t)dt \mid \Fc_S] \quad   \rm{a.s.}$, for all $S \in \stopo$.\\
 If, furthermore, the obstacle $(\xi_t)$ is l.u.s.c.   along stopping times, then $(A_t)$ is continuous.
\end{lemma}


The proof of the lemma is divided in several steps. First, we exhibit a "natural candidate" $\barY$ to be the first component of the  solution to the RBSDE with parameters $(f, \xi)$; we prove that $\barY$ belongs to the space ${\cal S}^{2}$ and we give an estimate of $\vertiii{\barY}_{{\cal S}^2}^2$ in terms of $\vertiii{\xi}_{{\cal S}^{2}}^2$ and  $\| f\|^2_{\H^{2}}$. 
In the second step,  we exhibit "natural candidates" for the processes $A$ and $C$, and a "natural candidate" $M$ for the martingale part of the solution to the RBSDE with parameters $(f, \xi)$. In the third step, we prove that the processes $A$ and $C$ belong to ${\cal S}^{2}$ and we give an estimate of $\vertiii{A+C_-}_{{\cal S}^2}^2$. In the fourth step, we apply the martingale representation theorem to $M$, which gives the second component $Z\in\H^2$ and the third component $k\in\H^2_\nu$ of the solution. In the fifth step, we show the uniqueness of the solution. 
Finally, we prove property \eqref{max}
 and the last two assertions of the lemma.



\begin{proof}
For $S \in \stopo$, we define $\overline Y  (S)$ by 
\begin{eqnarray}\label{ddeux-2}
\overline Y  (S):= \esssup_{\tau \in \stops} E[ \xi_{\tau} + \int_S^\tau f(u)du \mid \Fc_S].
\end{eqnarray} 
By Proposition \ref{pro} in the Appendix, there exists a  ladlag optional  process $(\barY_t)_{t\in[0,T]}$ which \emph{aggregates} the family $(\barY(S))_{S\in\stopo},$ that is, 
\begin{equation}\label{eq_aggreg}
\barY_S=\barY(S) \text{ a.s. for all } S\in\stopo.
\end{equation}

\paragraph{Step 1} 
By using  
Jensen's inequality  and the triangular inequality,  we get 
\begin{equation}\label{eq_inequality_Y_X}
\begin{aligned}
|\barY_S|&\leq \esssup_{\tau \in \stops} E[| \xi_{\tau}| + |\int_S^\tau f(u)du| \mid \Fc_S]\leq E[\esssup_{\tau \in \stops} | \xi_{\tau}| + \int_0^T |f(u)|du \mid \Fc_S]=E[ X|\Fc_S],
\end{aligned}
\end{equation} 
%
a.s., for all $S\in\stopo$, where we have set 
\begin{equation}\label{eq_def_X}
X:=  \int_0^T |f(u)|du+\esssup_{\tau \in \stopo} | \xi_{\tau}|.
\end{equation}
We apply Cauchy-Schwarz inequality to obtain 
\begin{equation}\label{eq_Lp-estimate_X}
E[X^2]\leq c T\|f\|_{\H^2}^2+c\vertiii{\xi}^2_{{\cal S}^{2}},
\end{equation}
where
$c>0$ is a positive constant, which, in the sequel, is allowed to differ from line to line.   From \eqref{eq_inequality_Y_X},    
we get
$\esssup_{S\in\stopo}|\barY_S|^2\leq \esssup_{S\in\stopo}|E[ X|\Fc_S]|^2=\sup_{t\in[0,T]}|E[ X|\Fc_t]|^2,$
where the equality follows from the right-continuity of the process $(E[ X|\Fc_t])_{0\leq t\leq T}$, together with  Remark \ref{esssup_cadlag_process}, 
By using  this and Doob's martingale inequalities in $L^2$, we obtain 
\begin{equation}\label{eq_estimate_Y_f_xi}
E[\esssup_{S\in\stopo}|\barY_S|^2]\leq E[\sup_{t\in[0,T]}|E[ X|\Fc_t]|^2]\leq cE[X^2]\leq cT\|f\|_{\H^2}^2+c\vertiii{\xi}^2_{{\cal S}^{2}}, 
\end{equation}
where the last inequality follows from \eqref{eq_Lp-estimate_X}.
%

\paragraph{Step 2}  By Proposition \ref{pro}, the process $(\barY_t + \int_0^t f(u) du)_{t\in[0,T]}$ is a strong supermartingale. Due to the previous step and to the assumption $f\in\H^2$, it is of class (D). Applying
Mertens decomposition (cf. Theorem \ref{thm_Mertens_decomposition}) 
  gives the following
\begin{equation}\label{eq_Mertens_decomposition_Y}
\barY_t=-\int_0^t f(u) du+M_t-A_t-C_{t-} \text{ for all $t\in[0,T]$ a.s.}, 
\end{equation}
where $M$ is a cadlag uniformly integrable martingale, $A$ is a nondecreasing right-continuous predictable process 
such that $ A_0= 0$, $E(A_T)<\infty$, and $C$ is a nondecreasing right-continuous adapted purely discontinuous process such that  $C_{0-}= 0$, $E(C_T)<\infty$.  
Let $\tau \in\stopo$. 
By Remark \ref{sautY}, 
$\Delta_+ \barY_{\tau}= {\bf 1}_{\{\barY_\tau= \xi_\tau\}}\Delta_+ \barY_{\tau}$ a.s.\, Now, by \eqref{eq_Mertens_decomposition_Y}, $\Delta C_{\tau}= - \Delta_+ \barY_{\tau}$ a.s. It follows that 
$\Delta C_{\tau}= {\bf 1}_{\{\barY_\tau= \xi_\tau\}}\Delta C_{\tau}$ a.s.\,
In other terms,  the process $C$ satisfies the minimality condition \eqref{RBSDE_C} (with $Y$ replaced by $\barY$).  
Moreover, thanks to a result from optimal stopping theory due to El Karoui 
(cf. \cite[Prop. 2.34]{EK};   cf. also \cite{Kob}),  for each predictable stopping time $\tau$, we have 
$\Delta A_{\tau}= {\bf 1}_{\{\barY_{\tau-}= \, \xi_{\tau-}\}}\Delta A_{\tau}$ a.s.\, For the continuous part $A^c$ of $A$, again by a result from optimal stopping theory (cf. \cite{ERRATUM}), we have 
$\int_0^T {\bf 1}_{\{\barY_t > \xi_t\}} dA^c_t = 0$ a.s.\, The process $A$ thus satisfies the minimality condition 
\eqref{RBSDE_A} (with $Y$ replaced by $\barY$). 
We have $\barY_T=\barY(T)= \xi_T\,$ a.s.  (due to \eqref{ddeux-2} and \eqref{eq_aggreg}).
Also, from \eqref{ddeux-2} and \eqref{eq_aggreg}, we have $\barY_S=\barY(S)\geq \xi_S \,$ a.s. 
 for all $S\in\stopo$, which, along with a classical result of the general theory of processes 
 (cf. \cite[Theorem IV.84]{DM1}) implies that
  $\barY_t\geq \xi_t,$ $0 \leq t \leq T$, a.s. 
\paragraph{Step 3} Let us consider the {\em Mertens process} associated with the strong supermartingale $\barY_\cdot  + \int_0^\cdot f(u) du$, that is the process $(A_t + C_{t-})$, where the processes $(A_t)$ and $(C_{t-})$ are given by \eqref{eq_Mertens_decomposition_Y}.  We show that $A_T + C_{T-} \in L^2$. By arguments similar to those used in the proof of  \eqref{eq_inequality_Y_X}, we see that 
$|\overline Y_S+ \int_0^S f(u) du|\leq E[ X|\Fc_S]$,
where $X$ is the random variable defined in  \eqref{eq_def_X}.
This observation, together with a result from potential theory (cf. Corollary \ref{corollary_DM}), gives $E\left[\left(A_T + C_{T-}\right)^2\right]\leq c E[X^2],$ where $c>0$.
By combining this inequality with inequality \eqref{eq_Lp-estimate_X} , we obtain
\begin{equation}\label{eq_norm_A_T}
E\left[\left(A_T + C_{T-}\right)^2\right]\leq cT\|f\|_{\H^2}^2+c\vertiii{\xi}^2_{{\cal S}^{2}}, 
\end{equation}
 where we have again allowed the positive constant $c$ to vary from line to line. 
We conlude that $A_T + C_{T-} \in L^2$. Hence, $A_T$  and $C_{T-}(= C_T)$ are square integrable, which, due to the nondecreasingness of $A$ and $C$, is equivalent to $A \in\mathcal{S}^2$ and $C \in\mathcal{S}^2$.
\paragraph{Step 4} 
The martingale $M$ from the decomposition \eqref{eq_Mertens_decomposition_Y} belongs to $\mathcal{S}^2$; this is a consequence of Step 1., Step 3., and the fact that the process $(\int_0^t f(u) du)_{t\in[0,T]}$ is in $\mathcal{S}^2$ (since $f\in\H^2$). By the  martingale representation theorem  (cf., e.g., Lemma 2.3 in \cite{Tang}) there exists a unique predictable process $Z\in\H^2$ and a unique \emph{predictable} $k\in\H^2_\nu$ such that  
$dM_t = Z_t  dW_t + \int_{ E} k_t(e) \tilde{N}(dt,de).$ Combining this step with the previous ones gives that $(\overline Y,Z,k,A,C)$ is a solution to the RBSDE with parameters $f$ and $\xi$. 
\paragraph{Step 5}
Let us now prove the uniqueness of the solution. 
Let  $Y$ be the first component of a  solution to the RBSDE with driver $f$ and obstacle $\xi$. 
Then, by  the previous Lemma \ref{Lemma_estimate} (applied with $f^1=f^2=f$) we obtain  $Y=\overline{Y}$ in $\mathcal{S}^2$, where $\overline{Y}$ is given by \eqref{ddeux-2}. The uniqueness of the other components follows  from the uniqueness of Mertens 
decomposition of strong optional supermartingales and from the uniqueness of the martingale representation. 
(We note that the uniqueness of the second and the third component can be obtained also by applying the previous Lemma \ref{Lemma_estimate}.) 
\paragraph{Step 6} 
Property \eqref{max}  and the characterization of $Y_{S+}$
 as the value function of an optimal stopping problem follow from
 Proposition \ref{pro}
 parts (ii) and (iii).  
The last assertion of Lemma \ref{f} follows from classical results 
(cf., for instance, the last statement in Thm. 20 of \cite[page 429]{DM2}, or \cite{Kob}).
\end{proof}

With the help of the previous two lemmas, we now prove the existence and uniqueness of the solution to the RBSDE from  Definition  \ref{def_solution_RBSDE} in the case of a general Lipschitz driver. 
\begin{theorem}[Existence and uniqueness of the solution]\label{exiuni}
Let  $\xi$  be a left-limited and r.u.s.c. 
 process in $\mathcal{S}^2$ and let $f$ be a  Lipschitz driver. 
The RBSDE with parameters $(f,\xi)$ from Definition \ref{def_solution_RBSDE} admits a unique solution $(Y,Z,k,A,C)\in \mathcal{S}^2  \times \H^2 \times \H^2_\nu \times \mathcal{S}^2\times \mathcal{S}^2.$\\
Moreover, for all $S\in\stopo$, we have
\begin{equation}\label{yy+}
  Y_S=\xi_S\vee Y_{S+} \quad   \rm{a.s.}
  \end{equation}
Furthermore, if $(\xi_t)$ is  assumed  l.u.s.c. along stopping times, then $(A_t)$ is continuous (or equivalently, the process $(Y_t)$ is l.u.s.c. along stopping times).

\end{theorem}

\begin{Remark} We will see that, as in the right-continuous case, the existence and uniqueness result follows from a fixed point theorem applied in 
an appropriate Banach space. In the right-continuous case, the Banach space  is classically 
the product space $\H^2 \times \H^2\times \H^2_\nu$ equipped with the norm 
 $ \| Y\|_{\beta}^2  +   \| Z\|_{\beta}^2+\|k \|_{\nu,\beta}^2$ (cf, e.g. \cite{ElKaroui97}, \cite{HO2},
 \cite{QuenSul2}). However, this Banach space does not suit our purpose. 
Indeed, let us make the following observation. Let $Y$ be an optional process such that  
$\|Y \|_\beta =0$.
 We then have $Y_t=0$, $0\leq t \leq T$ $dP \otimes dt$-a.e.\, When $Y$ is right-continuous, this implies the  indistinguishability of $Y$ from the null process $0$, that is, the property $Y_t=0$, $0\leq t \leq T$ a.s.\,
 However, if $Y$ is not right-continuous, the implication is not necessarily true. \footnote{However, the property holds 
 for the "triple bar" map $\vvertiii {\cdot}_{\beta}$ on $\mathcal{S}^2$.  More precisely, if $Y \in \mathcal{S}^2$ with  
$\vvertiii {Y}_{\beta}=0$, then $Y_t=0$, $0\leq t \leq T$ a.s.\, because $\vvertiii {\cdot}_{\beta}$
 is a norm on 
$\mathcal{S}^2$. Note that 
$\| \cdot \|_{\beta}$ is only a semi-norm on $\mathcal{S}^2$. }
 Hence, applying a fixed point theorem in this  space  cannot give us 
 uniqueness  of the solution of our reflected BSDE in the sense of processes, that is, up to indistinguishability. 

\end{Remark}

\begin{proof}
 For each $\beta>0$, we denote by $\mathcal{B}_\beta^2$ the space $\mathcal{S}^2 \times \H^2\times \H^2_\nu$ which we  equip with the norm $\|(\cdot,\cdot,\cdot) \|_{\mathcal{B}_\beta^2}$ defined by 
$\| (Y, Z, k)\|_{\mathcal{B}_\beta^2}^2:=\vvertiii {Y}_{\beta}^2  +   \| Z\|_{\beta}^2+\|k \|_{\nu,\beta}^2, $ for  $(Y,Z,k)\in \mathcal{S}^2 \times \H^2\times \H^2_\nu.$ Since $(\H^2, \|\cdot\|_\beta)$ and $(\H^2_\nu,\|\cdot\|_{\nu,\beta})$ are Banach spaces, and
since by Proposition \ref{Prop_Banach_space}, $(\mathcal{S}^2, 
\vvertiii {\cdot}_{\beta})$ is a Banach space, it follows that
$(\mathcal{B}_\beta^2, \|\cdot \|_{\mathcal{B}_\beta})$ is a Banach space.

We define a mapping $\Phi$ from $\mathcal{B}_\beta^2$ into
itself as follows: for a given $(y, z, l) \in \mathcal{B}_\beta^2$, we set $\Phi (y, z, l):= (Y, Z, k)$, where $Y, Z, k$ 
are
the first three components of the solution $(Y, Z, k,A,C)$ to the RBSDE  associated with driver $f(s):= 
f(s, y_s, z_s, l_s)$ and with obstacle $\xi$. 
The mapping $\Phi$ is well-defined by Lemma \ref{f}.  

%
  
%
Let $(y',z',l')$ and $(y'',z'',l'')$ be two elements of $\mathcal{B}_\beta^2$. We set $(Y', Z', k')= \Phi (y', z', l')$ and $(Y'', Z'', k'') = \Phi (y'', z'', l'').$  We also set $\tilde Y:=Y'-Y''$, $\tilde Z:=Z'-Z''$, $\tilde k:=k'-k''$, $\tilde y:=y'-y''$, $\tilde z:=z'-z''$, $\tilde l:=l'-l''$.

Let us prove that for a suitable choice of the parameter $\beta>0$ the mapping  $\Phi$ is a contraction from the Banach space $\mathcal{B}_\beta^2$ into itself.  
By applying Lemma \ref{Lemma_estimate}, we get 
$$\vvertiii{\tilde Y}^2_\beta +\|\tilde Z\|^2_\beta+\|\tilde k\|^2_{\nu,\beta}\leq  6\varepsilon^2(1+4c^2)  \|f(y',z',l')-f(y'',z'',l'')\|^2_\beta\;,  $$
for all $\varepsilon>0$,  for all $\beta\geq\frac 1 {\varepsilon^2}$. By using the Lipschitz property of $f$ and the fact that $(a+b+c)^2\leq 3(a^2+b^2+c^2)$ for all $(a,b,c)\in\R^3$, we obtain
$\|f(y',z',l')-f(y'',z'',l'')\|_{\beta}^2\leq C_{K}(\|\tilde y\|_{\beta}^2+\|\tilde z\|_{\beta}^2+\|\tilde l\|_{\nu,\beta}^2),$
where $C_{K}$ is a positive constant depending on the Lipschitz constant $K$ only.
Thus, for all $\varepsilon>0$,  for all $\beta\geq\frac 1 {\varepsilon^2}$, we have
$   \vvertiii{\tilde Y}^2_\beta +\|\tilde Z\|^2_\beta+\|\tilde k\|^2_{\nu,\beta}$ $\leq$  $6\varepsilon^2C_K(1+4c^2)(\|\tilde y\|_{\beta}^2+\|\tilde z\|_{\beta}^2+\|\tilde l\|^2_{\nu,\beta}).  $
Now, using Fubini's theorem, we get $\|\tilde y\|_{\beta}^2 \leq T \vvertiii{\tilde y}_{\beta}^2$. Hence, we have
$$   \vvertiii{\tilde Y}^2_\beta +\|\tilde Z\|^2_\beta+\|\tilde k\|^2_{\nu,\beta}\leq  6\varepsilon^2C_K(1+4c^2)(T+1)(\vvertiii{\tilde y}_{\beta}^2+\|\tilde z\|_{\beta}^2+\|\tilde l\|^2_{\nu,\beta}).  $$
Thus, for $\varepsilon>0$ such that $6\varepsilon^2C_K(1+4c^2)(T+1)<1$ and $\beta>0$ such that $\beta\geq\frac 1 {\varepsilon^2}$  the mapping $\Phi$ is a contraction. 
By the Banach fixed-point theorem, we get that $\Phi$ has a unique fixed point in $\mathcal{B}_\beta^2$, denoted by $(Y,Z,k)$, that is, such that $(Y,Z,k)= \Phi (Y,Z,k)$. By definition of the mapping $\Phi$, the process $(Y,Z,k)$ is thus equal to the first three components of the solution $(Y, Z, k, A, C)$ to the reflected BSDE associated with the driver process $g(\omega,t):=f(\omega, t,Y_t(\omega),Z_t(\omega),k_t(\omega))$ and with obstacle $\xi$. It follows that $(Y,Z,k,A,C)$ is the unique solution 
to the RBSDE with parameters $(f,\xi)$.

Property \eqref{yy+} follows from Eq. \eqref{max} of  Lemma \ref{f} and from the fact that $(Y,Z,k,A,C)$ 
is equal to the solution of the reflected BSDE associated with the driver process $g(\omega,t):=f(\omega,t,Y_t(\omega),Z_t(\omega),k_t(\omega))$.

The last assertion of the theorem follows  from Lemma \ref{f} (fourth assertion) applied with the process $g(\omega,t):=f(\omega,t,Y_t(\omega),Z_t(\omega),k_t(\omega))$.

\end{proof}

%
%
%
\section{Optimal stopping with $f$-conditional expectations}\label{sec-charact}
\subsection{Formulation of the problem}\label{sec-charact_subsec_1}

Let $T >0$ be the terminal time and $f$ be a predictable Lipschitz driver. 
 Let $(\xi_t, 0 \leq t \leq T )$ 
 be a left-limited r.u.s.c. process
 in ${\cal S}^2$ modelling a dynamic financial position.  
The risk of $\xi$ is assessed by a dynamic risk measure equal, up to a minus sign, to the $f$-conditional expectation of $\xi$. More precisely: let $T'\in[0,T]$ be a fixed (for the present) instant before the terminal time $T$;  the gain of the position at $T'$ is equal to $\xi_{T'}$ and the risk at time $t$, where $t\in[0,T']$, is assessed by  $-\mathcal{E}_{t,T'}^f(\xi_{T'}).$ Here, we use the usual notation $\mathcal{E}_{\cdot,T'}^f(\xi_{T'})$ for the first component of the BSDE with driver $f$, terminal time $T'$ and terminal condition $\xi_{T'}$; the random variable $\mathcal{E}_{t,T'}^f(\xi_{T'})$ is referred to as the $f$-conditional expectation of $\xi_{T'}$ at time $t$. The modelling is similar when $T'\in[0,T]$ is replaced by a more general stopping time $\tau\in\stopo$
\footnote{  Recall that a process $Y$ is the solution to the BSDE associated 
with driver $f$, terminal time $\tau$ and terminal condition $\zeta$ (where $\zeta$ is an $\cf_\tau$-measurable square-integrable random variable) if for almost all $\omega\in\Omega$, for all $t\in[0,T]$, $Y_t(\omega)=\bar{Y}_t(\omega)$, where $\bar Y$ denotes the solution to the BSDE associated with driver $f {\bf 1}_{t \leq \tau}$, terminal time $T$ and terminal condition $\zeta$. The process $Y$ is also denoted $\mathcal{E}_{\cdot,\tau}^f(\zeta)$.}
.\\
We are now interested in stopping the process $\xi$ in such a way that the risk be minimal. We are thus led to formulating the following optimal stopping problem (at time $0$): 
\begin{equation} \label{problem_at_time_0}
v(0) = -  {\rm ess} \sup_{\tau \in \T_{0,T}}{\cal E}^f_{0, \tau}(\xi_{\tau}).
\end{equation}
We recall that in our framework (as opposed to the simpler case of a brownian filtration) the monotonicity property of $f$-conditional expectations is not automatically satisfied.   From now on we make the following assumption  on  the driver $f$,   which 
ensures  the  nondecreasing property of ${\cal E}^f(\cdot)$ by the comparison theorem for BSDEs  with jumps  (cf. \cite[Thm. 4.2]{QuenSul}).
\begin{Assumption}\label{Royer}
Assume that  $dP \otimes dt$-a.e.\, for each $(y,z, \mathpzc{k}_1,\mathpzc{k}_2)$ $\in$ $ \RB^2 \times (L^2_{\nu})^2$,
$$f( t,y,z, \mathpzc{k}_1)- f(t,y,z, \mathpzc{k}_2) \geq \langle \theta_t^{y,z, \mathpzc{k}_1,\mathpzc{k}_2}  \,,\,\mathpzc{k}_1 - \mathpzc{k}_2 \rangle_\nu,$$ 
with
\begin{equation*}
\theta:  [0,T]  \times \Omega\times \RB^2 \times  (L^2_{\nu})^2  \rightarrow  L^2_{\nu}\,; \, (\omega, t, y,z, \mathpzc{k}_1,\mathpzc{k}_2)\mapsto 
\theta_t^{y,z, \mathpzc{k}_1,\mathpzc{k}_2}(\omega,\cdot)
\end{equation*}
 ${\cal P } ·\otimes {\cal B}(\R^2) \otimes  {\cal B}( (L^2_{\nu})^2 )$-measurable,
 satisfying
$\|\theta_t^{y,z, \mathpzc{k}_1,\mathpzc{k}_2}(\cdot)\|_{\nu}  \leq K\,$  for all $(y,z, \mathpzc{k}_1,\mathpzc{k}_2)$ $\in$ $\RB^2 \times (L^2_{\nu})^2$, $ \,dP\otimes dt $-a.e.\,, where $K$ is a positive constant, and such that  
 \begin{equation}\label{condi}
\theta_t^{y,z, \mathpzc{k}_1,\mathpzc{k}_2} (e)\geq -1,
\end{equation}
for all $(y,z, \mathpzc{k}_1,\mathpzc{k}_2)$ $\in$ $\RB^2 \times (L^2_{\nu})^2$, $\,dP\otimes dt \otimes d\nu(e)-{\rm a.e.}$
\end{Assumption} 

The above assumption is satisfied if, for example, $f$ is of class ${\cal C}^1$ with respect to $k$ such that $\nabla_k f$ is bounded (in $L^2_{\nu}$) and $\nabla_k f \geq -1$ (see 
Proposition A.2. in \cite{DQS2}).

\begin{Remark}\label{Rk_strict_mon}
 The strict comparison theorem for  BSDEs with jumps (cf. Theorem 4.4 in \cite{QuenSul}) ensures that if the inequality \eqref{condi} is strict,  then ${\cal E}^f(\cdot)$ is {\em strictly monotonous} in the following sense: for $\tau\in\stopo$, for  
$\xi^1, \xi^2$ $\in$ $L^2(\cf_\tau)$ such that $\xi^1 \leq \xi^2$ a.s.\,, and for  $S \in\stopo$ such that $S\leq \tau$ a.s., the property  ${\cal E}_{S,\tau}^f(\xi^1)
= {\cal E}_{S,\tau}^f(\xi^2)$ a.s., implies $\xi^1= \xi^2$ a.s.\\
A counter-example to the strict monotonicity of ${\cal E}^f(\cdot)$ in the case where the strict inequality in \eqref{condi} is not assumed is given in  \cite{QuenSul} (cf. also Example \ref{exe} in the Appendix). 
\end{Remark}
As is usual in optimal control, we embed the above problem \eqref{problem_at_time_0} in a larger class of problems. We thus consider for each $S\in$  $\stopo$, the random variable
\begin{equation} \label{vvv}
v(S) = -  {\rm ess} \sup_{\tau \in \T_{S,T}}{\cal E}^f_{S, \tau}(\xi_{\tau}),
\end{equation}
which corresponds to the minimal risk measure at time $S$.
Our aim is to characterize   
 $v(S)$ 
for each $S \in \stopo$, and  to study the  existence 
 of 
an $S$-optimal stopping time $\tau^* \in \T_{S,T}$, i.e. a stopping time $\tau^* \in \T_{S,T}$ 
such that   $v(S) = -{\cal E}^f_{S, \tau^*}(\xi_{\tau^*})$ a.s.  

\subsection{Characterization of  the value function  
as the solution of  an RBSDE}\label{sec-charact_subsec_2}

In this section, we show that the minimal risk measure $v$ defined by (\ref{vvv})  coincides with $-Y$, where $Y$ is (the first component of) the solution to the reflected BSDE associated with driver $f$ and obstacle
 $\xi$. We also investigate the question of the existence of an  $\varepsilon$-optimal stopping time, and that of the existence of an optimal stopping time (under suitable assumptions on the process $\xi$).
The following terminology will be used in the sequel. 
Let $Y$ be a process in $ \mathcal S^2$. Let $f$ be a predictable Lipschitz driver satisfying Assumption \ref{Royer}.
\begin{itemize}
\item  The process $(Y_t)$ is said to be a strong ${\cal E}^f$-supermartingale (resp ${\cal E}^f$-submartingale), if ${\cal E}^f_{S ,\tau}(Y_{\tau}) \leq Y_{S}$ (resp. ${\cal E}^f_{S ,\tau}(Y_{\tau})\geq Y_{S})$ a.s.\, on $S \leq \tau$,  for all $ S, \tau \in \stopo$. \\
The process $(Y_t)$ is said to be a strong ${\cal E}^f$-martingale if it is both a strong ${\cal E}^f$-super and ${\cal E}^f$-submartingale.
\item Let $S,\tau\in\stopo$ be such that $S\leq \tau$ a.s. The process $Y$ is said to be a strong ${\cal E}^f$-supermartingale (resp. a strong ${\cal E}^f$-submartingale) on $[S, \tau]$ if for all $\sigma,\mu\in\stopo$ such that $S\leq \sigma\leq \mu\leq \tau$ a.s., we have  $Y_\sigma \geq  {\cal E}^f_{\sigma ,\mu}(Y_{\mu})$ a.s. (resp. $Y_\sigma \leq  {\cal E}^f_{\sigma ,\mu}(Y_{\mu})$ a.s.) We say that $Y$ is a strong ${\cal E}^f$-martingale on $[S, \tau]$ if it is both a strong ${\cal E}^f$-super and submartingale on $[S, \tau]$.
\end{itemize} 
\begin{Remark}\label{Remarque_prealable}
We note that a process $Y\in\mathcal{S}^2$ is a strong ${\cal E}^f$-martingale on $[S,\tau]$ (where $S$, $\tau\in\stopo$ are such that $S\leq\tau$ a.s.)  if and only if, on $[S,\tau]$, $Y$ is indistinguishable from  the solution to the BSDE associated with driver $f$, terminal time $\tau$ and terminal condition 
$Y_\tau$.
%
It follows that for a process $Y\in\mathcal{S}^2$ to be a strong ${\cal E}^f$-martingale on $[S, \tau]$, it is sufficient to have:  $Y_\sigma={\cal E}^f_{\sigma,\tau}(Y_\tau)$ a.s., for all $\sigma\in\stopo$ such that $S\leq \sigma\leq \tau$ a.s.    
	  
\end{Remark}
\begin{Property}\label{Pty-strong}
Let $f$ be a predictable Lipschitz driver satisfying Assumption \ref{Royer}. Let $S,\tau\in\stopo$ with $S\leq\tau$ a.s. Let $Y$ be a  strong ${\cal E}^f$-supermartingale on $[S, \tau].$ We introduce the following two assertions:

\begin{description}
\item[(i)] The process $Y$ is a strong ${\cal E}^f$-martingale on $[S, \tau].$
\item[(ii)] $Y_S =  {\cal E}^f_{S ,\tau}(Y_{\tau})$ a.s.

\end{description} 
Assertion ${\bf (i)}$ implies Assertion ${\bf (ii)}$.\\
If, in Assumption \ref{Royer}, we further assume the strict inequality $\theta_t^{y,z, \mathpzc{k}_1,\mathpzc{k}_2} > -1$, then Assertion ${\bf (ii)}$ implies Assertion {\bf (i)}.

\end{Property}

\begin{proof}
The implication $(i)\Rightarrow(ii)$ is due to the definition. Let us show the converse implication.
Let $\sigma\in\stopo$  be such that $S\leq \sigma\leq \tau$ a.s.
By using $(ii)$ and  the consistency property of $f$-expectations, we obtain
$Y_S =  {\cal E}^f_{S ,\sigma}\big({\cal E}^f_{\sigma ,\tau}(Y_{\tau})\big)$  a.s. 
By using the strong ${\cal E}^f$-supermartingale property of $Y$ and the monotonicity of $f$-expectations, we obtain 
${\cal E}^f_{S ,\sigma}\big({\cal E}^f_{\sigma ,\tau}\big(Y_\tau\big)\big)\leq {\cal E}^f_{S ,\sigma}\big(Y_\sigma\big)
\leq Y_S$  a.s. 
From the previous two equations we get
$Y_S =  
{\cal E}^f_{S ,\sigma}\big({\cal E}^f_{\sigma ,\tau}\big(Y_\tau\big)\big)={\cal E}^f_{S ,\sigma}\big(Y_\sigma\big)$  a.s. 
In particular, 
\begin{equation}\label{stri}
{\cal E}^f_{S ,\sigma}\big(Y_\sigma\big)={\cal E}^f_{S ,\sigma}\big({\cal E}^f_{\sigma ,\tau}\big(Y_\tau\big)\big) \text{ a.s. }
\end{equation}
 Since $\theta_t^{y,z, k_1,k_2} > -1$, 
${\cal E}^f (\cdot)$ is strictly monotonous (cf. Remark \ref{Rk_strict_mon}).
From this, together with equality \eqref{stri} and  the inequality $Y_\sigma\geq    {\cal E}^f_{\sigma ,\tau}\big(Y_\tau\big) \text{ a.s.},$ we get  $Y_\sigma=    {\cal E}^f_{\sigma ,\tau}\big(Y_\tau\big) \text{ a.s. }$ The process $Y$ is thus a strong ${\cal E}^f$-martingale on $[S, \tau].$
\end{proof}

We next show a lemma which will be used in the proof of the main result of this section.

\begin{Lemma}\label{lemma_epsilon_optimality}
Let $f$ be a predictable Lipschitz driver satisfying Assumption \ref{Royer} and $\xi$ be a left-limited r.u.s.c.
 process in $\mathcal{S}^2$.
Let 
$(Y,Z,k,A,C)$ be the solution to the reflected BSDE with parameters $(f,\xi)$ as in Definition \ref{def_solution_RBSDE}. Let $\varepsilon >0$ and  $S \in \stopo$. Let
$\tau^{\varepsilon}_S$ be defined by
\begin{equation}\label{eq_tau_epsilon_S}
\tau^{\varepsilon}_S:= \inf \{ t \geq S\colon Y_t \leq \xi_t  + \varepsilon\}.
\end{equation}
The following two statements hold:
\begin{description}
\item[(i)] $Y_{\tau^{\varepsilon}_S} \leq \xi_{\tau^{\varepsilon}_S} + \varepsilon \quad   \rm{a.s.}\,$
\item[(ii)] The process $Y$ is a strong $\mathcal{E}^f$-martingale on $[S,\tau^{\varepsilon}_S]$.
\end{description}
\end{Lemma}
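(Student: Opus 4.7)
The plan is to handle (i) by a case analysis at the debut $\tau^\varepsilon_S$, using the r.u.s.c. of $\xi$ combined with the minimality condition \eqref{RBSDE_C} on the purely discontinuous process $C$, and then to deduce (ii) by showing that neither $A$ nor $C$ charges the stochastic interval $[S,\tau^\varepsilon_S]$, so that the RBSDE \eqref{RBSDE} reduces to a plain BSDE on this interval, which by Remark \ref{Remarque_prealable} is exactly the strong $\mathcal{E}^f$-martingale property.

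For (i), I first observe that $\tau^\varepsilon_S$ is a well-defined stopping time as the debut of an optional set in a right-continuous complete filtration, and that $\tau^\varepsilon_S\leq T$ because $Y_T=\xi_T$. If the infimum is attained, i.e. $Y_{\tau^\varepsilon_S}\leq\xi_{\tau^\varepsilon_S}+\varepsilon$, there is nothing to prove. Otherwise, I pick $t_n\downarrow\tau^\varepsilon_S$ strictly from the right with $Y_{t_n}\leq\xi_{t_n}+\varepsilon$; using that $Y$ is ladlag (cf. Remark \ref{Rmk_def_RBSDE_strong_supermartingale}) and that $\xi$ is r.u.s.c., taking $\limsup$ yields $Y_{\tau^\varepsilon_S+}\leq\limsup_n\xi_{t_n}+\varepsilon\leq\xi_{\tau^\varepsilon_S}+\varepsilon$. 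The dichotomy is then whether $Y$ has a right jump at $\tau^\varepsilon_S$: if $\Delta_+ Y_{\tau^\varepsilon_S}=0$ then $Y_{\tau^\varepsilon_S}=Y_{\tau^\varepsilon_S+}$ and I am done; if $\Delta_+ Y_{\tau^\varepsilon_S}\neq 0$, then by Remark \ref{Rmk_the_jumps_of_C} we have $\Delta C_{\tau^\varepsilon_S}>0$, and the minimality property \eqref{RBSDE_C} forces $Y_{\tau^\varepsilon_S}=\xi_{\tau^\varepsilon_S}$, hence again $Y_{\tau^\varepsilon_S}\leq\xi_{\tau^\varepsilon_S}+\varepsilon$.

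For (ii), the starting point is that $Y_t-\xi_t>\varepsilon>0$ for every $t\in[S,\tau^\varepsilon_S)$. I would then argue that this strict inequality propagates to kill each of the three "pushing" ingredients on $[S,\tau^\varepsilon_S]$: (a) since $\int_0^T\mathbf{1}_{\{Y_t>\xi_t\}}\,dA^c_t=0$ and $[S,\tau^\varepsilon_S)\subset\{Y>\xi\}$, the continuous nondecreasing process $A^c$ is constant on $[S,\tau^\varepsilon_S]$; (b) for every predictable stopping time $\tau$ with $S<\tau\leq\tau^\varepsilon_S$, approximating $\tau$ from the left by times in $[S,\tau^\varepsilon_S)$ and passing to left limits (which exist by the ladlag property of $Y$ and the left-limitedness of $\xi$) gives $Y_{\tau-}-\xi_{\tau-}\geq\varepsilon>0$, so condition \eqref{RBSDE_A} yields $\Delta A^d_\tau=0$, and by the predictable section theorem $A^d$ has no jumps on $(S,\tau^\varepsilon_S]$; (c) at every stopping time $\tau\in[S,\tau^\varepsilon_S)$ the strict inequality $Y_\tau>\xi_\tau$ combined with \eqref{RBSDE_C} yields $\Delta C_\tau=0$. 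Assembling these, for every $\sigma\in\ct_{S,\tau^\varepsilon_S}$ I obtain $A_{\tau^\varepsilon_S}-A_\sigma=0$ and $C_{(\tau^\varepsilon_S)-}-C_{\sigma-}=0$, so the RBSDE equation on $[\sigma,\tau^\varepsilon_S]$ becomes
$$Y_\sigma=Y_{\tau^\varepsilon_S}+\int_\sigma^{\tau^\varepsilon_S}f(t,Y_t,Z_t)\,dt-\int_\sigma^{\tau^\varepsilon_S}Z_t\,dW_t,$$
i.e., $Y_\sigma=\mathcal{E}^f_{\sigma,\tau^\varepsilon_S}(Y_{\tau^\varepsilon_S})$, and Remark \ref{Remarque_prealable} concludes.

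I expect the main obstacle to be (i): without right-continuity of $\xi$ one cannot simply evaluate at the debut, and the proof must exploit the interplay between r.u.s.c. of $\xi$, the right jumps of $Y$ dictated by $C$ (Remark \ref{Rmk_the_jumps_of_C}), and the minimality condition \eqref{RBSDE_C} tying those jumps to the contact set $\{Y=\xi\}$. This is precisely the novel ingredient forced by allowing the obstacle to fail to be cadlag; the analogous arguments in the r.c.l.l. case do not require it.
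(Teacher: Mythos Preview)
Your proof is correct and follows essentially the same route as the paper's. For (i), the paper argues by contradiction (assuming $P(Y_{\tau^\varepsilon_S}>\xi_{\tau^\varepsilon_S}+\varepsilon)>0$, then using \eqref{RBSDE_C} to get $\Delta C_{\tau^\varepsilon_S}=0$, hence $Y_{\tau^\varepsilon_S}=Y_{(\tau^\varepsilon_S)+}$, then the r.u.s.c.\ of $\xi$ to reach a contradiction), which is your case analysis rearranged; for (ii), the paper works pathwise to show $A^c$, $A^d$ and $(C_{t-})$ are each constant on $[S,\tau^\varepsilon_S]$ via the Skorokhod conditions \eqref{RBSDE_A}--\eqref{RBSDE_C}, exactly as you do (you invoke the section theorem explicitly where the paper argues trajectory by trajectory, but in this Brownian filtration the two formulations are equivalent).
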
 

We note that $\tau^{\varepsilon}_S$ defined in \eqref{eq_tau_epsilon_S} is a stopping time as the \emph{début} after $S$ of a progressive set. Note also that $\tau^{\varepsilon}_S$ is valued in $[0,T]$ as $Y_T=\xi_T$ a.s. 
\begin{proof}
%
We first prove statement $(i)$. By way of contradiction, we suppose $P(Y_{\tau^{\varepsilon}_S}> \xi_{\tau^{\varepsilon}_S} + \varepsilon)>0$. We have  $\Delta C_{\tau^{\varepsilon}_S}=C_{\tau^{\varepsilon}_S}- C_{(\tau^{\varepsilon}_S)-}=0$ on the set $\{Y_{\tau^{\varepsilon}_S}> \xi_{\tau^{\varepsilon}_S} + \varepsilon\}$. On the other hand, due to Remark \ref{Rmk_the_jumps_of_C}, $\Delta C_{\tau^{\varepsilon}_S}=Y_{\tau^{\varepsilon}_S}-Y_{(\tau^{\varepsilon}_S)+}.$ Thus, $Y_{\tau^{\varepsilon}_S}=Y_{(\tau^{\varepsilon}_S)+}$ on the set $\{Y_{\tau^{\varepsilon}_S}> \xi_{\tau^{\varepsilon}_S} + \varepsilon\}.$ Hence,
\begin{equation}\label{eq_contradiction}
Y_{(\tau^{\varepsilon}_S)+}> \xi_{\tau^{\varepsilon}_S} + \varepsilon \text{ on the set } \{Y_{\tau^{\varepsilon}_S}> \xi_{\tau^{\varepsilon}_S} + \varepsilon\}.
\end{equation}
 We will obtain a contradiction with this statement.
Let us fix $\omega\in\Omega$. By definition of $\tau^{\varepsilon}_S(\omega)$, there exists a non-increasing sequence $(t_n)=(t_n(\omega))\downarrow \tau^{\varepsilon}_S(\omega) $ such that $Y_{t_n}(\omega)\leq \xi_{t_n}(\omega) + \varepsilon,$ for all $n\in\N$. Hence,
$\limsupn Y_{t_n}(\omega)\leq \limsupn \xi_{t_n}(\omega)+\varepsilon.$ As the process $\xi$ is r.u.s.c.\,, we have $\limsupn \xi_{t_n}(\omega)\leq \xi_{\tau^{\varepsilon}_S}(\omega)$. On the other hand, as $(t_n(\omega))\downarrow \tau^{\varepsilon}_S(\omega)$, we have $\limsupn Y_{t_n}(\omega)=Y_{(\tau^{\varepsilon}_S)+}(\omega).$ Thus, $Y_{(\tau^{\varepsilon}_S)+}(\omega)\leq \xi_{\tau^{\varepsilon}_S}(\omega)+\varepsilon,$ which is in contradiction with \eqref{eq_contradiction}. We conclude that $Y_{\tau^{\varepsilon}_S}\leq  \xi_{\tau^{\varepsilon}_S} + \varepsilon \text{ a.s.}$\\
Let us now prove statement $(ii)$. By definition of $\tau^{\varepsilon}_S$, we have: for a.e. $\omega\in\Omega $, for all $t\in[S(\omega), \tau^{\varepsilon}_S(\omega)[$, $Y_t(\omega)> \xi_t(\omega)  + \varepsilon.$  
Hence, for a.e. $\omega\in\Omega $, the function $t\mapsto A^c_t(\omega) $ is constant on $[S(\omega), 
\tau^{\varepsilon}_S(\omega)[$; by continuity of almost every trajectory of the process $A^c$, $A^c_\cdot(\omega)$ is constant on the closed interval $[S(\omega), 
\tau^{\varepsilon}_S(\omega)]$, for a.e. $\omega$. Furthermore, for a.e. $\omega\in\Omega $, the function  $t\mapsto A^d_t(\omega)$ is constant on $[S(\omega), \tau^{\varepsilon}_S(\omega)[.$
Moreover, $ Y_{ (\tau^{\varepsilon}_S)^-  } \geq \xi_{ (\tau^{\varepsilon}_S)^-  }+ \varepsilon\,$ a.s.\,, which implies that $\Delta A^d _ {\tau^{\varepsilon}_S  } =0$ a.s.\, Finally, for a.e. $\omega\in\Omega $, for all $t\in[S(\omega), \tau^{\varepsilon}_S(\omega)[$, $\Delta C_t(\omega)=C_t(\omega)-C_{t-}(\omega)=0$; 
therefore, for a.e. $\omega\in\Omega $, for all $t\in[S(\omega), \tau^{\varepsilon}_S(\omega)[$, $\Delta_+ C_{t-}(\omega)=C_t(\omega)-C_{t-}(\omega)=0$, which implies that, for a.e. $\omega\in\Omega$, the function $t\mapsto C_{t-}(\omega)$ is constant on  $[S(\omega), \tau^{\varepsilon}_S(\omega)[.$ By left-continuity of almost every trajectory of the process  $(C_{t-})$, we get that for a.e. $\omega\in\Omega$, the function $t\mapsto C_{t-}(\omega)$ is constant on the closed interval $[S(\omega), \tau^{\varepsilon}_S(\omega)]$. Thus, for a.e. $\omega\in\Omega$,  the map $t \mapsto A_t (\omega) + C_{t-} (\omega)$ is constant on $[S(\omega), \tau^{\varepsilon}_S(\omega)]$. Hence, $Y$ is the solution on $[S, 
\tau^{\varepsilon}_S]$ of the BSDE associated with driver $f$, terminal time $\tau^{\varepsilon}_S$ and terminal condition $Y_{\tau^{\varepsilon}_S}.$   We conclude by using Remark \ref{Remarque_prealable}.
\end{proof}
%
%
 With the help of the previous lemma, we derive the main result of this section.  
\begin{theorem}[Characterization theorem]\label{caracterisation}
Let $T >0$ be the terminal time.
 Let $(\xi_t, 0 
\leq t \leq T )$ be a left-limited r.u.s.c. 
process in ${\cal S}^2$ and let $f$ be a  predictable Lipschitz driver satisfying Assumption \ref{Royer}.
Let 
$(Y,Z,k,A,C)$ be the solution to the reflected BSDE with parameters $(f,\xi)$ as in Definition \ref{def_solution_RBSDE}.
\begin{description}
\item[(i)] For each stopping time $S$ $\in$ $\T_0$, we have
 \begin{equation}\label{prixam}
Y_S =  {\rm ess} \sup_{\tau \in \T_{S,T}}{\cal E}^f_{S, \tau}(\xi_{\tau}) \quad   \rm{a.s.}
\end{equation}
\item[(ii)] For each  $S \in \stopo$ and each $\varepsilon >0$, the stopping time  $\tau^{\varepsilon}_S $ 
defined by \eqref{eq_tau_epsilon_S} 
  is  {\em $ (L \varepsilon) $-optimal} for problem \eqref{prixam}, that is
 \begin{equation}\label{fifi}
 Y_S \,\,\leq \,\,  {\cal E}^f_{S, \tau^{\varepsilon}_S}(\xi_{\tau^{\varepsilon}_S})
 + L 
 \varepsilon \quad   \rm{a.s.}\,,
 \end{equation}
where $L$ is a constant which only depends on $T$ and the Lipschitz constant  $K$ of $f$.

 \end{description}

\end{theorem}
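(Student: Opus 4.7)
The plan is to prove \textbf{(ii)} first and then derive \textbf{(i)} by combining the $(L\varepsilon)$-optimality of $\tau^\varepsilon_S$ with a reverse inequality coming from the strong $\mathcal{E}^f$-supermartingale property of $Y$.

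First I would establish that $Y$ is a strong $\mathcal{E}^f$-supermartingale. Fixing $S,\tau \in \stopo$ with $S \le \tau$ a.s., the RBSDE \eqref{RBSDE} restricted to $[S,\tau]$ reads
\begin{equation*}
Y_S = Y_\tau + \int_S^\tau f(t, Y_t, Z_t)\,dt - \int_S^\tau Z_t\,dW_t + (A_\tau - A_S) + (C_{\tau-} - C_{S-}),
\end{equation*}
and the sum $(A_\tau - A_S) + (C_{\tau-} - C_{S-})$ is nonnegative. A comparison argument between $Y$ and the solution of the (non-reflected) BSDE on $[S,\tau]$ with driver $f$ and terminal condition $Y_\tau$, whose value at $S$ is by definition $\mathcal{E}^f_{S,\tau}(Y_\tau)$, then gives $Y_S \ge \mathcal{E}^f_{S,\tau}(Y_\tau)$ a.s. Combined with $Y_\tau \ge \xi_\tau$ a.s. (from \eqref{RBSDE_inequality_barrier}) and the monotonicity of $\mathcal{E}^f$, this yields $Y_S \ge \mathcal{E}^f_{S,\tau}(\xi_\tau)$ a.s. Taking the essential supremum over $\tau \in \T_{S,T}$ proves the inequality "$\ge$" in \eqref{prixam}.

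Next I would prove \eqref{fifi}. By Lemma \ref{lemma_epsilon_optimality}(ii), $Y$ is a strong $\mathcal{E}^f$-martingale on $[S, \tau^\varepsilon_S]$, so $Y_S = \mathcal{E}^f_{S, \tau^\varepsilon_S}(Y_{\tau^\varepsilon_S})$ a.s.; by Lemma \ref{lemma_epsilon_optimality}(i), $Y_{\tau^\varepsilon_S} \le \xi_{\tau^\varepsilon_S} + \varepsilon$ a.s. Monotonicity of $\mathcal{E}^f$ then gives $Y_S \le \mathcal{E}^f_{S, \tau^\varepsilon_S}(\xi_{\tau^\varepsilon_S} + \varepsilon)$ a.s. The standard translation estimate for Lipschitz BSDEs (obtained by a linearisation of $f$ between the two terminal conditions) yields $\mathcal{E}^f_{S,\tau}(\zeta + c) \le \mathcal{E}^f_{S,\tau}(\zeta) + L c$ for every nonnegative constant $c$, with $L$ depending only on $T$ and on the Lipschitz constant $K$ of $f$; applied with $\zeta = \xi_{\tau^\varepsilon_S}$, $c = \varepsilon$ and $\tau = \tau^\varepsilon_S$, this establishes \eqref{fifi}.

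To conclude \textbf{(i)}, I would combine \eqref{fifi} with the obvious bound $\mathcal{E}^f_{S, \tau^\varepsilon_S}(\xi_{\tau^\varepsilon_S}) \le \esssup_{\tau \in \T_{S,T}} \mathcal{E}^f_{S,\tau}(\xi_\tau)$ to obtain $Y_S \le \esssup_{\tau \in \T_{S,T}} \mathcal{E}^f_{S,\tau}(\xi_\tau) + L\varepsilon$ a.s.; letting $\varepsilon \downarrow 0$ and combining with the reverse inequality from the previous paragraph yields \eqref{prixam}. The main delicate step is the comparison argument showing that $Y$ is a strong $\mathcal{E}^f$-supermartingale: it amounts to comparing a BSDE perturbed by the nondecreasing process $A+C_-$, which has a continuous part, a predictable jump part (through $A$) and totally inaccessible jumps (through $C_-$), with its non-perturbed counterpart, and must be carried out carefully in the present non-right-continuous setting.
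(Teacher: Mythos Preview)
Your proposal is correct and follows essentially the same route as the paper: the upper bound in \eqref{prixam} comes from the strong $\mathcal{E}^f$-supermartingale property of $Y$ (proved in the paper as Proposition~\ref{compref} via the Gal'chouk--Lenglart formula), and the lower bound together with \eqref{fifi} comes from Lemma~\ref{lemma_epsilon_optimality} plus the BSDE continuity/translation estimate, exactly as you outline. One small terminological slip: in the present Brownian filtration the optional and predictable $\sigma$-algebras coincide, so the right jumps of $C_{\cdot-}$ are not ``totally inaccessible'' but occur at predictable times---the genuine difficulty is rather that $A+C_{\cdot-}$ is neither right- nor left-continuous, which is precisely what forces the use of the Gal'chouk--Lenglart change-of-variables in the comparison argument.
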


\begin{Remark}\label{Rmk_AA}
This result still holds when the assumption of existence of left limits for the process $\xi$ is relaxed (cf. also Remark \ref{Rmk_left_uppersemicontinuous envelope}). 
\end{Remark}
In the case where $\xi$ is right-continuous, we recover   Theorem 3.2 of \cite{QuenSul2}.  
\begin{proof}
Let $\varepsilon>0$ and let $\tau$ $\in$ $\T_{S,T}$. 
By Proposition \ref{compref}  in the Appendix,  the process $(Y_t)$ is a strong ${\cal E}^f$-supermartingale.  
Hence, for each $\tau \in \T_{S,T}$, we have 
$$Y_{S} \geq {\cal E}^f_{S ,\tau}(Y_{\tau})\geq {\cal E}^f_{S ,\tau}(\xi_{\tau})  \quad\text{ a.s.}\,, $$ 
where the second inequality follows from the inequality $Y \geq \xi$ and the monotonicity property of 
${\cal E}^f(\cdot)$ (with respect to terminal condition).
By taking the supremum over $\tau \in \T_{S,T}$, we get
 \begin{equation}\label{prixame}
Y_S  \geq  {\rm ess} \sup_{\tau \in \T_{S,T}} {\cal E}^f_{S ,\tau}(\xi_{\tau})  \quad \text{ a.s.}
\end{equation}
It remains to show the converse inequality. 
Due to part $(ii)$ of the previous Lemma \ref{lemma_epsilon_optimality} we have  
$Y_S =  {\cal E}^f_{S, \tau^{\varepsilon}_S}(Y_{\tau^{\varepsilon}_S}) \quad \text{ a.s. }$ 
%
%
%
From this equality, together with part $(i)$ of Lemma \ref{lemma_epsilon_optimality} and the monotonicity property of ${\cal E}^f(\cdot)$, we derive 
\begin{equation}\label{fi}
  Y_S =  {\cal E}^f_{S, \tau^{\varepsilon}_S}(Y_{\tau^{\varepsilon}_S})
    \leq  {\cal E}^f_{S, \tau^{\varepsilon}_S}(\xi_{\tau^{\varepsilon}_S}+ \varepsilon)  \leq 
     {\cal E}^f_{S, \tau^{\varepsilon}_S}(\xi_{\tau^{\varepsilon}_S})
 + L 
 \varepsilon \quad   \rm{a.s.},
\end{equation}
where the last inequality follows from the  estimates on BSDEs (cf. Proposition A.4 in \cite{QuenSul}). Inequality (\ref{fifi}) thus holds. 
From \eqref{fi} we also deduce
$Y_S \,\,\leq \,\, {\rm ess} \sup_{\tau \in \T_{S,T}}{\cal E}^f_{S ,\tau}(\xi_{\tau}) + L \varepsilon $    a.s.
As $\varepsilon$ is an arbitrary positive number, we get 
$
Y_S \,\, \leq  \,\,{\rm ess} \sup_{\tau \in \T_{S,T}}{\cal E}^f_{S ,\tau}(\xi_{\tau})$  
a.s.\,\,
By \eqref{prixame} this inequality is an equality.
\end{proof}

We  now investigate the question of the existence of optimal stopping times for the optimal stopping  problem \eqref{prixam}. We first provide an optimality criterion for the problem \eqref{prixam}.
%

%
\begin{Proposition}[Optimality criterion] \label{optcri}
Let $(\xi_t, 0 
\leq t \leq T )$ be a left-limited r.u.s.c.
 process in ${\cal S}^2$ and let $f$ be a predictable Lipschitz driver satisfying Assumption \ref{Royer}. 
Let $S \in \stopo$ and  $\hat{\tau} \in  \T_{S,T}.$ 
If $Y$ is a strong ${\cal E}^f$-martingale on $[S,{\hat \tau}]$ with 
$Y_{\hat \tau}= \xi_{\hat \tau}$ a.s., then the stopping time $\hat{\tau}$ is $S$-optimal (i.e. $Y_S =  {\cal E}^f_{S, \hat \tau}(\xi_{\hat \tau})$ a.s.). The converse statement also holds true,  if, in addition, the  inequality from  Assumption \ref{Royer} is strict 
(that is,   $\theta_t^{y,z, \mathpzc{k}_1,\mathpzc{k}_2} > -1$).   
\end{Proposition}

\begin{proof} The first claim is immediate.
Let us prove the second (and last) claim.
Assume the strict inequality in Assumption \ref{Royer}.   Let $\hat{\tau}$ be $S$-optimal, i.e. $Y_S =  {\cal E}^f_{S, \hat \tau}(\xi_{\hat \tau})$ a.s.
Since by Theorem \ref{caracterisation} and by Proposition \ref{compref}, $Y$ is a strong ${\cal E}^f$-supermartingale, we have
$$Y_S \geq  {\cal E}^f_{S, \hat \tau}(Y_{\hat \tau}) \geq {\cal E}^f_{S, \hat \tau}(\xi_{\hat \tau})= Y_S\quad a.s.\,, $$ 
where the last inequality holds because $Y \geq \xi$.
It follows that $Y_S =  {\cal E}^f_{S, \hat \tau}(Y_{\hat \tau})$ a.s.
Since $\theta_t^{y,z, \mathpzc{k}_1,\mathpzc{k}_2} > -1$, Property \ref{Pty-strong} can be applied, which yields that
 $Y$ is 
  a strong ${\cal E}^f$-martingale on $[S,{\hat \tau}]$. Moreover, since ${\cal E}^f_{S, \hat \tau}(Y_{\hat \tau}) = {\cal E}^f_{S, \hat \tau}(\xi_{\hat \tau})$ 
a.s.\, with $Y_{\hat \tau} \geq \xi_{\hat \tau}$ a.s.\,, the strict monotonicity of ${\cal E}^f$ 
implies that $Y_{\hat \tau} = \xi_{\hat \tau}$ a.s.\,     
\end{proof}
We note that, even in the case where $\xi$ is right-continuous, the large inequality   $\theta_t^{y,z, \mathpzc{k}_1,\mathpzc{k}_2} \geq -1$ from Assumption \ref{Royer} is not sufficient for  the last statement of the above proposition to hold true; a counter-example is given in the Appendix (cf. Example \ref{exe}).  

In Theorem \ref{caracterisation} (ii), we have shown the existence of an $L\varepsilon$-optimal stopping time for problem \eqref{problem_at_time_0}. Under an additional assumption of left upper-semicontinuity  along stopping times of the process $\xi$, we will show the existence of an optimal stopping time. 
To this purpose, we first give a lemma which is to be compared with Lemma \ref{lemma_epsilon_optimality}.
\begin{Lemma}\label{lemma_optimality}
Let $f$ be a predictable Lipschitz driver satisfying Assumption \ref{Royer}.
Let $(\xi_t, 0 
\leq t \leq T )$ be a left-limited r.u.s.c. process in ${\cal S}^2$ which we assume also to be 
l.u.s.c. along stopping times. Let 
$(Y,Z,k,A,C)$ be the solution to the reflected BSDE with parameters $(f,\xi)$.
Let $S \in  \T_{0,T}.$ We define $\tau_S^*$  by
\begin{equation}\label{eq_lemma_optimality}
\tau^*_S := \inf \{u \geq S\colon Y_u = \xi_u \}.
\end{equation} 
The following assertions hold:
\begin{description}
\item[(i)] $Y_{\tau^*_{S}} = \xi_{\tau^*_{S}}$ a.s.
\item[(ii)] The process $Y$ is a strong $\mathcal{E}^f$-martingale on $[S,\tau^*_S]$.  
\end{description}
\end{Lemma}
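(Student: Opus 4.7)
The overall plan is to deduce the lemma from its $\varepsilon$-approximate counterpart, Lemma \ref{lemma_epsilon_optimality}, by letting $\varepsilon \downarrow 0$. The left upper-semicontinuity of $\xi$, not needed for Lemma \ref{lemma_epsilon_optimality}, enters at two places: first, to pass to the limit in part (i), and second -- the main obstacle of the proof -- to rule out a predictable jump of the Mertens process $A$ at the endpoint $\tau^*_S$ in part (ii).

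\emph{Part (i).} Since $\{Y_t \leq \xi_t\} \subseteq \{Y_t \leq \xi_t + \varepsilon\}$, the family $(\tau^\varepsilon_S)$ is non-decreasing as $\varepsilon \downarrow 0$ and bounded above by $\tau^*_S$; set $\tau' := \lim_{\varepsilon \downarrow 0} \tau^\varepsilon_S \leq \tau^*_S$. It is enough to prove $Y_{\tau'} = \xi_{\tau'}$, for then $\tau'$ belongs to the set defining $\tau^*_S$ and $\tau^*_S = \tau'$. On the event where $\tau^\varepsilon_S = \tau'$ for some (hence all smaller) $\varepsilon > 0$, Lemma \ref{lemma_epsilon_optimality}(i) directly gives $Y_{\tau'} \leq \xi_{\tau'} + \varepsilon$ for arbitrarily small $\varepsilon$, hence $Y_{\tau'} \leq \xi_{\tau'}$. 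On the complementary event, $\tau^\varepsilon_S \uparrow \tau'$ strictly, so $\tau'$ is an announced (predictable) stopping time and $Y_{\tau^\varepsilon_S} \to Y_{\tau'-}$, $\xi_{\tau^\varepsilon_S} \to \xi_{\tau'-}$ as $\varepsilon \downarrow 0$. The Mertens decomposition of the strong supermartingale $Y + \int_0^{\cdot} f(s,Y_s,Z_s)\,ds$ (cf. Remark \ref{Rmk_def_RBSDE_strong_supermartingale}) yields $Y_{\tau'-} - Y_{\tau'} = \Delta A^d_{\tau'} \geq 0$, while the l.u.s.c. assumption yields $\xi_{\tau'-} \leq \xi_{\tau'}$. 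Chaining inequalities gives $Y_{\tau'} \leq Y_{\tau'-} \leq \xi_{\tau'-} \leq \xi_{\tau'}$, and combining with $Y \geq \xi$ yields $Y_{\tau'} = \xi_{\tau'}$.

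\emph{Part (ii).} Following the scheme of Lemma \ref{lemma_epsilon_optimality}(ii), I would show that $A$ and $C_{\cdot -}$ are constant on $[S, \tau^*_S]$; equation \eqref{RBSDE} then collapses on that interval to the BSDE with driver $f$, terminal time $\tau^*_S$ and terminal value $Y_{\tau^*_S}$, and the strong $\mathcal{E}^f$-martingale property follows from Remark \ref{Remarque_prealable}. On the stochastic interval $[S, \tau^*_S[$ the strict inequality $Y_t > \xi_t$ holds by the very definition of $\tau^*_S$. Properties \eqref{RBSDE_A} and \eqref{RBSDE_C} then immediately imply that $A^c$ is constant on $[S, \tau^*_S[$ (hence on $[S, \tau^*_S]$ by continuity), that $\Delta C_s = 0$ for every $s \in [S, \tau^*_S[$ (so that $C_{(\tau^*_S)-} = C_{S-}$), and that $\Delta A^d_\sigma = 0$ for every predictable stopping time $\sigma \in [S, \tau^*_S[$ (using $Y_{\sigma-} \geq Y_\sigma > \xi_\sigma \geq \xi_{\sigma-}$, with the outer inequalities coming from Mertens and from the l.u.s.c. assumption, respectively).

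The delicate step is to control the predictable jump of $A$ at $\tau^*_S$ itself, where the strict inequality $Y_t > \xi_t$ is lost. I argue by contradiction: suppose $\Delta A^d_{\tau^*_S} > 0$ on a set of positive measure; applying \eqref{RBSDE_A} to a sequence of predictable stopping times exhausting the jumps of $A^d$ yields $Y_{(\tau^*_S)-} = \xi_{(\tau^*_S)-}$ on that set, while the Mertens identity gives $Y_{(\tau^*_S)-} = Y_{\tau^*_S} + \Delta A^d_{\tau^*_S} > Y_{\tau^*_S}$. Together with part (i), which gives $Y_{\tau^*_S} = \xi_{\tau^*_S}$, this forces $\xi_{(\tau^*_S)-} > \xi_{\tau^*_S}$, contradicting the left upper-semicontinuity of $\xi$ along stopping times. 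Hence $\Delta A^d_{\tau^*_S} = 0$, and combined with the previous paragraph this yields $A_{\tau^*_S} = A_S$ and $C_{(\tau^*_S)-} = C_{S-}$, finishing the proof.
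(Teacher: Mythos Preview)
Your proof is correct but takes a more hands-on route than the paper in two respects.

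For part (i), the paper argues directly, exactly as in Lemma \ref{lemma_epsilon_optimality}(i) with $\varepsilon=0$: if $Y_{\tau^*_S}>\xi_{\tau^*_S}$ on a set of positive probability, then $\Delta C_{\tau^*_S}=0$ there, so $Y_{\tau^*_S}=Y_{(\tau^*_S)+}$; choosing $t_n\downarrow\tau^*_S$ with $Y_{t_n}=\xi_{t_n}$ and invoking only the right upper-semicontinuity of $\xi$ gives a contradiction. Your limiting argument through $\tau^\varepsilon_S$ also works, but it consumes the l.u.s.c.\ hypothesis already at this stage, whereas the paper reserves l.u.s.c.\ entirely for part (ii).

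For part (ii), the paper's key shortcut is to invoke Theorem \ref{exiuni} (equivalently Remark \ref{Rmk_A_continuous}): when $\xi$ is l.u.s.c.\ along stopping times, the process $A$ is \emph{continuous}, so $A^d\equiv 0$. The entire analysis of predictable jumps of $A$ on $[S,\tau^*_S[$ and at $\tau^*_S$ then disappears, and one is left with the easy observations that $A^c$ and $C_{\cdot-}$ are constant on $[S,\tau^*_S]$, exactly as in Lemma \ref{lemma_epsilon_optimality}(ii). Your approach avoids this structural result and instead controls the jumps of $A^d$ directly via the Skorokhod condition \eqref{RBSDE_A} combined with l.u.s.c.; this is valid and self-contained, but longer. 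What it buys you is independence from the continuity result for $A$; what the paper's route buys is a two-line proof.
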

\begin{proof}
To prove the first statement we note that $ Y_{\tau^*_{S}} \geq \xi_{\tau^*_{S}}$ a.s., since $Y$ is (the first component of) the solution to the RBSDE with barrier $\xi$. We show that $ Y_{\tau^*_{S}} \leq \xi_{\tau^*_{S}}$ a.s.  by using the assumption of right-upper semicontinuity on the process $\xi$; the  arguments are similar to those used in the proof of part $(i)$ of Lemma \ref{lemma_epsilon_optimality} and are left to the reader.\\
Let us prove the second statement.  By definition of $\tau^*_S$, we have that for a.e. $\omega\in\Omega,$   $Y_t(\omega) >\xi_t(\omega)$ on  $[S(\omega), \tau^*_S(\omega)[$; hence, for a.e. $\omega$, the trajectory   $A^c(\omega)$ is constant on $[S(\omega), \tau^*_S(\omega)[$ and even on the closed interval $[S(\omega), \tau^*_S(\omega)]$ due to the continuity.  On the other hand, due to the assumption of l.u.s.c. along stopping times on the process $\xi$, we have $A(\omega)=A^c(\omega)$ for a.e. $\omega$ 
(see Theorem \ref{exiuni}). Thus, for a.e. $\omega$, $A(\omega)$ is constant on $[S(\omega), \tau^*_S(\omega)].$ We show that $C_{t-}(\omega)$ is constant on $[S(\omega), \tau^*_S(\omega)]$ by the same arguments as those of the proof of part $(ii)$ of Lemma \ref{lemma_epsilon_optimality}. We conclude by using Remark \ref{Remarque_prealable}.
\end{proof}

\begin{Remark} \label{sautA}
We see from the above proof that the assumption of l.u.s.c. of $\xi$ in Lemma \ref{lemma_optimality} can be replaced by the assumption $\Delta A_{\tau_S^*}=0$. The assumption $\Delta A_{\tau_S^*}=0$ is weaker than the assumption of l.u.s.c. of $\xi$ as illustrated in  Example  \ref{toy2}  of the Appendix.   
\end{Remark}
By the previous lemma 
and the  first statement ("the optimality criterion")  from Proposition \ref{optcri}, we derive the following existence result.
\begin{Proposition}\label{exist}
Let $f$ be a predictable Lipschitz driver satisfying Assumption \ref{Royer}.
Let $(\xi_t, 0 
\leq t \leq T )$ be a left-limited r.u.s.c. process in ${\cal S}^2$ which we assume also to be 
l.u.s.c. along stopping times. 
Let $S \in  \T_{0,T}$. The stopping time $\tau_S^*$ defined in \eqref{eq_lemma_optimality} 
is optimal for problem \eqref{prixam}, that is 
$Y_S =  {\rm ess} \sup_{\tau \in \T_{S,T}}{\cal E}^f_{S, \tau}(\xi_{\tau}) = {\cal E}^f_{S,  \tau^*_S}
(\xi_{\tau^*_S})$ a.s.\,

\end{Proposition}
\begin{Remark}  
We note that, due to Remark \ref{sautA} and to the optimality criterion,  the optimality of $\tau_S^*$ 
in the above proposition  still holds if we relax the assumption of l.u.s.c. of $\xi$ to the (weaker) assumption  
$\Delta A_{\tau_S^*} =0$ a.s. We recall that, by Remark \ref{Rmk_the_jumps_of_C}, the condition 
$\Delta A_{\tau_S^*} =0$ a.s. is equivalent to  $Y$ being  left-continuous along stopping times at $\tau_S^*$. If the condition $\Delta A_{\tau_S^*} =0$ a.s. is violated, the stopping time $\tau_S^*$ might not be optimal (cf. Example \ref{toy2} from the Appendix). 


 \end{Remark}

%

We show the following property.  
\begin{Proposition}\label{snell}
Let $T >0$ be the terminal time.
 Let $(\xi_t, 0 
\leq t \leq T )$ be a left-limited r.u.s.c. 
process in ${\cal S}^2$
 and let $f$ be a predictable Lipschitz driver satisfying Assumption \ref{Royer}.
Let $(Y,Z,k,A,C)$ be the solution to the reflected BSDE with parameters $(\xi,f)$ as in Definition \ref{def_solution_RBSDE}.
The process $Y$ is the ${\cal E}^f$-{\em Snell envelope} of $\xi$, 
that is, \emph{the smallest strong 
${\cal E}^f$-supermartingale greater than or equal to $\xi$.}
\end{Proposition}
\begin{Remark}\label{Rmk_AB}
This result still holds when $\xi$ is not left-limited 
(cf. Remarks \ref{Rmk_left_uppersemicontinuous envelope} and \ref{Rmk_AA}).
\end{Remark}
From Proposition \ref{snell} and Theorem \ref{caracterisation}, we deduce that the "value process" of the   optimal stopping problem  \eqref{vvv} is characterized as the ${\cal E}^f$-{\em Snell envelope}
of the reward process $\xi$. 
In the particular case of a classical (linear) expectation (that is, when $f=0$), we recover a characterization from the classical optimal stopping theory stating that the "value process" of the  "classical" linear optimal stopping problem coincides with the {\em Snell envelope} of $\xi$, which is smallest strong 
supermartingale greater than or equal to $\xi$  (cf, e.g., 
\cite{ALM}).
\begin{proof}
By Proposition \ref{compref} in the Appendix, the process $Y$ is a strong 
${\cal E}^f$-supermartingale. Moreover, since $Y$ is (the first component of) the solution to the reflected BSDE with parameters $(f,\xi)$, it is  greater than or equal to $\xi$ (cf. Def. \ref{def_solution_RBSDE}). \\
It remains to show the minimality property. 
Let $Y'$ be another ${\cal E}^f$-supermartingale greater than or equal to $\xi$.  Let $S \in\stopo$.
For each $\tau \in \T_{S,T}$, we have 
$Y'_{S}$ $ \geq $ ${\cal E}^f_{S ,\tau}(Y'_{\tau})$ $\geq$ $ {\cal E}^f_{S ,\tau}(\xi_{\tau})$ a.s.\,,
where the second inequality follows from the inequality $Y' \geq \xi$ and the monotonicity property of 
${\cal E}^f$ with respect to the terminal condition.
By taking the supremum over $\tau \in \T_{S,T}$, we get
%
$Y'_S$ $ \geq$  ${\rm ess} \sup_{\tau \in \T_{S,T}} {\cal E}^f_{S ,\tau}(\xi_{\tau})$ $=$ $Y_S$ a.s.\,,
where the last equality follows from Theorem \ref{caracterisation}. 
The desired result follows.
\end{proof}

\section{Additional results}\label{sec_add}
\subsection{${\cal E}^f$-Mertens decomposition of ${\cal E}^f$-strong supermartingales}\label{subsec_Mertens}

We now  show an ${\cal E}^f$-Mertens decomposition for ${\cal E}^f$-strong supermartingales, which generalizes  Mertens decomposition to the case of $f$-expectations.
We first show the following lemma.
\begin{lemma}\label{rusc}
Let $(Y_t)$ $\in\mathcal{S}^2$  be a strong ${\cal E}^f$-supermartingale (resp. ${\cal E}^f$-submartingale).
Then, $(Y_t)$ is right upper-semicontinuous  (resp. right lower-semicontinuous). 
\end{lemma}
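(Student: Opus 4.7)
The plan is to exploit the strong $\mathcal{E}^f$-supermartingale inequality along stopping times decreasing to a given $\tau$, and to compare the nonlinear $f$-expectation with an ordinary $\Fc_\tau$-conditional expectation in the limit; right upper-semicontinuity along stopping times will then transfer to pathwise r.u.s.c.\ via an optional section argument.

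First, I would fix $\tau\in\stopo$ and a non-increasing sequence $(\tau_n)$ in $\stopo$ with $\tau_n\downarrow\tau$ a.s. The strong $\mathcal{E}^f$-supermartingale hypothesis gives $Y_\tau\geq \mathcal{E}^f_{\tau,\tau_n}(Y_{\tau_n})$ a.s. Letting $(u^n,v^n)\in\mathcal{S}^2\times\H^2$ denote the unique solution of the BSDE on $[\tau,\tau_n]$ with terminal condition $Y_{\tau_n}$ and driver $f$ (so that $u^n_\tau=\mathcal{E}^f_{\tau,\tau_n}(Y_{\tau_n})$), taking $\Fc_\tau$-conditional expectation in the defining BSDE identity and using $v^n\in\H^2$ to kill the Brownian stochastic integral yields
\[
u^n_\tau=E[Y_{\tau_n}\mid\Fc_\tau]+\varepsilon^n_\tau,\qquad \varepsilon^n_\tau:=E\Bigl[\int_\tau^{\tau_n}f(s,u^n_s,v^n_s)\,ds\,\Big|\,\Fc_\tau\Bigr].
\]

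The key estimate is $\varepsilon^n_\tau\to 0$ in $L^1$. By standard a priori BSDE bounds, the $\mathcal{S}^2$- and $\H^2$-norms of $(u^n,v^n)$ on $[\tau,\tau_n]$ are controlled uniformly in $n$ by $\|Y_{\tau_n}\|_{L^2}\leq\vertiii{Y}_{\mathcal{S}^2}$ and $\|f(\cdot,0,0)\|_{\H^2}$; the Lipschitz property of $f$ combined with Cauchy--Schwarz in time on the shrinking interval $[\tau,\tau_n]$ then yields $E[\int_\tau^{\tau_n}|f(s,u^n_s,v^n_s)|\,ds]\leq\sqrt{E[\tau_n-\tau]}\cdot C$, which tends to zero since $\tau_n\downarrow\tau$ in $L^1$. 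Extracting an almost-sure subsequence $(n_k)$ for $\varepsilon^n_\tau\to 0$, we obtain $Y_\tau\geq E[Y_{\tau_{n_k}}\mid\Fc_\tau]+\varepsilon^{n_k}_\tau$ a.s.

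To conclude, I would use that the Brownian filtration is right-continuous, so $\Fc_{\tau+}=\Fc_\tau$, and that $|Y_{\tau_n}|\leq X:=\esssup_{\sigma\in\stopo}|Y_\sigma|\in L^2$. A subsequence/diagonal argument passing to further a.s.-convergent subsequences of $(Y_{\tau_{n_k}})$, dominated convergence for conditional expectations, and $\Fc_\tau$-measurability of $\limsup_n Y_{\tau_n}$ (which is $\Fc_{\tau+}$-measurable as the descending $\sigma$-fields $\Fc_{\tau_n}$ intersect to $\Fc_{\tau+}=\Fc_\tau$) together give $Y_\tau\geq\limsup_n Y_{\tau_n}$ a.s. Right upper-semicontinuity of trajectories then follows from the optional section theorem applied to the optional set $\{Y^+>Y\}$, where $Y^+_t:=\limsup_{s\downarrow t,\,s\in\Q}Y_s$. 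The submartingale case is symmetric: $-Y$ is a strong $\mathcal{E}^{\bar f}$-supermartingale for the Lipschitz driver $\bar f(t,y,z):=-f(t,-y,-z)$, and right lower-semicontinuity of $Y$ follows from right upper-semicontinuity of $-Y$.

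The main obstacle is this last passage to the limit: the nonlinear $f$-expectation does not commute with limits, so the argument must split off the linear (conditional-expectation) part from the driver part and use the shrinking time interval $[\tau,\tau_n]$ to render the latter negligible in $L^1$, while the domination by $X\in L^2$ together with right-continuity of the Brownian filtration is what ultimately allows the pathwise conclusion.
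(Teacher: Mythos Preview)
Your overall strategy---splitting $\mathcal{E}^f_{\tau,\tau_n}(Y_{\tau_n})$ into $E[Y_{\tau_n}\mid\Fc_\tau]$ plus a driver remainder $\varepsilon^n_\tau$ and showing $\varepsilon^n_\tau\to 0$ in $L^1$---is sound, and the $L^1$ estimate on $\varepsilon^n_\tau$ is correct. The gap is in the passage to the limit. From $Y_\tau\geq E[Y_{\tau_{n_k}}\mid\Fc_\tau]+\varepsilon^{n_k}_\tau$ you only get $Y_\tau\geq\limsup_k E[Y_{\tau_{n_k}}\mid\Fc_\tau]$ a.s.; conditional reverse Fatou gives $E[\limsup_k Y_{\tau_{n_k}}\mid\Fc_\tau]\geq\limsup_k E[Y_{\tau_{n_k}}\mid\Fc_\tau]$, which is the wrong direction for what you want, even after you use $\Fc_\tau$-measurability of $\limsup_k Y_{\tau_{n_k}}$. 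Your proposed ``subsequence/diagonal'' fix---extracting an a.s.\ convergent further subsequence of $(Y_{\tau_{n_k}})$---is not available: domination by $X\in L^2$ does not force the existence of an a.s.\ convergent subsequence (think of i.i.d.\ bounded random variables). So as written the argument does not yield $Y_\tau\geq\limsup_n Y_{\tau_n}$.

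The repair is to invoke the Dellacherie--Lenglart criterion for right upper-semicontinuity of an optional process (\cite[Prop.~2, p.~300]{DelLen2}): it suffices to check $Y_\tau\geq\lim_n Y_{\tau_n}$ a.s.\ only for nonincreasing sequences $(\tau_n)$ with $\tau_n\downarrow\tau$, $\tau_n>\tau$ on $\{\tau<T\}$, \emph{for which $\lim_n Y_{\tau_n}$ exists a.s.} Under that hypothesis your argument goes through cleanly: along $(n_k)$ one has $Y_{\tau_{n_k}}\to\lim_n Y_{\tau_n}$ a.s., dominated convergence gives $E[Y_{\tau_{n_k}}\mid\Fc_\tau]\to E[\lim_n Y_{\tau_n}\mid\Fc_\tau]=\lim_n Y_{\tau_n}$, and the desired inequality follows. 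This is also how the paper closes the argument; your ad hoc optional-section step on $\{Y^+>Y\}$ with $Y^+_t=\limsup_{s\downarrow t,\,s\in\Q}Y_s$ is both unnecessary and delicate (optionality of $Y^+$ and the link between pathwise $\limsup$ at rationals and stopping-time sequences would need separate justification).

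For comparison, the paper's proof is shorter: rather than linearising, it observes that the $\mathcal{E}^f$-supermartingale property together with consistency of $\mathcal{E}^f$ makes the sequence $(\mathcal{E}^f_{\tau,\tau_n}(Y_{\tau_n}))_n$ \emph{nondecreasing}, hence a.s.\ convergent with limit $\leq Y_\tau$; a continuity result for BSDEs in terminal time and terminal condition (\cite[Prop.~A.6]{QuenSul}) then identifies this limit as $\mathcal{E}^f_{\tau,\tau}(\lim_n Y_{\tau_n})=\lim_n Y_{\tau_n}$. Your approach trades that black-box continuity result for an explicit $L^1$ estimate on the driver term, which is more hands-on; but both routes ultimately need the Dellacherie--Lenglart reduction to sequences along which $Y_{\tau_n}$ converges.
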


\begin{proof} Suppose that $(Y_t)$ is a 
strong ${\cal E}^f$-supermartingale.
Let $\tau \in \stopo$ and let $(\tau_n)$ be a nonincreasing sequence of stopping times with $\lim_{n \rightarrow + \infty} \tau_n = \tau$ a.s. and  for all $n\in\N$, $\tau_n > \tau$ a.s. on 
$\{\tau < T\}$. Suppose that $\lim_{n \rightarrow + \infty} Y_{\tau_n}$ exists a.s.\,
The random variable $\lim_{n \rightarrow + \infty} Y_{\tau_n}$ is $\cf_\tau$-measurable as the filtration is right-continuous. 
Let us show that 
$$Y_{\tau} \geq \lim_{n \rightarrow + \infty} Y_{\tau_n} \quad {\rm a.s.}$$
Since $(Y_t)$ is a strong ${\cal E}^f$-supermartingale and the sequence $(\tau_n)$ is nonincreasing, we have, for  
all $n\in\N$,
${\cal E}^f_{\tau ,\tau_n}(Y_{\tau_n})\leq {\cal E}^f_{\tau ,\tau_{n+1}}(Y_{\tau_{n+1}})\leq Y_\tau.$ We deduce that the sequence of random variables $({\cal E}^f_{\tau ,\tau_n}(Y_{\tau_n}))_{n \in\N}$   is nondecreasing (hence, converges a.s.) and its limit (in the a.s. sense) satisfies $Y_{\tau} \geq \lim_{n \rightarrow + \infty} \uparrow {\cal E}^f_{\tau ,\tau_n}(Y_{\tau_n})$ a.s.\, This observation, combined with the continuity property of BSDEs with respect to terminal time and terminal condition (cf.   \cite[Prop. A.6]{QuenSul}) gives
$$Y_{\tau} \geq \lim_{n \rightarrow + \infty}  {\cal E}^f_{\tau ,\tau_n}(Y_{\tau_n})= {\cal E}^f_{\tau ,\tau}(\lim_{n \rightarrow + \infty} Y_{\tau_n})= \lim_{n \rightarrow + \infty} Y_{\tau_n}\quad {\rm a.s.}$$ 
This result, together with a result of the general theory of processes (cf. \cite[Prop. 2, page 300]{DelLen2}), ensures that the optional process $(Y_t)$ is right-upper semicontinuous.\\
\end{proof}


 \begin{theorem}[${\cal E}^f$-Mertens decomposition]\label{Thm_Mer}
Let $(Y_t)$ be a process  in ${\cal S}^2$. Let $f$ be a predictable Lipschitz driver satisfying Assumption \ref{Royer}.
The process  $(Y_t)$ is a strong ${\cal E}^f$-supermartingale (resp. ${\cal E}^f$-submartingale) if and only if 
there exists a nondecreasing (resp. nonincreasing) right-continuous predictable process $A$ in ${\cal S}^2$ with $A_0=0$ and 
a nondecreasing (resp. nonincreasing) right-continuous adapted purely discontinuous process $C$ in ${\cal S}^2$ with $C_{0-}=0$, as well as  two processes $Z \in   \H^2 $ and $k \in  \mathbb{H}^2_\nu$,  such that
  %
  %
a.s. for all $t \in [0,T]$,
  %
  %
 $$
 Y_{t}= Y_{T}+ \int_{t} ^T f(s,Y_{s}, Z_{s},k_s)ds + A_T - A_{t} + C_{T-}- C_{t-}- \int_{t} ^T Z_{s}dW_{s}-\int_{t} ^T \int_E k_{s}(e)\tilde{N}(ds,de).
$$
This decomposition is unique.
\end{theorem}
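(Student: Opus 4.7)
I would split the argument into three pieces: an easy ``if'' direction, the main ``only if'' direction for the supermartingale case (which uses the RBSDE machinery of Section \ref{sec3} with $\xi := Y$), and a reduction of the submartingale case to the supermartingale case by a sign change.

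\textbf{Sufficient direction.} Assume $Y$ admits the stated decomposition with nondecreasing processes $A$, $C$, and set $K_t := A_t + C_{t^-}$, which is nondecreasing. Fix $S\leq\tau$ in $\stopo$ and let $(\bar Y,\bar Z)$ be the solution of the (non-reflected) BSDE with driver $f$, terminal time $\tau$, and terminal condition $Y_\tau$, so that $\bar Y_S = \mathcal{E}^f_{S,\tau}(Y_\tau)$. Writing both equations between $S$ and $\tau$ and subtracting produces a relation for $Y-\bar Y$ in which an additional nondecreasing term $K_\tau - K_S\geq 0$ appears; the standard linearization-and-change-of-measure argument for Lipschitz BSDEs then gives $Y_S\geq \bar Y_S = \mathcal{E}^f_{S,\tau}(Y_\tau)$ a.s., which is precisely the strong $\mathcal{E}^f$-supermartingale property.

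\textbf{Necessary direction, supermartingale case.} Let $Y\in\mathcal{S}^2$ be a strong $\mathcal{E}^f$-supermartingale. By Lemma \ref{rusc}, $Y$ is right upper-semicontinuous. Take $\xi:=Y$ as obstacle; invoking Remark \ref{Rmk_left_uppersemicontinuous envelope} (and Remark \ref{Rmk_AA}), the RBSDE theory applies to this $\xi$ even though we do not assume that $Y$ has left limits. Let $(\bar Y,\bar Z,\bar A,\bar C)$ be the solution from Theorem \ref{exiuni}. By the characterization Theorem \ref{caracterisation},
\begin{equation*}
\bar Y_S \;=\; \esssup_{\tau\in\ct_{S,T}} \mathcal{E}^f_{S,\tau}(Y_\tau)\quad\text{a.s., for every } S\in\stopo.
\end{equation*}
The supermartingale hypothesis gives $\mathcal{E}^f_{S,\tau}(Y_\tau)\leq Y_S$ for every $\tau\in\ct_{S,T}$, hence $\bar Y_S\leq Y_S$ a.s.; conversely, the choice $\tau = S$ yields $\bar Y_S\geq \mathcal{E}^f_{S,S}(Y_S)=Y_S$. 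Thus $\bar Y_S = Y_S$ a.s.\ for every stopping time $S$, and the optional section theorem makes $\bar Y$ and $Y$ indistinguishable. Reading off equation \eqref{RBSDE} with $\bar Y=Y$ produces the desired decomposition with $Z:=\bar Z\in\H^2$, $A:=\bar A$, $C:=\bar C$ in $\mathcal{S}^2$; the flat-off constraints \eqref{RBSDE_A} and \eqref{RBSDE_C} are trivially fulfilled because $\xi = Y$.

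\textbf{Submartingale case and main obstacle.} Define $\tilde f(t,y,z):=-f(t,-y,-z)$, which is a Lipschitz driver with the same constant as $f$. A direct uniqueness check for BSDEs gives the duality $\mathcal{E}^{\tilde f}_{S,\tau}(-\eta) = -\mathcal{E}^f_{S,\tau}(\eta)$, so a strong $\mathcal{E}^f$-submartingale $Y$ corresponds to a strong $\mathcal{E}^{\tilde f}$-supermartingale $-Y\in\mathcal{S}^2$. Apply the supermartingale case to $(-Y,\tilde f)$, then translate the resulting decomposition back to $Y$ and $f$; setting the increasing processes to be the negatives of those obtained gives nonincreasing predictable $A$ and nonincreasing purely discontinuous adapted $C$ as required. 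The delicate point throughout is ensuring that the obstacle $\xi = Y$ (resp.\ $-Y$) is an admissible obstacle for the RBSDE: Lemma \ref{rusc} supplies right upper-semicontinuity, and Remarks \ref{Rmk_left_uppersemicontinuous envelope}--\ref{Rmk_AA} dispense with the need for left limits, so no ad hoc regularization of $Y$ is required.
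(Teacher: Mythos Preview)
Your proposal is correct and follows essentially the same route as the paper: the ``if'' direction is Proposition \ref{compref}, and for the ``only if'' direction you use Lemma \ref{rusc} to obtain right upper-semicontinuity of $Y$, take $\xi:=Y$ as obstacle, and identify $Y$ with the first component of the RBSDE solution via Theorem \ref{caracterisation} together with Remark \ref{Rmk_AA}. Your explicit sign-change reduction $\tilde f(t,y,z):=-f(t,-y,-z)$ for the submartingale case is a welcome detail that the paper leaves as an implicit ``resp.''
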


\begin{proof} The "if part"  has been shown in Proposition \ref{compref} of the Appendix. Let us show the "only if" part. Suppose that $(Y_t)$ is a 
strong ${\cal E}^f$-supermartingale. Hence, $(Y_t)$ is clearly the ${\cal E}^f$-Snell envelope of $(Y_t)$, that is the 
smallest strong ${\cal E}^f$-supermartingale 
greater or equal to $(Y_t)$. By the characterization of the solution of a reflected BSDE as the ${\cal E}^f$-Snell envelope of the obstacle 
process (cf. Proposition \ref{snell} and Remark \ref{Rmk_AB}), we derive that the process $(Y_t)$ coincides with the solution of the reflected BSDE associated with the obstacle $(Y_t)$ (which is r.u.s.c. by Lemma \ref{rusc}). The desired conclusion follows.\\
 The uniqueness of the processes $Z$, $k$, $A$, $C$ of the decomposition follows from the uniqueness of the solution of the reflected BSDE.
\end{proof}

When $Y$ is right-continuous, the process $C$ of the ${\cal E}^f$-Mertens decomposition is equal to $0$. 
In this case, the previous theorem reduces to the so-called 
${\cal E}^f$-Doob-Meyer decomposition  (cf. Proposition A.6 in \cite{DQS2}; cf. also \cite{R} and \cite{Peng_Doob_Meyer}). \\
Through different techniques, a similar result to the above Theorem \ref{Thm_Mer} has been established in the recent paper \cite{Bouchard} (in the Brownian framework).\\
   
\begin{Remark}
It follows from the previous theorem that strong $\mathcal{E}^f$-supermartingales and strong $\mathcal{E}^f$-submartingales have left and right limits.
\end{Remark}
\subsection{Comparison theorem for RBSDEs }\label{subsec_comp}
\begin{theorem}[Comparison]\label{thmcomprbsde}
\label{comparison} Let $\xi^{1}$, $\xi^{2}$  be two obstacles. Let $f^1$and $f^2$ 
 be  predictable Lipschitz drivers   satisfying Assumption~\ref{Royer}. Let $(Y^{i}, Z^{i}, k^i,  A^i, C^i)$  be
the solution of the RBSDE associated with $(\xi ^{i},f^{i})$ , $i=1,2$.
Suppose  that $\xi_t ^{2}\le \xi_t ^{1}$, $0\leq t \leq T$ a.s. and that
$f^{2}(t,Y_t^2, Z_t^2, k_t^2) \le f^{1}(t,Y_t^2, Z_t^2, k_t^2)$, $0\leq t \leq T$ $dP\otimes dt$-a.s.\,\\
%
Then, 
$Y_{t}^{2}\le
Y_{t}^{1}, \,\,\forall t\in [0,T] \text{ a.s. } $
\end{theorem}

\begin{proof} {\bf Step 1}: Let us first consider the case where, along with the assumptions of the theorem, the following additional assumption holds:  
$f^{2}(t,y,z,k) \le f^{1}(t,y,z,k)$ for all  $(y,z,k) \in \R^2 \times L^2_\nu$ $dP\otimes dt$-a.s. 
Let $S$ $\in$ $\stopo$. 
By the comparison theorem for BSDEs,  for each $\tau$ in $\T_{S,T}$ we have
${\cal E}^{f^2}_{S, \tau}(\xi^2 _{\tau}) \leq {\cal E}^{f^1}_{S, \tau}(\xi^1 _{\tau}) $ a.s.\,
By taking the essential supremum over $\tau\in\T_{S,T}$
 and  by using Theorem \ref{caracterisation}, we get $Y^{2}_{S}\leq Y^{1}_{S}\,$ a.s. \\
{\bf Step 2}: 
Let us now place ourselves under the assumptions of the theorem (without the additional assumption on $f^1$ and $f^2$ from Step 1).   
Let $\delta f$ be the process defined by 
$\delta f_t:= f^{2}(t,Y_t^2, Z_t^2, k_t^2)-f^{1}(t,Y_t^2, Z_t^2, k_t^2) $. Note that $(Y^2, Z^2, k^2)$ 
is the solution of the reflected BSDE associated with obstacle $\xi ^{2}$ and 
driver $f^1(t,y,z,k) + \delta f_t$. Now, by assumption, we have $f^1(t,y,z,k)+ \delta f_t $ $\leq $ $f^1(t,y,z,k) $  for all $(y,z,k)$. By Step 1 
applied to the drivers $f^1$ and
$f^1(t,y,z,k) + \delta f_t$ (instead of $f^2$), we get  $Y^2 \leq Y^1$. 
\end{proof}

\section{Further developments}\label{sec_extension}
In our ongoing work (cf. \cite{Imkeller2}), we study the case of doubly reflected BSDEs where the barriers  are not right-continuous.
\appendix
\section{}
The following observation is given for the convenience of the reader. 
\begin{Remark}\label{esssup_cadlag_process}
Let $Y$ be a right-continuous (or left-continuous) adapted process. Then, $\sup_{t\in[0,T]}Y_t=\sup_{t\in[0,T]\cap \Q}Y_t$ a.s., which implies that $\sup_{t\in[0,T]}Y_t$ is a random variable. Moreover, due to the definition of the essential supremum, we have
$\sup_{t\in[0,T]}Y_t=\esssup_{t\in[0,T]}Y_t=\esssup_{\tau\in\stopo}Y_\tau$ a.s.
\end{Remark}
%
\begin{Definition}\label{Def_surmartingale_forte}
Let $(Y)_{t\in[0,T]}$ be an optional process. We say that $Y$ is a {\em strong (optional) supermartingale} if
$Y_\tau$ is integrable for all $\tau\in\stopo$ and $Y_S\geq E[Y_\tau\mid \cf_S]$ a.s., for all $S,\tau\in\stopo$ such that $S\leq \tau$ a.s.
\end{Definition}
We  recall a decomposition of strong optional supermartingales, known as \emph{Mertens decomposition} 
(see e.g. \cite[Theorem 20, page 429, combined with Remark 3(b), page 205]{DM2} and \cite[Appendix 1, Thm.20, equalities 
(20.2)]{DM2}).

\begin{Theorem}[Mertens decomposition]\label{thm_Mertens_decomposition}
Let $Y$ be a strong optional supermartingale of class $(D)$. 
There exists a unique right-continuous left-limited uniformly integrable  martingale $(M_t)$, a unique predictable right-continuous nondecreasing process $(A_t)$ with $A_0=0$ and $E[A_T] < \infty$, and a unique right-continuous adapted nondecreasing process $(C_t)$, which is purely discontinuous, with $C_{0-}=0$ and $E[C_T] < \infty$, such that
\begin{equation}\label{eq_thm_Mertens_decomposition}
Y_t = M_t - A_{t} - C_{t-}, \,\,\, 0 \leq t \leq T  \,\,\, {\rm a.s.} 
\end{equation}
In particular, all trajectories of $Y$ have left and right limits.
\end{Theorem}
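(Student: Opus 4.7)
The plan is to reduce to the classical Doob--Meyer decomposition by isolating the right-discontinuities of $\dbarY$ in a purely discontinuous process $C$. From the strong supermartingale inequality applied to sequences $\tau_n\downarrow \tau$, the right-limits $\dbarY_{\tau+}$ exist and satisfy $\dbarY_\tau\geq E[\dbarY_{\tau+}\mid\cf_\tau]$ a.s.; hence $\Delta_+\dbarY_\tau\leq 0$ in expectation, the random set $\{\Delta_+\dbarY\neq 0\}$ is exhausted by a countable family of stopping times, and one may set
$$C_t:=\sum_{0\leq s\leq t}(-\Delta_+\dbarY_s),\qquad C_{0-}:=0.$$
By construction, $C$ is nondecreasing, adapted (using right-continuity of $\FB$), right-continuous, and purely discontinuous; the class $(D)$ hypothesis yields $E[C_T]<\infty$ by a telescoping argument on a finite partition of $[0,T]$ combined with uniform integrability of the family $\{\dbarY_\tau:\tau\in\stopo\}$.

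Next I set $Z_t:=\dbarY_t+C_{t-}$ and claim that $Z$ is a cadlag supermartingale of class $(D)$. Right-continuity follows from the direct check
$$\lim_{s\downarrow t}Z_s=\dbarY_{t+}+C_t=\dbarY_{t+}+C_{t-}+(\dbarY_t-\dbarY_{t+})=Z_t,$$
since $\Delta C_t=-\Delta_+\dbarY_t$. For $S\leq\tau$ in $\stopo$, the strong supermartingale property of $\dbarY$ combined with $E[C_{\tau-}-C_{S-}\mid\cf_S]\geq 0$ gives $Z_S\geq E[Z_\tau\mid\cf_S]$ a.s., and the class $(D)$ property transfers from $\dbarY$ and $C$ to $Z$. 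Applying the classical Doob--Meyer theorem to $Z$ produces a unique uniformly integrable cadlag martingale $M$ and a unique predictable right-continuous nondecreasing process $A$ with $A_0=0$, $E[A_T]<\infty$, such that $Z_t=M_t-A_t$, whence \eqref{eq_thm_Mertens_decomposition}.

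For uniqueness, any other Mertens-type decomposition $\dbarY=M'-A'-C'_{-}$ must satisfy $-\Delta C'_t=\Delta_+\dbarY_t$ at every $t$, forcing $C'=C$ (both right-continuous, purely discontinuous, with $C'_{0-}=0$ and the same jumps); then $M'-A'=M-A$ and the uniqueness part of classical Doob--Meyer gives $M'=M$ and $A'=A$. The continuity criterion follows similarly: since $C_{-}$ is predictable and left-continuous, the process $Z=\dbarY+C_{-}$ is left-continuous along stopping times in expectation if and only if $\dbarY$ is, so the analogous continuity statement in classical Doob--Meyer (namely, $A$ is continuous iff $Z$ is left-continuous in expectation along stopping times) transfers directly to the Mertens setting.

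The main obstacle is the quantitative control $E[C_T]<\infty$ and the verification that $Z$ still lies in class $(D)$; both rest delicately on the strong supermartingale property, evaluated along nested partitions of $[0,T]$ by stopping times, so that the accumulated right-jumps are dominated by a telescoping expectation which is finite by the $(D)$ hypothesis. Once this is in place, the remainder of the argument is essentially a bookkeeping reduction to the recalled classical Doob--Meyer theorem.
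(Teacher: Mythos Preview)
The paper does not prove this theorem; it is recalled as a classical result with references to Dellacherie--Meyer, El~Karoui, and Lenglart. Your overall strategy (collect the right jumps of $\dbarY$ into a purely discontinuous nondecreasing $C$, then apply classical Doob--Meyer to the cadlag remainder $Z:=\dbarY+C_{-}$) is indeed the standard route taken in those references, and the right-continuity check for $Z$ and the uniqueness argument are correct.

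There is, however, a genuine gap in your verification that $Z$ is a supermartingale. You write that the strong supermartingale property of $\dbarY$ ``combined with $E[C_{\tau-}-C_{S-}\mid\cf_S]\geq 0$'' yields $Z_S\ge E[Z_\tau\mid\cf_S]$. But this inequality points the wrong way: since $C$ is nondecreasing, $E[C_{\tau-}\mid\cf_S]\geq C_{S-}$, so adding $C_{S-}$ to the left side and $E[C_{\tau-}\mid\cf_S]$ to the right side of $\dbarY_S\geq E[\dbarY_\tau\mid\cf_S]$ does \emph{not} preserve the inequality. What must actually be shown is
\[
\dbarY_S - E[\dbarY_\tau\mid\cf_S]\;\geq\; E\Big[\sum_{S\le s<\tau}(\dbarY_s-\dbarY_{s+})\,\Big|\,\cf_S\Big],
\]
i.e.\ that the conditional expected total drop of $\dbarY$ over $[S,\tau]$ dominates the conditional expected sum of its right jumps on that interval. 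This is true but not immediate: one needs to insert a refining sequence of (stopping-time) partitions of $[S,\tau]$, use the strong supermartingale inequality on each subinterval, split each increment as $(\dbarY_{u_i}-\dbarY_{u_i+})+(\dbarY_{u_i+}-\dbarY_{u_{i+1}})$, and pass to the limit using the class~$(D)$ hypothesis for the required uniform integrability. This is essentially the same ``telescoping along partitions'' idea you invoke for $E[C_T]<\infty$, but it has to be carried out here as well, and at the level of conditional expectations. Once that step is filled in, the rest of your argument goes through.
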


\begin{Remark}\label{Rmk_jumps of C}
Since the filtration in our framework is quasi-left-continuous, martingales have only totally inaccessible jumps. 
From this and from   Mertens decomposition \eqref{eq_thm_Mertens_decomposition}, we deduce that,  for each predictable stopping time $\tau$,   $Y_{\tau}-Y_{\tau-}= - (A_{\tau}-A_{\tau-})$ a.s.\,
\end{Remark}

\begin{Remark}
    By  Mertens decomposition \eqref{eq_thm_Mertens_decomposition}, we get $\Delta C_t$ $=$ 
$C_t-C_{t-}$
$=$ $ Y_t - Y_{t+}$. Hence, $Y_t\geq Y_{t+}$, for all $t\in[0,T)$, which implies
 that $Y$ is necessarily r.u.s.c.\, Moreover, $Y$ is right-continuous if and only if  $C\equiv 0$. \end{Remark}
 
By using this remark, we recover the well-known Doob-Meyer decomposition for \emph{right-continuous} supermartingales of class $(D)$. 
Indeed, let $Y$ be a \emph{right-continuous} supermartingale (in the usual sense) of class $(D)$. Then $Y$ is a strong (optional) supermartingale in the sense of the above definition (due to the optional sampling theorem for right-continuous supermartingales).    Mertens decomposition of $Y$  reduces to $Y=M-A$ (where $M$ and $A$ are as  above), as   $C\equiv 0$.  This corresponds to  Doob-Meyer decomposition of $Y$.  

The following result from potential theory can be found in \cite{DM2}.  
\begin{Theorem}[Dellacherie-Meyer]\label{thm_fondamental_generalized}
Let $K$ be a non-decreasing predictable process (which is not necessarily right-continuous). Let $U$ be the \emph{potential} of the  process $K$, i.e. 
$$U_t:= E[
K_T-K_t\,|\,F_t]$$ for all $t\in[0,T]$. Assume that there exists a non-negative $\cf_T$-measurable random variable $X$ such that  $U_S\leq E[X|\cf_S]$ a.s. for all $S\in\stopo$. Then, there exists a constant $c>0$ such that
\begin{equation}\label{eq_norms_inequality}
E[K_T^2]\leq c E[X^2].
\end{equation}      
\end{Theorem}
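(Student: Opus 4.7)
The plan is to run the classical ``energy inequality'' argument for the potential of a predictable nondecreasing process and close with Cauchy--Schwarz. Without loss of generality assume $A_0=0$; otherwise, replace $A$ by $A-A_0$, which is still predictable and nondecreasing and has the same potential $U$. Set $M_t:=E[A_T\mid\cf_t]=U_t+A_t$. The starting point is the pathwise identity
\begin{equation*}
\int_{]0,T]}(A_T-A_{s-})\,dA_s \;=\; \tfrac12 A_T^2+\tfrac12\!\sum_{0<s\leq T}(\Delta A_s)^2 \;\geq\; \tfrac12 A_T^2,
\end{equation*}
which follows from the integration-by-parts formula applied to the finite variation process $A$ (writing $A_T^2=\int_{]0,T]}A_T\,dA_s$ and $A_T\int dA_s = \int A_{s-}dA_s+\int(A_T-A_{s-})dA_s$, together with $2\int A_{s-}dA_s = A_T^2-\sum(\Delta A_s)^2$). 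Taking expectations yields $E[A_T^2]\leq 2E\bigl[\int_{]0,T]}(A_T-A_{s-})\,dA_s\bigr]$.

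Next I would exploit the predictability of $A$ through the projection identity $E\bigl[\int H\,dA\bigr]=E\bigl[\int {}^{p}H\,dA\bigr]$, valid for any nonnegative measurable process $H$ when $A$ is a predictable integrator. Applied to the time-constant process $H_s\equiv A_T$, whose predictable projection is $E[A_T\mid\cf_{s-}]=M_{s-}$, this gives
\begin{equation*}
E\Big[\int_{]0,T]}(A_T-A_{s-})\,dA_s\Big]=E\Big[\int_{]0,T]}(M_{s-}-A_{s-})\,dA_s\Big]=E\Big[\int_{]0,T]}U_{s-}\,dA_s\Big].
\end{equation*}
The hypothesis $U_S\leq E[X\mid\cf_S]$ at all stopping times $S$, upon taking conditional expectations with respect to $\cf_{S-}$ along predictable $S$ and invoking the predictable section theorem, upgrades to the inequality $U_{s-}\leq E[X\mid\cf_{s-}]$ in the $dP\otimes dA$-a.s. sense. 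A second application of the projection identity with $H_s\equiv X$ then produces $E\bigl[\int_{]0,T]} E[X\mid\cf_{s-}]\,dA_s\bigr]=E[X\,A_T]$.

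Stitching everything together,
\begin{equation*}
E[A_T^2]\;\leq\;2E\Big[\int_{]0,T]}U_{s-}\,dA_s\Big]\;\leq\;2E[X\,A_T]\;\leq\;2\sqrt{E[X^2]\,E[A_T^2]}
\end{equation*}
by Cauchy--Schwarz, whence $E[A_T^2]\leq 4\,E[X^2]$ and the claim holds with $c=4$. The main obstacle is the bookkeeping around left limits: one must justify the passage from the stopping-time hypothesis on $U$ to the inequality between predictable projections, and verify that the identity $E\bigl[\int H\,dA\bigr]=E\bigl[\int {}^{p}H\,dA\bigr]$ applies in the precise form used. In the Brownian framework of the paper this is considerably streamlined by Remark~\ref{Rmk_cadlag_modification} (every martingale is continuous, so $M_{s-}=M_s$, and the predictable and optional $\sigma$-algebras coincide), and the remaining arguments reduce to routine computations.
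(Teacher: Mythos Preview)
The paper does not prove this theorem: it simply refers the reader to Dellacherie--Meyer \cite{DM2}, namely Theorem~VI.99 (the right-continuous case) together with its extension in Appendix~1, Paragraph~18, to nondecreasing predictable processes that are neither right- nor left-continuous. Your argument is precisely the classical energy-inequality proof behind Theorem~VI.99, and it is correct under the tacit assumption that $A$ is right-continuous: the integration-by-parts identity $\int_{]0,T]}(A_T-A_{s-})\,dA_s=\tfrac12 A_T^2+\tfrac12\sum(\Delta A_s)^2$ and the projection identity $E\bigl[\int H\,dA\bigr]=E\bigl[\int{}^{p}H\,dA\bigr]$ are standard in that setting, and the passage from $U_S\leq E[X\mid\cf_S]$ to the pathwise bound $U_{s-}\leq E[X\mid\cf_{s-}]$ follows from the optional section theorem after taking left limits.

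What you have not addressed is that the statement, as written and as used in Corollary~\ref{corollary_DM}, does \emph{not} assume right-continuity of $A$. The application there is to the Mertens process $\dbarA_t=A_t+C_{t-}$, which is ladlag but in general neither right- nor left-continuous. In that generality the pathwise integration-by-parts formula acquires additional terms from the right jumps $\Delta_+A$, and the projection identity for non-cadlag predictable integrators must be re-examined. This is exactly why the paper invokes \cite{DM2}, Appendix~1, Paragraph~18, rather than Theorem~VI.99 alone. Your closing remark that the Brownian framework simplifies matters because martingales are continuous and optional equals predictable is correct as far as the filtration is concerned, but it does not touch this issue: the missing regularity is that of the integrator $A$ itself, not of the ambient martingales.
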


\begin{proof}
For the proof of  the result the reader is referred to Paragraph 18 in \cite[Appendix 1]{DM2} generalizing Theorem VI.99 of the same reference
 to the case of a non-decreasing process which is not necessarily right-continuous nor left-continuous.
\end{proof}
By using the previous theorem, we obtain the following integrability property of 
the {\em Mertens process}
associated with a strong optional supermartingale, which is used in the proof of Lemma \ref{f}.
\begin{Corollary}[{\em Mertens process} of a strong supermartingale: a useful estimate]\label{corollary_DM} 
Let $Y$ be a strong optional supermartingale of class $(D)$ such that: for all $S\in\stopo$, 
$|Y_S|\leq E[X|\cf_S]$ a.s., where $X$ is a non-negative $\cf_T$-measurable random variable. 

Let us consider the {\em Mertens process} of $Y$, that is the process $(A_t + C_{t-})$, where $A$ and $C$ are the two nondecreasing processes of Mertens decomposition of $Y$ from equation \eqref{eq_thm_Mertens_decomposition}.
There exists a constant $c>0$ such that 
\begin{equation}\label{eq_corollary_DM}
E\left[\left(A_T+C_{T-}\right)^2\right]\leq c E[X^2].
\end{equation}      
\end{Corollary}
\begin{proof}
Let us introduce the notation $K_t:=A_t+C_{t-}$ ($K$ is the Mertens process of $Y$). Note that $K$ is a non-decreasing predictable process (which is not necessarily right-continuous).\\
Let $S\in\stopo.$ From Mertens decomposition, we have
$Y_S=M_S-K_S \text{ a.s. }$
and  $Y_T=M_T-K_T \text{ a.s. }$ By subtracting the second equation from the first, and by taking conditional expectations, we derive that 
$Y_S-E[Y_T|\cf_S]=E[K_T- K_S|\cf_S]$ a.s. Hence, the process $(U_t)$ defined by $U_t:=Y_t-E[Y_T|\cf_t]$ is the potential associated with the non-decreasing predictable process $K$. Now, we have
\begin{equation}\label{eq2_corollary_DM}
|U_S|=|Y_S-E[Y_T|\cf_S]|\leq  |Y_S|+E[|Y_T||\cf_S] \leq E[2X|\cf_S] \text{ a.s. }\,, 
\end{equation}
where the last inequality follows from the assumption.
By applying Theorem \ref{thm_fondamental_generalized}, there exists a constant $c>0$ such that
$E[K_T^2]\leq c E[X^2]$, which is the desired conclusion.
\end{proof}

%

We recall the change of variables formula  for {\em optional strong semimartingales} 
 which are not necessarily right-continuous. The result  can be seen as a generalization of the classical Itô formula and can be found in
\cite[Theorem 8.2]{Galchouk} (cf. also \cite[Chapter VI, Section 3, page 538]{Lenglart}). We recall the result in our framework in which the underlying filtered probability space satisfies the usual conditions. 
\begin{Theorem}[Gal'chouk-Lenglart]\label{Thm_Ito}
Let $n\in\N$. Let $X$ be an $n$-dimensional {\em optional strong semimartingale},
i.e. $X=(X^1,\ldots, X^n)$ is an $n$-dimensional optional process with decomposition $X^k=X_0^k+M^k +A^k+B^k$, for all $k\in\{1,\ldots,n\}$, where 
$M^k$ is a (cadlag) local martingale, $A^k$ is a right-continuous adapted process of finite variation such that $A_0=0$, and $B^k$ is a left-continuous adapted process of finite variation which is purely discontinuous and such that $B_0=0$. Let $F$ be a twice continuously differentiable function on $\R^n$. Then, 
\begin{equation*}
\begin{aligned}
F(X_t)= &\,\, F(X_0)+\sum_{k=1}^{n}\int_{]0,t]} D^k F(X_{s-})d(A^k+M^k)_s 
+\frac 1 2 \sum_{k,l=1}^{n}\int_{]0,t]} D^kD^l F(X_{s-})d<M^{kc},M^{lc}>_s\\
\quad &+ \sum_{0<s\leq t}\left[ F(X_s)-F(X_{s-})-\sum_{k=1}^{n} D^k F(X_{s-}) \Delta X_s^k\right]
+\sum_{k=1}^{n}\int_{[0,t[} D^k F(X_{s})d(B^k)_{s+}\\
 &+\sum_{0\leq s<t}\left[ F(X_{s+})-F(X_{s})-\sum_{k=1}^{n} D^k F(X_{s}) \Delta_+ X_s^k\right],\quad 0 \leq t \leq T 
 \quad {\rm a.s.}\,,
\end{aligned}
\end{equation*}
where $D^k$ denotes the differentiation operator with respect to the $k$-th coordinate, and $M^{kc}$ denotes the continuous part of $M^k$.
\end{Theorem}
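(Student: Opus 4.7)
The plan is to reduce to the classical It\^o formula for cadlag semimartingales by isolating the left-continuous purely discontinuous component. Write $X = Y + B$ with $Y^k := X_0^k + M^k + A^k$ cadlag and $B^k$ left-continuous, purely discontinuous, of finite variation, with $B_0^k = 0$. Since $B^k$ is left-continuous, $B^k_{s-} = B^k_s$ for every $s$, so the left-jumps of $X^k$ coincide with those of $Y^k$ (that is, $\Delta X_s^k = \Delta Y_s^k$), while the right-jumps of $X^k$ coincide with those of $B^k$ (that is, $\Delta_+ X_s^k = \Delta_+ B_s^k$). Being left-continuous, purely discontinuous, and of finite variation, $B^k$ is (pathwise) a pure-jump process carried by the at most countable set of its right-jump times, with $\sum_s |\Delta_+ B_s^k| < \infty$ on compacts.

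I would then proceed in three steps. \textbf{Step 1:} Enumerate the right-jump times of $B$ on $[0,t]$ as a countable set. Between two consecutive such times, $B$ is constant, so on this interval $X$ is cadlag and coincides with $Y$ up to an additive constant. Apply the classical (cadlag) It\^o formula on each such interval; this produces, on the interval, the first three terms of the Gal'chouk--Lenglart formula (integrals against $d(A^k+M^k)$, against $d\langle M^{kc}, M^{lc}\rangle$, and left-jump Taylor sums), with evaluation point $X_{s-}$ rather than $Y_{s-}$, thanks to the identity $X_{s-} = Y_{s-} + B_s$. \textbf{Step 2:} At each right-jump time $s$, perform a first-order Taylor expansion of $F$ around $X_s$ to obtain $F(X_{s+}) - F(X_s) = \sum_k D^k F(X_s)\,\Delta_+ X_s^k + R_s$ with $R_s$ the Taylor remainder. \textbf{Step 3:} Telescope over the inter-jump intervals and right-jump times in $[0,t]$. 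The integrals in Step 1 concatenate into the integrals over $]0,t]$; the left-jump Taylor sums concatenate into the sum over $0 < s \le t$; the first-order terms from Step 2 sum to $\sum_k \int_{[0,t[} D^k F(X_s)\,d(B^k)_{s+}$, this being precisely the Lebesgue--Stieltjes interpretation of the countable sum $\sum_s D^k F(X_s)\,\Delta_+ B_s^k$ carried by the pure-jump measure $d(B^k)_{s+}$; and the remainders $R_s$ aggregate into the final sum of the formula.

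The principal obstacle is the convergence and bookkeeping of the two series of Taylor remainders and the careful telescoping over countably many inter-jump pieces. Since $F$ is $C^2$ and the trajectory of $X$ is locally bounded (as the sum of a cadlag local martingale, a cadlag finite-variation process, and a caglad finite-variation process), one has pointwise bounds $|F(X_s) - F(X_{s-}) - \sum_k D^k F(X_{s-})\,\Delta X_s^k| \le C |\Delta X_s|^2$ and $|R_s| \le C |\Delta_+ X_s|^2$. The left-jump remainders are absolutely summable on compacts by the standard estimate $\sum_s |\Delta Y_s|^2 < \infty$ for cadlag semimartingales; the right-jump remainders are summable because $\sum_s |\Delta_+ B_s|^2 \le \bigl(\sup_{s \le t} |\Delta_+ B_s|\bigr) \sum_s |\Delta_+ B_s| < \infty$ by finite variation of $B$. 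A standard localization by stopping times reduces everything to the bounded case, after which the telescoping and the identification of the sum of first-order right-jump contributions with the Lebesgue--Stieltjes integral against $d(B^k)_{s+}$ are immediate from the pure-jump structure of $B^k$.
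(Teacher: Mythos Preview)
The paper does not prove this theorem at all: it is stated in the Appendix as a recall from the literature, with a citation to \cite[Theorem~8.2]{Galchouk} and \cite[Chapter~VI, Section~3]{Lenglart}, and no argument is given. So there is no ``paper's own proof'' to compare against; the authors simply invoke the result.

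As for your sketch, the strategy of splitting $X=Y+B$ with $Y$ cadlag and $B$ caglad, applying the classical It\^o formula on stretches where $B$ is constant, and patching across the right-jumps of $B$ by a Taylor expansion, is indeed the natural route and is essentially how the result is obtained in the cited references. There is, however, a genuine gap in your Step~1/Step~3: you write ``between two consecutive such times'' and ``telescope over the inter-jump intervals'', but the set of right-jump times of $B$, while countable, need not be discrete; it may have accumulation points (for instance $B$ could right-jump at every $1/n$), in which case there are no ``consecutive'' jump times and no inter-jump intervals to telescope over. The standard fix is an $\varepsilon$-approximation: replace $B$ by the process $B^{\varepsilon}$ retaining only the right-jumps of size larger than $\varepsilon$ (finitely many on $[0,t]$ by finite variation), carry out your piecewise argument for $Y+B^{\varepsilon}$ where the telescoping is now over finitely many genuine intervals, and then let $\varepsilon\downarrow 0$. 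The $C^2$ bounds you already wrote down, together with $\sum_s|\Delta_+ B_s|<\infty$, are exactly what is needed to pass to the limit in each term. Your localization by stopping times handles unboundedness of $X$ but does not by itself resolve the accumulation issue; you need both.
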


\begin{Corollary}\label{Cor_Ito}
Let $Y$ be a one-dimensional optional strong semimartingale with decomposition $Y=Y_0+M+A+B$, where 
$M$, $A$, and $B$ are as in the above theorem. Let $\beta>0$. Then, almost surely, for all $t\geq 0$,
\begin{equation*}
\begin{aligned}
\e^{\beta t}Y_t^2 = \,\, &Y_0^2+\int_{]0,t]}\beta\e^{\beta s} Y_{s}^2 ds+2\int_{]0,t]} \e^{\beta s}Y_{s-}d(A+M)_s
+\int_{]0,t]} \e^{\beta s}d<M^{c},M^{c}>_s\\
&\quad + \sum_{0<s\leq t}\e^{\beta s}(Y_s-Y_{s-})^2+
\int_{[0,t[} 2\e^{\beta s}Y_{s}d(B)_{s+}
+\sum_{0\leq s<t}\e^{\beta s}(Y_{s+}-Y_{s})^2.
\end{aligned}
\end{equation*}
\end{Corollary}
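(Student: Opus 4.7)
The plan is to apply Theorem \ref{Thm_Ito} (Gal'chouk--Lenglart) in dimension $n=2$ to the process $X=(X^1,X^2)$ defined by $X^1_t := t$ and $X^2_t := Y_t$, and to the function $F:\mathbb{R}^2\to\mathbb{R}$ given by $F(x_1,x_2):=e^{\beta x_1}x_2^2$. The process $X^1$ is deterministic, continuous, and of finite variation, so its optional-semimartingale decomposition is $M^1\equiv 0$, $B^1\equiv 0$, $A^1_t=t$, while the decomposition of $X^2$ is the one assumed for $Y$. Since $F$ is of class $C^2$ on $\mathbb{R}^2$, the theorem applies.

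Next I would compute the derivatives
$D^1F=\beta e^{\beta x_1}x_2^2$, $D^2F=2e^{\beta x_1}x_2$, $D^1D^1F=\beta^2 e^{\beta x_1}x_2^2$, $D^1D^2F=2\beta e^{\beta x_1}x_2$, $D^2D^2F=2e^{\beta x_1}$, and substitute them term by term into Gal'chouk--Lenglart's formula. The first integral sum $\sum_k\int D^kF(X_{s-})\,d(A^k+M^k)_s$ splits into a $k=1$ contribution equal to $\int_{]0,t]}\beta e^{\beta s}Y_{s-}^{2}\,ds$ — which coincides with $\int_{]0,t]}\beta e^{\beta s}Y_s^{2}\,ds$ since $\{s:Y_{s-}\neq Y_s\}$ is at most countable and hence Lebesgue-negligible — and a $k=2$ contribution equal to $\int_{]0,t]}2e^{\beta s}Y_{s-}\,d(A+M)_s$. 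In the quadratic-variation double sum only $(k,l)=(2,2)$ survives because $M^{1c}\equiv 0$, producing $\int_{]0,t]}e^{\beta s}\,d\langle M^c,M^c\rangle_s$. In the right-jump integral $\sum_k\int D^kF(X_s)\,d(B^k)_{s+}$, only $k=2$ contributes, yielding $\int_{[0,t[}2e^{\beta s}Y_s\,d(B)_{s+}$.

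It remains to handle the two pointwise jump sums. Since $\Delta X_s^1=0$, the left-jump summand reduces to
\begin{equation*}
F(X_s)-F(X_{s-})-D^2F(X_{s-})\Delta X_s^2=e^{\beta s}\bigl(Y_s^2-Y_{s-}^2\bigr)-2e^{\beta s}Y_{s-}(Y_s-Y_{s-})=e^{\beta s}(Y_s-Y_{s-})^2,
\end{equation*}
via the identity $a^2-b^2-2b(a-b)=(a-b)^2$. The same computation, applied at $X_s$ and $X_{s+}$ using $\Delta_+X_s^1=0$, gives $e^{\beta s}(Y_{s+}-Y_s)^2$ for each term of the right-jump sum. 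Collecting every piece and noting that $F(X_0)=Y_0^2$ reproduces the displayed identity exactly.

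I do not anticipate any genuine obstacle: the argument is a careful bookkeeping exercise built upon the single nontrivial input, Theorem \ref{Thm_Ito}. The only points requiring attention are the telescoping cancellation in the jump terms and the observation that replacing $Y_{s-}^2$ by $Y_s^2$ in the $ds$-integral changes nothing, which explains the asymmetry in the statement between the $(A+M)$-integrand ($Y_{s-}$) and the Lebesgue integrand ($Y_s$).
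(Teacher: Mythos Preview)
Your proof is correct and follows essentially the same approach as the paper: both apply the Gal'chouk--Lenglart formula in dimension $2$ and reduce the jump sums via the identity $a^2-b^2-2b(a-b)=(a-b)^2$. The only cosmetic difference is that the paper takes $X^1_t=e^{\beta t}$ with $F(x,y)=xy^2$, whereas you take $X^1_t=t$ with $F(x_1,x_2)=e^{\beta x_1}x_2^2$; these are equivalent parametrizations, and your explicit justification for replacing $Y_{s-}^2$ by $Y_s^2$ in the Lebesgue integral is a detail the paper leaves implicit.
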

\begin{proof} It suffices to apply 
Gal'chouk-Lenglart's formula  with $n=2$, $F(x,y)=xy^2$, $X^1_t=\e^{\beta t}$, and $X^2_t=Y_t$. 
Indeed, by applying Theorem \ref{Thm_Ito} and by noting  that the local martingale part and the purely discontinuous part of $X^1$ are both equal to $0$, we obtain
\begin{equation*}
\begin{aligned}
\e^{\beta t}Y_t^2&=Y_0^2+\int_{]0,t]}\beta\e^{\beta s} Y_{s}^2 ds+
2\int_{]0,t]} \e^{\beta s}Y_{s-}d(A+M)_s
+\frac 1 2\int_{]0,t]} 2\e^{\beta s}d<M^{c},M^{c}>_s\\
&+ \sum_{0<s\leq t}\e^{\beta s}\big(Y_s^2-(Y_{s-})^2-2Y_{s-}(Y_s-Y_{s-})\big)\\
&+\int_{[0,t[} 2\e^{\beta s}Y_{s}d(B)_{s+}
+\sum_{0\leq s<t}\e^{\beta s}\big((Y_{s+})^2-(Y_{s})^2-2Y_s(Y_{s+}-Y_{s})\big).
\end{aligned}
\end{equation*}
The desired expression follows as $Y_s^2-(Y_{s-})^2-2Y_{s-}(Y_s-Y_{s-})=(Y_s-Y_{s-})^2$ and 
$(Y_{s+})^2-(Y_{s})^2-2Y_s(Y_{s+}-Y_{s})=(Y_{s+}-Y_s)^2.$
\end{proof}
%
%

\begin{Proposition}[BSDE with "generalized" driver vs. BSDE]\label{compref} 
Let $f$ be a predictable Lipschitz driver satisfying Assumption  \ref{Royer}. Let $A$  be a nondecreasing (resp. nonincreasing) right-continuous predictable process in ${\cal S}^2$ with $A_0=0$
 and let $C$ be a nondecreasing (resp. nonincreasing) right-continuous adapted purely discontinuous process in ${\cal S}^2$ with $C_{0-}=0$. Let $(Y,Z,k) \in {\cal S}^2 \times \mathbb{H}^2\times \mathbb{H}^2_\nu$  satisfy a.s. for all $t \in [0,T]$,
  %
  %
 \begin{equation}\label{dy}
 Y_{t}= Y_{T}+ \int_{t} ^T f(s,Y_{s}, Z_{s},k_s)ds + A_T - A_{t} + C_{T-}- C_{t-}- \int_{t} ^T Z_{s}dW_{s}-\int_{t} ^T \int_E k_{s}(e)\tilde{N}(ds,de).
 \end{equation}
Then the process $(Y_t)$ is a strong ${\cal E}^f$-supermartingale (resp. ${\cal E}^f$-submartingale).

\end{Proposition}

\begin{proof} We address the case where $A$ and $C$ are nondecreasing. Let  $ \tau, \theta \in \T_0$ be such that $\tau \leq \theta$ a.s.\,
Let us show that $Y_{\tau} \geq {\cal E}^f_{\tau ,\theta}(Y_{\theta}) $ a.s.\\
We denote by $(X,\pi,l)$ the solution to the BSDE associated with driver $f$, terminal time $\theta$, and terminal condition $Y_{\theta}$; then ${\cal E}^f_{\tau ,\theta}(Y_{\theta})=  X_{\tau }$ a.s. (by definition of ${\cal E}^f$).  \\ 
Set $\bar Y_t = Y_t - X_t$, $\bar Z_t = Z_t - \pi_t$ and $\bar k_t=k_t-l_t$. Then
  $$
  -d \bar Y_t  \displaystyle =  h_t dt + dA_t + d C_{t-}- \bar Z_t dW_t-\int_E \bar k_t(e) \tilde N(dt,de), \quad
   \bar Y_{\theta}  = 0,
   $$
   where $h_t:=f (t, Y_{t-}, Z_t, k_t) - f(t, X_{t-}, \pi_t, l_t)$. 
By the same arguments as those of  the proof of the comparison theorem for BSDEs with jumps (cf.  \cite[Thm. 4.2]{QuenSul}, or \cite{R}), using Assumption \ref{Royer} on $f$, we can show  that 
\begin{equation}\label{eq2}
h_t \geq  \delta_t \bar Y_t + \beta_t \bar Z_t + \langle \gamma_t\,,\, \bar k_t  \rangle_\nu, \;\; 0 \leq t \leq T,\quad 
dP \otimes dt-{\rm a.e.}\;
\end{equation}
where $\gamma_t:= \theta_t^{X_{t-}, \pi_t, k_t, l_t}$ and where $\delta$ and $\beta$ are predictable bounded processes (which can be expressed as increment rates of $f$ with respect to $y$ and $z$).\\
Let $\Gamma_{\tau,\cdot}$ be  the unique solution of the following forward SDE
\begin{equation}\label{eq4b}
d \Gamma_{\tau,s}  = \displaystyle \Gamma_{\tau,s-} \left[ \delta_s ds + \beta_s d W_s + \int_E\gamma_s(e)\tilde N(ds,de)\right] ; \;\; 
\Gamma_{\tau,\tau}  = 1. 
\end{equation} 
Suppose for a while that we have shown 
\begin{equation}\label{eq3}
  \Gamma_{\tau,\tau}\bar Y_\tau \geq E \left[  \int_\tau^\theta \Gamma_{\tau,s-}\, dA_s +  \int_\tau^\theta \Gamma_{\tau,s}\,dC_{s} \mid \FC_{\tau} \right]\;  {\rm a.s.}
\end{equation}
Then, since $\Gamma_{\tau,s} \geq 0$ and $\Gamma_{\tau,\tau}=1$, we have $\bar Y_\tau \geq 0$ a.s., that is $Y_{\tau} \geq X_{\tau}={\cal E}^f_{\tau ,\theta}(Y_{\theta}) $ a.s.\,, which is the desired result.
It remains to show \eqref{eq3}. 
To simplify the notation, we denote  $\Gamma_{\tau,s}$ by $\Gamma_s$ for $s \geq \tau$. 
We use that $\bar Y$ is a strong optional semimartingale with decomposition  $\bar Y=M^1+A^1+B^1$, where $M^1_t=\int_0^t \bar Z_s dW_s+\int_0^t\int_E  \bar k_s(e) \tilde N(ds,de)$, $A^1_t:=-\int_0^t h_s ds-A_s$, and $B^1_t:=-C_{t-}$,  
and we apply Gal'chouk-Lenglart's formula  from Theorem \ref{Thm_Ito} with $n:=2$, $X^1:=\bar Y$, $X^2:= \Gamma$, and $F(x^1,x^2):=x^1x^2$. We obtain    
\begin{equation}\label{eq_Ito_00}
\begin{aligned}
\Gamma_\tau\barY_\tau&= -\int_\tau^\theta \Gamma_{s-}(\bar Z_s+\bar Y_{s-}\beta_s) dW_s-  
\int_\tau^\theta \Gamma_{s}(\bar Y_{s-}\delta_s+\bar Z_s\beta_s -h_s)ds\\
&+\int_\tau^\theta \Gamma_{s-}dA_s+\int_\tau^\theta \Gamma_{s}dC_s-\int_\tau^\theta\int_E \Gamma_{s-}(\bar k_s(e)+\bar Y_{s-}\gamma_s(e))\tilde N(ds,de)
-\sum_{\tau \leq s\leq \theta} \Delta \Gamma_s\Delta Y_s. 
\end{aligned}
\end{equation}
By using the fact that $A_\cdot$ and $N(\cdot,de)$ 
do not have common jumps, we get $\sum_{\tau \leq s\leq \theta} \Delta \Gamma_s\Delta Y_s=\int_\tau^\theta \int_E\Gamma_{s-} \gamma_s(e) \bar k_s(e) N(ds,de).$ By replacing this expression in equation \eqref{eq_Ito_00} and by doing some computations, we obtain
\begin{equation}
\begin{aligned}
\Gamma_\tau\barY_\tau&= -\int_\tau^\theta \Gamma_{s-}(\bar Z_s+\bar Y_{s-}\beta_s) dW_s-  
\int_\tau^\theta \Gamma_{s}(\bar Y_{s-}\delta_s+\bar Z_s\beta_s +\langle\gamma_s,\bar k_s\rangle_\nu-h_s)ds\\
&+\int_\tau^\theta \Gamma_{s-}dA_s+\int_\tau^\theta \Gamma_{s}dC_s-\int_\tau^\theta\int_E \Gamma_{s-}(\bar k_s(e)+\bar Y_{s-}\gamma_s(e)+\gamma_s(e)\bar k_s(e))\tilde N(ds,de). 
\end{aligned}
\end{equation}
Now, the stochastic integral with respect to "$dW_s$" in the above equation is a martingale (since $\Gamma$ $\in$ $S^2$, $\bar Z\in\H^2$, $\bar Y\in\mathcal S^2$, and $\beta$ is bounded). The stochastic integral with respect to the Poisson random measure is also a martingale. By taking the conditional expectation and by using the inequality \eqref{eq2}, we derive \eqref{eq3}.  
The proof is thus complete.
\end{proof}

\noindent \textbf{Proof of Proposition \ref{Prop_Banach_space}:}
We first show that 
$\vertiii{\cdot}_{{\cal S}^{2}}$ is a norm on the space of optional processes. 
The positive homogeneity  and the triangular inequality are easy to check. Suppose now that $\phi\in{\cal S}^{2}$ is such that 
$\vertiii{\phi}_{{\cal S}^{2}}=0.$ Then, $\esssup_{S\in\stopo} |\phi_S |^2=0$ a.s., which, by definition 
of the essential supremum, implies that 
$|\phi_S|^2=0$ a.s. for all $S\in\stopo$. By a classical result of the General Theory of Processes (\cite[Theorem IV.84]{DM1}),
 we obtain that the process $\phi$ is indistinguishable from the null process, that is
 $\phi_t=0$, $0 \leq t \leq T$ a.s. 
 We conclude that $\vertiii{\cdot}_{{\cal S}^{2}}$ is a norm on ${\cal S}^{2}$.\\
Let us prove that the space $({\cal S}^{2}, \vertiii{\cdot}_{{\cal S}^{2}})$ is complete. We only sketch the proof since its main steps are similar to those of the  proof of the completeness of the space $(L^2, \|\cdot\|_{L^2})$.  Let $(\phi^n)$  be a Cauchy sequence in ${\cal S}^{2}$ for the norm  $\vertiii{\cdot}_{{\cal S}^{2}}.$ We extract a subsequence  $(\phi^{n_{k}})_{k\in\N}$ such that 
$\vertiii{\phi^{n_{k+1}}-\phi^{n_{k}}}_{{\cal S}^{2}}\leq \frac 1 {2^k}$, for all  $k\in\N$.
Setting $g_t^n:= \sum_{k=1}^n |\phi_t^{n_{k+1}}-\phi^{n_{k}}|$ for each $n$, 
by the triangular inequality, we derive that 
%
%
$\vertiii{g^n}_{{\cal S}^{2}}\leq \sum_{k=1}^n \vertiii{ \phi_t^{n_{k+1}}-\phi^{n_{k}}}_{{\cal S}^{2}}\leq  \sum_{k=1}^n \frac 1 {2^k}\leq \sum_{k=1}^\infty \frac 1 {2^k}=1. $
We set $g_t:=\limup g_t^n$, for all $t\in[0,T]$ (the limit exists in $[0,+\infty]$ as the sequence $(g_t^n)_n$ is non-negative non-decreasing).  
Being the limit of optional processes, the process $g$ is optional.  
Since $\esssup_{\tau\in\stopo} \sup_n |g_\tau^n|^2 =\sup_n\esssup_{\tau\in\stopo}  |g_\tau^n|^2  $ a.s.\,, using the monotone convergence theorem, we derive that $\vertiii{g}_{{\cal S}^{2}}=\limup\vertiii{g^n}_{{\cal S}^{2}}$. 
 As the sequence $(\vertiii{g^n}_{{\cal S}^{2}})$ is bounded by $1$, we get $\vertiii{g}_{{\cal S}^{2}}\leq 1.$ 
We then adapt the arguments from the proof of the completeness of $(L^2, \|\cdot\|_{L^2})$ to show that 
 $\lim_n \vertiii{g-g^n}_{{\cal S}^{2}}=0,$ 
and that $\vertiii{\phi-\phi^{n_{l}}}_{{\cal S}^{2}}\underset{l\to\infty}{\longrightarrow}0$, which concludes the proof.
\fproof

\noindent The following result of the optimal stopping theory is used 
in the proof of Lemma \ref{f}. 
\begin{Proposition} \label{pro}
 Let $(\barY (S))$ be the family defined 
 for $S \in \stopo$ by 
\begin{eqnarray} \label{ddeux-2deux}
\overline Y  (S):= \esssup_{\tau \in \stops} E[ \xi_{\tau} + \int_S^\tau f(u)du \mid \Fc_S], 
\end{eqnarray} 

 \begin{description}
 \item[(i)]  There exists a ladlag optional process $(\barY_t)_{t\in[0,T]}$ which aggregates the family $(\barY (S))$ (i.e. $\barY_S=\barY (S),$ for all $S\in\stopo$).\\
Moreover, the process $(\barY_t + \int_0^t f(u) du)_{t\in[0,T]}$ is a strong supermartingale.
 \item[(ii)] We have 
$\barY_S=\xi_S\vee \barY_{S+}$ a.s. 
   for all $S$.
 \item[(iii)] Furthermore, $\barY _{S+}=\esssup_{\tau >S} E[ \xi_{\tau} + \int_S^\tau f(u)du \mid \Fc_S] \quad {\rm a.s.},$  for all $S$.
 \end{description} 

\end{Proposition}
\begin{Remark}\label{sautY}
It follows from $(ii)$ that 
$\Delta_+ \barY_{S}= {\bf 1}_{\{\barY_S= \xi_S\}}\Delta_+ \barY_{S}$ a.s.
\end{Remark}
\begin{proof} 
For completeness, we give here a short proof  (cf. \cite{Maingueneau} when $\xi$ is left- and right-limited, and 
\cite[Sect.B]{Kob} in the general case).  
For $S \in \stopo$, 
we define $\dbarY(S)$ by 
\begin{equation}\label{Lemma_eq_numero_ajoute}
\dbarY(S):=\barY(S)+\int_0^S f(u)du=\esssup_{\tau \in \stops} E[ \xi_{\tau} + \int_0^\tau f(u)du \mid \Fc_S].
\end{equation}
where the equality follows from the definition of $\overline Y  (S)$ (see \eqref{ddeux-2deux}).
For $S \in \stopo$, define 
\begin{equation}\label{barbarY}
\dbarY ^+(S):=\esssup_{\tau >S} E[ \xi_{\tau} + \int_0^\tau f(u)du \mid \Fc_S].
\end{equation}
By some well-known results of optimal stopping theory (cf., e.g. \cite[Prop. D.3]{KS2} 
or \cite[Prop. 1.12]{Kob}), the family of random variables 
$(\dbarY ^+(S))$ is a supermartingale family which is right-continuous along stopping times in expectation. By classical results (cf., e.g., \cite{EK} 
 or \cite[Prop. 4.1]{KQR}), there exists a process $(\dbarY ^+_t)$ which aggregates this family.
By \cite[Prop. D.3]{KS2} (cf. also \cite[Prop. 1.9]{Kob}), we have 
\begin{equation}\label{E.v}
\dbarY (S)= (\xi _S + \int_0^S f(u)du)\vee \dbarY^+(S) \quad {\rm a.s.},
\end{equation}
for all $S\in\stopo$.
It follows that the process $(\dbarY_t)$ defined by
\begin{equation}\label{E.v2}
\dbarY_t:= (\xi_t +  \int_0^t f(u)du)\vee \dbarY^+_{t}
\end{equation}
aggregates the family $(\dbarY (S))$. Since $(\dbarY (S))$ is a supermartingale family, $(\dbarY_t)$ is a strong supermartingale.
Now, we know  (cf., e.g.,  \cite[Prop. 4.14]{Kob}, combined with \cite[Appendix A1, paragraph 1]{Marc})  that $\dbarY^+(S)=\dbarY(S+)$, for all $S\in\stopo,$  where $\dbarY (S+)$ denotes the right-hand limit of $\dbarY$ along stopping times at $S$, as defined, for instance,  in \cite[Def. 4.5]{Kob}. On the other hand, we know that the process $(\dbarY_{t})$ aggregates the family $(\dbarY(S))$, which entails that the process $(\dbarY_{t+})$ aggregates the family $(\dbarY(S+))$.  By using  Eq. \eqref{E.v}, we conclude that
\begin{equation}\label{(ii)}
\dbarY_S= (\xi _S + \int_0^S f(u)du)\vee \dbarY_{S+} \quad {\rm a.s.}
\end{equation}
for all $S$. By \eqref{Lemma_eq_numero_ajoute}, we  derive $\barY(S)=\dbarY(S)-\int_0^S f(u) du=\dbarY_S-\int_0^S f(u) du$ a.s., 
for all $S\in\stopo$.
The ladlag optional  process $(\barY_t)_{t\in[0,T]}=(\dbarY_t-\int_0^t f(u) du)_{t\in[0,T]}$ thus aggregates the family $(\barY(S))_{S\in\stopo}. $  
Moreover,  $(\barY_t+\int_0^t f(u) du)= (\dbarY_t)$ is a strong supermartingale, which gives $(i)$.
By using \eqref{(ii)}, we derive $(ii)$. 
By  using \eqref{barbarY}, we obtain $(iii)$.
\end{proof}

\begin{example}[A toy example] \label{exe_intro}
Let $(\xi_t)$ be a deterministic continuous decreasing bounded function. We set $Y_t:= \sup_{s\geq t} \xi_s=\xi_t$ and $A_t:=\xi_0-\xi_t$, for all $t\in[0,T]$.  It is well-known (cf.  \cite{ElKaroui97}, or the classical Skorokhod's problem as recalled  in  \cite{KS2}) that  $(Y,0,0,A)$ is the unique solution to the RBSDE with driver $f\equiv 0$ and (continuous) obstacle $\xi$. 
Let us now change the obstacle $\xi$ at a single point $t_0\in[0,T)$. More precisely, we consider a function  $\bar\xi$ such that  $\bar\xi_t=\xi_t$, for $t\neq t_0$, and $\bar\xi_{t_0}>\xi_{t_0}$. We note that $\bar\xi$ is r.u.s.c. but not right-continuous. 
In this very simple example,  we can  compute explicitly a solution to the RBSDE (defined in Definition \ref{def_solution_RBSDE}) with parameters  $(0,\bar\xi)$. We set $\bar Y_t:=\sup_{s\geq t} \bar\xi_s,$ for $t\in[0,T].$   We first rewrite $\bar Y$ in a different manner.   For $t>t_0$, we have $\bar Y_t=\xi_t=Y_t$. 
For $t\leq t_0$, we have  $\bar Y_t=\max(\sup_{s\geq t, s\neq t_0}  \xi_s, \bar\xi_{t_0})= \max(\sup_{s\geq t}  \xi_s, \bar\xi_{t_0})=\max(\xi_t,\bar\xi_{t_0})$. We set $t_1:= \sup\{s\geq 0: \xi_s\geq \bar\xi_{t_0}\}$, with the convention $\sup(\varnothing)=0$. We note  that $\bar Y_{t}=\bar\xi_{t_0}$, for $t\in[t_1,t_0],$ and $\bar Y_{t}=\xi_{t}=Y_t$, for $t\in[0,t_1)$.  We define $\bar C_t:=  (\bar\xi_{t_0}-\xi_{t_0}){\bf 1}_{t\geq t_0}$, for $t\in[0,T]$. We see that $\bar C$ is non-decreasing, cad-lag, purely discontinuous (in fact, $\bar C$ has one single jump), and it  satisfies the minimality condition \eqref{RBSDE_C}. We  now consider the following  two cases: $(i)$ the case $\bar\xi_{t_0}\geq\xi_{t_0}$, and $(ii)$ the case $\bar\xi_{t_0}<\xi_{t_0}.$  In the case $(i)$, we have  $t_1=0$;  we set  $\bar A_t:=0$,  for $t\in[0,t_0]$, and 
$\bar A_t:=\xi_{t_0}-\xi_t$, for $t\in(t_0,T]$. In the case $(ii)$, we have $t_1>0$ and $\xi_{t_1}=\bar\xi_{t_0} $; 
we define $\bar A_t$ by $\bar A_t:=\xi_0-\xi_t$, for $t\in[0,t_1)$, $\bar A_t:=\xi_0-\xi_{t_1}$, for $t\in[t_1,t_0]$, and $\bar A_t:=\xi_0-\xi_{t_1}+\xi_{t_0}-\xi_t$, for $t\in(t_0,T]$. 
In both cases, the function $\bar A$ is non-decreasing, continuous, and  it  satisfies the minimality condition \eqref{RBSDE_A}.  Moreover, it can be easily checked that,  $\bar Y_t=\bar \xi_T+ \bar A_T-\bar A_t+\bar C_{T-}-\bar C_{t-}$, for all $t\in[0,T]$. We conclude that $(\bar Y, 0, 0, \bar A, \bar C)$ is a solution to the reflected BSDE with parameters $(0,\bar \xi)$. We prove in Lemma \ref{f} that $(\bar Y, 0, 0, \bar A, \bar C)$ is the unique solution.   We notice that  $\bar Y$ has a jump on the right at $t_0$;  the  size $\Delta_+ \bar Y_{t_0}$ of the jump satisfies  $\Delta_+ \bar Y_{t_0}:= \bar Y_{t_0+}-\bar Y_{t_0}=\xi_{t_0}-\bar\xi_{t_0}=-(C_{t_0}-C_{t_{0-}}).$   
   
\end{example}
\begin{example}[Counter-example]\label{exe} 
Let $\nu(du) := \delta_1(du)$, where $\delta_1$ denotes the Dirac measure at $1$. 
The process $N_t := N([0,t] \times \{1\})$ is then a Poisson process with parameter $1$, and we have
$\tN_t := \tN([0,t] \times \{1\})  = N_t - t.$ 
Let the driver $f$ be given by
$
f(t,y,z,\ell): = \langle -1, \ell \rangle_\nu=- \ell (1) .
$
 We introduce  the associated adjoint process $\Gamma_{t,.} $, defined for each $r\in [t,T]$ by
$
\Gamma_{t,r}   
= {\bf 1}_{\{N_r - N_t=0\}}e^{  r-t}.
$
Let the pay-off process $\xi$ be given by
$\xi_t:= {\bf 1}_{  \{N_t \geq 1\}} e^{-t}, \text{ for all } t\in[0,T].$ Note that $\xi$ is adapted and right-continuous. 
By the representation property for linear BSDEs with jumps (\cite[Thm.3.4]{QuenSul}) and  classical computations, we get
 $${\cal E}^f_{t, \tau}(\xi_{\tau}) = E[ \Gamma_{t, \tau}\xi_{\tau} \, | \, {\cal F}_t]=  \e^{-t}E[{\bf 1}_{  \{N_\tau-N_t=0\}}{\bf 1}_{  \{N_\tau \geq 1\}} \, | \, {\cal F}_t]=\e^{-t}{\bf 1}_{ \{N_t \geq 1\}}E[{\bf 1}_{  \{N_\tau-N_t=0\}}\, | \, {\cal F}_t],$$
 for all $t\in[0,T]$, for all $\tau\in\stopt$.
We deduce that
  $Y_t: =   {\rm ess} \sup_{\tau \in \T_{t,T}}{\cal E}^f_{t, \tau}(\xi_{\tau})=\e^{-t}{\bf 1}_{ \{N_t \geq 1\}}=\xi_t,$ for all $t \in [0,T]$ (as $E[{\bf 1}_{  \{N_\tau-N_t=0\}}\, | \, {\cal F}_t]\leq 1$ and the upper bound is attained for $\tau=t$).  Let us focus on the optimal stopping problem at time $t=0$. 
The above computations imply that,  for $t=0$,  
$Y_0:={\rm ess} \sup_{\tau \in \T_{0,T}}{\cal E}^f_{0, \tau}(\xi_{\tau})=\xi_0=0$. Moreover,  the essential supremum (at time $0$) is attained at any stopping time $\tau\in\stopo$ (indeed,     ${\cal E}^f_{0, \tau}(\xi_{\tau})=0$, for all $\tau\in\stopo$).  This is true, in particular, for the stopping time $\hat{\tau}$ defined by $\hat{\tau}:=T.$ However, we will see that the process $Y$ (computed above) is not an $\mathcal{E}^f$-martingale on $[0,\hat{\tau}]$. To do so, let us denote by $X$ the (first component) of the solution to the BSDE with driver $f$, terminal time $T$, and terminal condition $Y_T=\xi_T$.  For $u\in[0,\hat{\tau}]=[0,T]$, we have 
$X_u={\cal E}^f_{u,T}(\xi_{T})=\e^{-u}{\bf 1}_{ \{N_u \geq 1\}}E[{\bf 1}_{  \{N_T-N_u=0\}}\, | \, {\cal F}_u]=
\e^{-u}{\bf 1}_{ \{N_u \geq 1\}}P[ N_T-N_u=0]=\e^{-T}{\bf 1}_{ \{N_u \geq 1\}}.$
Hence, for $u\in(0,T)$,  we have $Y_u=\e^{-u}>\e^{-T}=X_u$ on the set $ \{N_u \geq 1\}$.   Hence, the processes  $X$ and $Y$
 are not indistinguishable. \\
 Let us also note that in this example ${\cal E}^f$ is not strictly monotonous. To see this, we consider  
 $\xi^1:=0$ and $\xi^2:=\xi_T=\e^{-T}{\bf 1}_{ \{N_T \geq 1\}}$.  We have $\xi^1\leq \xi^2$ and
 ${\cal E}^f_{0,T}(\xi^1)={\cal E}^f_{0,T}(0)=0={\cal E}^f_{0,T}(\xi_T)={\cal E}^f_{0,T}(\xi^2)$. However, $\xi_T\neq 0$ with a positive  probability.  
\end{example}

\begin{example}\label{toy2}
  Let $(\xi_t)$ be an RCLL deterministic 
 bounded function, increasing on $[0, t_0[$, decreasing on $[t_0, T]$, and supposed to be continuous on $[0,T]$ except at  $t_0 \in ]0, T[$ with 
 $\xi_{t_0} < \xi_{t_0{-}}$. Note that the function $\xi$ is  not l.u.s.c. at time $t_0$.
 We set $Y_t:= \sup_{s\geq t} \xi_s$ and $A_t:=Y_0-Y_t$, for all $t\in[0,T]$.  By the classical 
 Skorokhod's problem (cf. also \cite{ElKaroui97}), $(Y,0,0,A)$ is the unique solution to the RBSDE with driver $f\equiv 0$ and obstacle $\xi$. 
  We have $Y_t =\xi_{t_0{-}}$, if $t< t_0$, and $Y_t = \xi_{t}$, if $t\geq t_0$. 
 Let  $\tau^*_0 := \inf \{u \geq 0 \colon Y_u = \xi_u \}$. We have $\tau^*_0=t_0$. Note that here $\Delta A_{\tau^*_0}=\Delta A_{t_0}=\xi_{t_0-}-\xi_{t_0}>0$.  
 However, $\tau^*_0=t_0$ is not optimal for 
 $Y_0= \sup_{s\geq 0} \xi_s= \xi_{t_0{-}}$ because $\xi_{t_0} < \xi_{t_0{-}}$. 
 In fact, there does not exist an optimal stopping time for $Y_0$.
 
 Let us now consider the case  where,  
  instead of being decreasing on $[t_0, T]$, the function $\xi$ is increasing on $[t_0, T]$ with 
 $\xi_T= \xi_{t_0{-}}$. Note that, again,  the function $\xi$ is not l.u.s.c. For each $t \in [0,T]$, $Y_t= \sup_{s\geq t} \xi_s=  \xi_{t_0{-}}$. The process $A$ is  constant equal to $0$, and $\tau^*_0=T$ is optimal for 
 $Y_0$ (and also for $Y_t$, for all $t \in [0,T]$).  
 
\end{example}

\end{document}